\newtheorem{theorem}{Theorem}[section]
\newtheorem{proposition}[theorem]{Proposition}
\newtheorem{lemma}[theorem]{Lemma}
\newtheorem{corollary}[theorem]{Corollary}
\theoremstyle{definition}
\newtheorem{definition}[theorem]{Definition}
\newtheorem{example}[theorem]{Example}
\newtheorem{remark}[theorem]{Remark}
\newtheorem{assumption}[theorem]{Assumption}
\newcommand{\F}{\mathcal F}
\newcommand{\G}{\mathcal G}
\newcommand{\AW}{\mathcal A\mathcal W}
\newcommand{\Law}{\mathrm{Law}}
\newcommand{\sign}{\mathrm{sign}}
\newcommand{\W}{\mathcal W}
\newcommand{\N}{\mathbb N}
\newcommand{\R}{\mathbb R}
\newcommand{\A}{\mathcal A}
\newcommand{\B}{\mathcal B}
\newcommand{\Pc}{\mathcal P}
\newcommand{\Fc}{\mathcal{F}}
\newcommand{\cpl}{\mathrm{Cpl}}
\newcommand{\cplc}{\mathrm{Cpl}_{\mathrm{c}}}
\newcommand{\cplba}{\cpl_{\mathrm{bc}}}
\newcommand{\cplbc}{\mathrm{Cpl}_{\mathrm{bc}}}
\newcommand{\kr}{\mathrm{KR}}
\newcommand{\antitone}{\mathrm{AT}}
\newcommand{\sync}{\mathrm{sync}}
\newcommand{\async}{\mathrm{async}}
\newcommand{\CW}{\mathcal C\mathcal W}
\newcommand{\SCW}{\mathcal S\mathcal C\mathcal W}
\newcommand{\SW}{\mathcal{SW}}
\newcommand{\D}{\mathop{}\!\mathrm{d}}
\newcommand{\dx}{\mathop{}\!\mathrm{d}x}
\newcommand{\dy}{\mathop{}\!\mathrm{d}y}
\newcommand{\dt}{\mathop{}\!\mathrm{d}t}
\newcommand{\di}{\mathop{}\!\mathrm{d}}
\newcommand{\ind}[1]{\mathds{1}_{\!\left\{{#1}\right\}}}
\def\P{{\mathbb P}}
\def\E{{\mathbb E}}
\def\Q{{\mathbb Q}}
\def\R{{\mathbb R}}
\def\fcmp{\mathbin{\raise 0.6ex\hbox{\oalign{\hfil$\scriptscriptstyle \mathrm{o}$\hfil\cr\hfil$\scriptscriptstyle\mathrm{9}$\hfil}}}}
\pgfplotsset{compat=newest,
    width=6cm,
    height=3cm,
    scale only axis=true,
    max space between ticks=25pt,
    try min ticks=5,
    every axis/.style={
        axis y line=left,
        axis x line=bottom,
        axis line style={line width = 0.3pt,-,>=latex, shorten >=-.4cm}
    },
    every axis plot/.append style={thick},
    tick style={black, thin}
}
\tikzset{
    semithick/.style={line width=0.5pt},
}
\newcommand{\incfig}[1]{%
    \def\svgwidth{0.75\textwidth}
\begingroup%
  \makeatletter%
  \providecommand\color[2][]{%
    \errmessage{(Inkscape) Color is used for the text in Inkscape, but the package 'color.sty' is not loaded}%
    \renewcommand\color[2][]{}%
  }%
  \providecommand\transparent[1]{%
    \errmessage{(Inkscape) Transparency is used (non-zero) for the text in Inkscape, but the package 'transparent.sty' is not loaded}%
    \renewcommand\transparent[1]{}%
  }%
  \providecommand\rotatebox[2]{#2}%
  \newcommand*\fsize{\dimexpr\f@size pt\relax}%
  \newcommand*\lineheight[1]{\fontsize{\fsize}{#1\fsize}\selectfont}%
  \ifx\svgwidth\undefined%
    \setlength{\unitlength}{565.85126261bp}%
    \ifx\svgscale\undefined%
      \relax%
    \else%
      \setlength{\unitlength}{\unitlength * \real{\svgscale}}%
    \fi%
  \else%
    \setlength{\unitlength}{\svgwidth}%
  \fi%
  \global\let\svgwidth\undefined%
  \global\let\svgscale\undefined%
  \makeatother%
  \begin{picture}(1,0.51656603)%
    \lineheight{1}%
    \setlength\tabcolsep{0pt}%
    \put(0,0){\includegraphics[width=\unitlength,page=1]{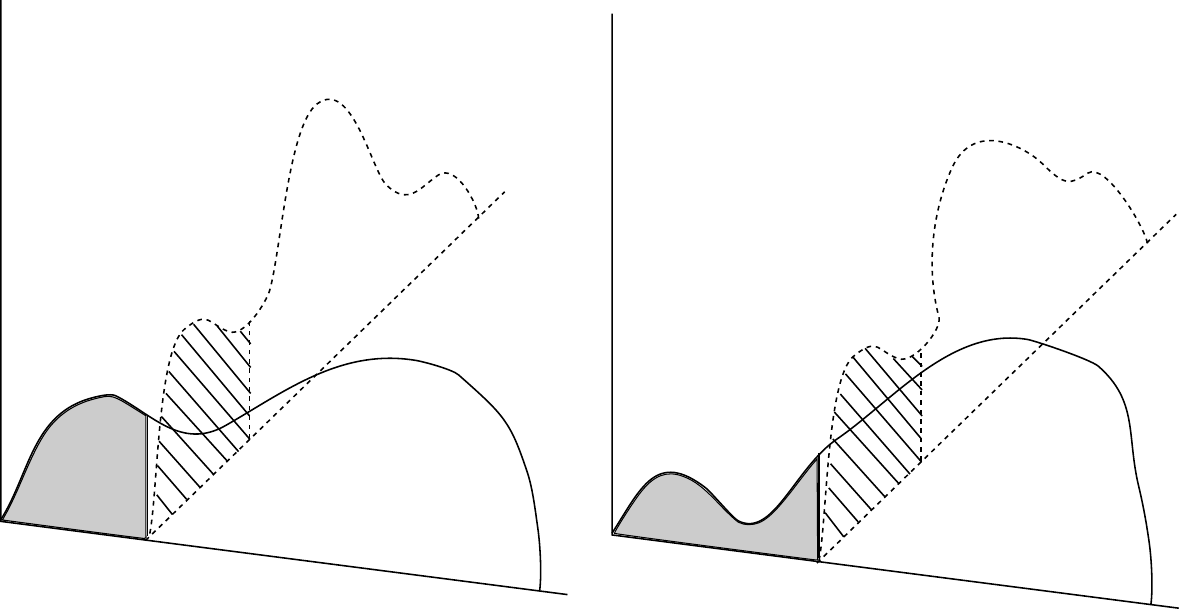}}%
    \put(0.04033325,0.18804548){\color[rgb]{0,0,0}\makebox(0,0)[lt]{\lineheight{1.25}\smash{\begin{tabular}[t]{l}$\mu_1$\end{tabular}}}}%
    \put(0.1162347,0.25335455){\color[rgb]{0,0,0}\makebox(0,0)[lt]{\lineheight{1.25}\smash{\begin{tabular}[t]{l}$\mu_{x_1}$\end{tabular}}}}%
    \put(0.52580089,0.12361979){\color[rgb]{0,0,0}\makebox(0,0)[lt]{\lineheight{1.25}\smash{\begin{tabular}[t]{l}$\nu_1$\end{tabular}}}}%
    \put(0.69082722,0.22830631){\color[rgb]{0,0,0}\makebox(0,0)[lt]{\lineheight{1.25}\smash{\begin{tabular}[t]{l}$\nu_{y_1}$\end{tabular}}}}%
    \put(0.10406878,0.03897666){\color[rgb]{0,0,0}\makebox(0,0)[lt]{\lineheight{1.25}\smash{\begin{tabular}[t]{l}$x_1$\end{tabular}}}}%
    \put(0.21422044,0.13089705){\color[rgb]{0,0,0}\makebox(0,0)[lt]{\lineheight{1.25}\smash{\begin{tabular}[t]{l}$x_2$\end{tabular}}}}%
    \put(0.67298128,0.02090569){\color[rgb]{0,0,0}\makebox(0,0)[lt]{\lineheight{1.25}\smash{\begin{tabular}[t]{l}$y_1$\end{tabular}}}}%
    \put(0.78020259,0.10744647){\color[rgb]{0,0,0}\makebox(0,0)[lt]{\lineheight{1.25}\smash{\begin{tabular}[t]{l}$y_2$\end{tabular}}}}%
  \end{picture}%
\endgroup%

}
\numberwithin{equation}{section}
\author{Julio Backhoff-Veraguas$^1$}
\address{$^1$Faculty of Mathematics, University of Vienna, Austria}
\email{julio.backhoff@univie.ac.at}
\author{Sigrid K\"allblad$^2$}
\address{$^2$Department of Mathematics, KTH Royal Institute of Technology, Sweden}
\email{sigrid.kallblad@math.kth.se}
\author{Benjamin A.\ Robinson$^3$}
\address{$^3$Department of Statistics, University of Klagenfurt, Austria}
\email{benjamin.robinson@aau.at}
\thanks{$^2$Financial support from the Swedish Research Council (VR) under grant 2020-03449 is gratefully acknowledged.\\$^3$This research was funded in part by the Austrian Science Fund (FWF) [10.55776/Y782], [10.55776/P35519], [10.55776/P34743]. For open access purposes, the author has applied a CC BY public copyright license to any author accepted manuscript version arising from this submission.\\
The authors thank the anonymous referees for their comments, which helped to improve this manuscript.}
\title[Adapted Wasserstein distance for SDEs]{Adapted Wasserstein distance between the laws of SDEs}
\keywords{Knothe--Rosenblatt rearrangement, adapted Wasserstein distance, bicausal optimal transport, stochastic differential equations, optimal couplings}
\date{\today}
\begin{document}

\begin{abstract}
	We consider the bicausal optimal transport problem between the laws of scalar time-homogeneous stochastic differential equations, and we establish the optimality of the synchronous coupling between these laws. The proof of this result is based on time-discretisation and reveals a novel connection between the synchronous coupling and the celebrated discrete-time Knothe--Rosenblatt rearrangement. We also prove a result on equality of topologies restricted to a certain subset of laws of continuous-time processes. We complement our main results with examples showing how the optimal coupling may change in path-dependent and multidimensional settings.
\end{abstract}

\maketitle

\section{Introduction}
For all their merits, the concepts of weak convergence and Wasserstein distances have proven to be insufficient for applications involving stochastic processes where filtrations and the flow of information play a pivotal role. For instance, neither usual stochastic optimisation problems (such as optimal stopping or utility maximisation) nor Doob--Meyer decompositions behave continuously with respect to these topologies. 
Over the last decades, several approaches have been proposed to overcome these shortcomings; in this paper we focus on one such notion, namely the so-called adapted Wasserstein distance.

More precisely, we study the adapted Wasserstein distance between the 
laws of solutions of one-dimensional stochastic differential equations (SDEs) when the space of continuous functions is equipped with the $L^p$-metric. 
We address this problem by embedding it into a class of bicausal optimal transport problems. 
Imposing fairly general conditions on 
the coefficients of the SDEs, typically amounting to time-homogeneity and mild regularity assumptions, our contribution can be summarised as follows:
\begin{itemize}
\item [(i)] characterisation of the optimal coupling, the so-called synchronous coupling, for a class of bicausal optimal transport problems, which notably includes the adapted Wasserstein distance;
\item[(ii)]  a time-discretisation method allowing us to derive most continuous-time statements from their more elementary discrete-time counterparts; 
\item[(iii)] a stability result for optimisers of bicausal optimal transport problems;
\item[(iv)] a result stating that the topology induced by the adapted Wasserstein distance coincides with several topologies (including the weak topology) when restricting to SDEs whose coefficients belong to equicontinuous families;
\item[(v)] examples illustrating the sub-optimality of the synchronous coupling for path-dependent SDEs and in higher dimensions. 
\end{itemize}

A further significant contribution is the connection of two hitherto unrelated objects: the \emph{synchronous coupling} of SDEs, which is the coupling that arises when a single Wiener process drives two SDEs; and the \emph{Knothe--Rosenblatt} rearrangement, which is a celebrated discrete-time adapted coupling that preserves the lexicographical order. One key result that we establish is an optimality property of the Knothe--Rosenblatt rearrangement. 
We then argue that in a certain sense, the synchronous coupling is the continuous-time counterpart of the Knothe--Rosenblatt rearrangement.

The pioneering work of \citet{BiTa19} contains a similar statement to our contribution (i) above; they use PDE techniques to prove optimality of the synchronous coupling for the problem of optimally controlling the correlation between one-dimensional SDEs with smooth coefficients. 
Our first main result establishes that, for general cost functions and path-dependent SDEs, bicausal optimal transport problems admit such a control reformulation.\footnote{Note added in revision: See also~\citet{CoLi24} for an extension of this result to higher dimensions.}  
A posteriori, it is thus clear that \cite{BiTa19} establishes (i) for the adapted $2$-Wasserstein distance and smooth coefficients. 
Using probabilistic methods, we generalise the optimality result of \cite{BiTa19} to more general cost functions and SDEs.

\subsection*{Adapted Wasserstein distance}

		We now define the adapted Wasserstein distance and give some motivation for its introduction. Let $\Omega=C([0,1],\R)$ be the space of continuous paths from $[0, 1]$ into $\R$ endowed with the uniform topology and corresponding Borel $\sigma$-field $\mathcal B(\Omega)$, and endow the product space $\Omega\times\Omega$ with the corresponding product $\sigma$-field. Write $\omega$ and $\bar{\omega}$ for the first and second components of the canonical process on $\Omega\times\Omega$.
		For any two probability measures $\mu,\nu$ on $\Omega$, the set of \emph{couplings} between $\mu$ and $\nu$, $\cpl(\mu,\nu)$, consists of all probability measures $\pi$ on $\Omega\times\Omega$ with \emph{marginals} $\mu$, $\nu$; that is, $\pi(B,\Omega)=\mu(B)$ and $\pi(\Omega,B)=\nu(B)$, for all $B\in\mathcal{B}(\Omega)$.
		For $p\ge 1$, we write $\Pc_p$ for the set of probability measures $\mu$ on $\Omega$ with finite $p$\textsuperscript{th} moments with respect to the $L^p$-norm on $\Omega$; i.e.~$\int_{\Omega}\int_0^1|\omega_t|^p \D t \D\mu < \infty$. 
		
		The classical $p$-Wasserstein distance $\W_p$ on $\Pc_p$ with respect to the $L^p$-norm on the underlying space $\Omega$ then takes the following form (see, for example, Villani \cite[Definition 6.1]{Vi09}): 
\begin{equation}\label{eq:wasserstein_Lp}
	\W_p^p(\mu,\nu) \coloneqq \inf_{\pi\in\cpl(\mu,\nu)}\E^\pi\!\left[\int_0^1\!\left|\omega_t-\bar{\omega}_t\right|\!^p\dt\right],
	\qquad\mu,\nu\in\Pc_p.
\end{equation}
	This distance notably fails to take the flow of information into account. For example, the values of optimisation problems for continuous-time stochastic processes may not be continuous in Wasserstein distance with respect to the reference measure; see \Cref{ex:motivation}.	
	
	As a remedy, the adapted Wasserstein distance is defined by restricting to couplings that respect the asymmetric flow of information originating from the processes. 
	Following \cite{BaBaBeEd19a}, we define bicausal couplings as follows: Let $\Fc$ and $\bar \Fc$ be the natural filtrations of $\omega$ and $\bar \omega$, respectively. For any $t \in [0, 1]$ and any probability measure $\mu$ on $\Omega$, write $\Fc_t^\mu$ for the completion\footnote{I.e.\ $\F^\mu_t$ is the sigma-algebra generated by $\F_t$ and the null sets for $\mu$.} of $\Fc_t$ under $\mu$. For any $\pi\in\cpl(\mu,\nu)$, let $\pi_\omega(\di\bar\omega)$ be the regular disintegration kernel for which $\pi(\di\omega,\di\bar\omega)=\mu(\di\omega)\pi_\omega(\di\bar\omega)$.
	
\begin{definition}[bicausal couplings]\label{def:bicausal_couplings}
The set of \emph{causal couplings} $\cplc(\mu,\nu)$ consists of all $\pi\in\cpl(\mu,\nu)$ such that, for each $t\in [0, 1]$ and $A\in\bar\Fc_t$, 
\begin{equation}
	\textrm{$\omega\mapsto \pi_\omega(A)$
	 is $\Fc_t^\mu$-measurable}.
\end{equation}
The set of \emph{bicausal couplings} $\cplbc(\mu,\nu)$ consists of all $\pi\in\cplc(\mu,\nu)$ with $S_\#\pi\in\cplc(\nu,\mu)$, where $S(\omega,\bar\omega)=(\bar\omega,\omega)$, for all $(\omega,\bar\omega)\in\Omega\times\Omega$.
\end{definition}

Put into words: `one cannot look into the future when deciding where to allocate mass at a given time'. This emphasises the role played by the flow of information; i.e.\ filtrations.\footnote{The intuition behind the concept of causality is perhaps most easily grasped in a discrete-time setup, i.e.\ when considering a finite set of time points, say $\{1,2,\dots,N\}$. The defining property of causality can then be phrased as requiring, with obvious adaptation of notation, that $\pi((\bar\omega_1,...,\bar\omega_n)\in A|\omega_1,...,\omega_N)=\pi((\bar\omega_1,...,\bar\omega_n)\in A|\omega_1,...,\omega_n)$, for all $A\in\mathcal{B}(\R^n)$, $n=1,...,N$. In such a discrete-time setting, if the coupling is further supported on the graph of a function, say $\varphi \colon \R^N\to\R^N$ (i.e.\ a Monge map), then causality amounts to $\varphi(x_1,...,x_N)=(\varphi_1(x_1),\varphi_2(x_1,x_2),\dots,\varphi_N(x_1,\dots,x_N))$, for some functions $\varphi_n \colon \R^n\to\R$, $n=1,...,N$.}
	With the above notation at hand, we are now ready to define the \emph{adapted Wasserstein distance} $\AW_p$, $p\ge 1$:
\begin{equation}\label{eq:adapted_wasserstein}
	\AW_p^p(\mu,\nu) \coloneqq \inf_{\pi\in\cplbc(\mu,\nu)}\E^\pi\!\left[\int_0^1\!\left|\omega_t-\bar{\omega}_t\right|\!^p\dt\right],
	\qquad\mu,\nu\in\Pc_p.
\end{equation}

	To give an historical account, the condition of causality can be traced back, at least, to the work on existence of solutions of SDEs by \citet{YW}; it has also appeared under the name of \emph{compatibility} in \citet{Ku07}. The concept was recently popularised and studied in a continuous-time framework by \citet{La18}, and systematically investigated for discrete-time processes using dynamic programming arguments in \cite{BaBeLiZa17} (see also \cite{Ka17} for a recursive approach to a closely related optimal stopping problem).
	We refer to \citet{BeLa20} for further historical remarks and for an account of the connections to the filtering literature.
	We also refer to \citet{BeLa20,BePaSc21c} for a detailed exposition of how Monge maps relate to general transport plans in the presence of causality constraints. 

	To the best of our knowledge, the symmetric condition of bicausality first appeared in the work of \citet{Ru85}; for a more recent account we refer again to \cite{BaBeLiZa17}. A distance based on the bicausality condition was independently introduced and studied, under the name of \emph{nested distance}, in a series of papers by Pflug and Pichler; see, for example, \cite{PfPi12} and \cite{PfPi14} and the references therein. 
	The concept of causality aside, numerous alternative approaches to incorporating the flow of information into process distances can be found in the literature. Most notably, albeit in different ways, the seminal works of \citet{Al81} and \citet{He96} both rely on incorporating the distance between certain conditional disintegration kernels of the processes. See also \cite{BaBaBeEd19b}, where it was shown that these different distances all generate the same topology. 
	
	In continuous time, for diffusion processes, the modified Wasserstein distance was introduced in \cite{BiTa19}; we show herein that this distance coincides with the adapted Wasserstein distance.
	For general continuous semi-martingales, motivated by financial applications, an adapted Wasserstein distance was defined in \cite{BaBaBeEd19a} with respect to a cost function that compares the drift and martingale parts of the Doob--Meyer decomposition separately.
	We refer to \cite{AcBaCa18,AcBaZa20} for further studies of adapted distances in continuous time; see also \cite[Section~2]{BaBeLiZa17} for an exposition of the related literature within mathematical finance.

\subsection*{Optimality of the synchronous coupling}

	Throughout this article, we consider the laws of solutions of SDEs of the following type: 
\begin{equation}\label{eq:sde}
	\di X^{b,\sigma}_t=b(X^{b,\sigma}_t)\dt+\sigma(X^{b,\sigma}_t)\di W_t,
	\quad X^{b,\sigma}_0=x_0,
	\quad t \in [0, 1],
\end{equation}
where $b \colon \R\to\R$ and $\sigma \colon \R\to\R_+ = [0, \infty)$ are some measurable functions such that a unique strong solution $X^{b, \sigma}$ exists; we write $\mu^{b,\sigma} \coloneqq  \Law(X^{b, \sigma})$ for the induced probability measure on $\Omega$. We suppose that all SDEs are equipped with the same initial condition, $x_0\in\R$, and omit it from the notation.
	
	Given two such measures, $\mu^{b,\sigma},\mu^{\bar b,\bar\sigma}$, we can couple them as follows: Let $(\Omega,\Fc,\P)$ be a probability space supporting a Wiener process $W$, and let $X^{b,\sigma},X^{\bar b,\bar\sigma}$ be the solutions of \eqref{eq:sde} with coefficients $(b, \sigma)$ and $(\bar b, \bar \sigma)$, respectively, when both SDEs are driven by $W$. In this way, we define a bicausal coupling $\P\circ (X^{b,\sigma},X^{\bar b,\bar\sigma})^{-1} \in \cplba(\mu^{b, \sigma}, \mu^{\bar b, \bar \sigma})$. This coupling plays a pivotal role throughout the article and we name it the \emph{synchronous coupling}. 
	
	Our first main result establishes general conditions under which this coupling is optimal. 
	
	\begin{assumption}\label{ass:main}
		The coefficients $b\colon \R \to \R$ and $\sigma\colon \R \to \R_+$ in \eqref{eq:sde} are continuous, have linear growth, and are such that pathwise uniqueness holds for \eqref{eq:sde}.
	\end{assumption}

	\begin{theorem}\label{thm:synchronous_optimal}
		Suppose that $(b, \sigma)$ and $(\bar b, \bar \sigma)$ satisfy \Cref{ass:main}. Then, for any $p\ge 1$, the synchronous coupling attains the infimum in \eqref{eq:adapted_wasserstein} defining $\AW_p(\mu^{b,\sigma},\mu^{\bar b,\bar\sigma})$. 
	\end{theorem}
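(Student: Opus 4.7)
The plan is to time-discretise and reduce the claim to a one-dimensional statement about bi-causal couplings of Markov chains. Fix a uniform grid $0=t_0<\cdots<t_n=1$ and let $X^n,\bar X^n$ denote the Euler--Maruyama approximations of \eqref{eq:sde} with coefficients $(b,\sigma)$ and $(\bar b,\bar\sigma)$ driven by the \emph{same} Brownian motion $W$; this is the natural discrete analogue of the synchronous coupling. Since $\sigma,\bar\sigma\ge 0$, for each fixed past the map from a Brownian increment to the next value of either scheme is non-decreasing. Consequently, conditionally on the past, the joint one-step law of $(X^n_{t_{k+1}},\bar X^n_{t_{k+1}})$ is the monotone (Hoeffding--Fr\'echet) coupling of its own marginals. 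This identifies the discrete synchronous coupling with the \emph{Knothe--Rosenblatt} rearrangement between $\Law(X^n)$ and $\Law(\bar X^n)$ on the grid, making the conceptual link announced in the introduction precise at the discrete level.

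I would then invoke the (essentially one-dimensional) fact that among all bi-causal couplings of two laws of $\R$-valued discrete-time processes, the Knothe--Rosenblatt rearrangement minimises the $p$-th power Wasserstein cost. This is proved by dynamic programming backwards in time: once the past has been fixed, one is left with a classical transport problem between one-step conditionals on $\R$, which is solved by the monotone coupling. Combined with the previous paragraph, this shows that the \emph{discrete} synchronous coupling is optimal for the bi-causal problem between $\Law(X^n)$ and $\Law(\bar X^n)$, and in particular
\begin{equation*}
\AW_p^p\bigl(\Law(X^n),\Law(\bar X^n)\bigr)=\E\!\left[\sum_{k=0}^{n-1}(t_{k+1}-t_k)\,|X^n_{t_k}-\bar X^n_{t_k}|^p\right].
\end{equation*}

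To conclude, I would pass to the limit $n\to\infty$. Under \Cref{ass:main} the Euler schemes converge in law to the SDE solutions on $\Omega$ and, thanks to linear growth and uniform $L^p$-estimates, also in $L^p(\dt\otimes\P)$; hence the synchronous cost on the grid converges to $\E[\int_0^1|X^{b,\sigma}_t-X^{\bar b,\bar\sigma}_t|^p\dt]$. For the reverse inequality, given an arbitrary $\pi\in\cplbc(\mu^{b,\sigma},\mu^{\bar b,\bar\sigma})$ one needs to produce bi-causal couplings $\pi^n\in\cplbc(\Law(X^n),\Law(\bar X^n))$ whose costs converge to that of $\pi$; applying the discrete optimality to each $\pi^n$ then transfers the inequality to the limit and forces the synchronous coupling to be optimal.

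The genuinely hard step is this last discretisation of bi-causal couplings. Since \Cref{ass:main} provides no uniform strong-error bound for the Euler scheme and no Lipschitz control on $b,\bar b,\sigma,\bar\sigma$, one cannot simply project $\pi$ onto the time grid; bi-causality must be preserved while the marginals are replaced by the Euler laws. I expect to handle this by combining the time-discretisation framework announced as contribution (ii) with the equivalence of topologies on equicontinuous families of SDE laws from contribution (iii), which together should force both sides of the inequality to coincide in the limit and thereby establish the optimality of the synchronous coupling.
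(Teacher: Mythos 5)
Your overall strategy (discretise, identify the discrete synchronous coupling with the Knothe--Rosenblatt rearrangement, invoke discrete bi-causal optimality, pass to the limit) is the same as the paper's, but two of your steps contain genuine gaps. First, the discrete optimality you invoke is not an unconditional fact: the Knothe--Rosenblatt rearrangement is optimal among bi-causal couplings only under a monotonicity hypothesis on the marginals, namely that the conditional kernels $x_k \mapsto F_{\mu_{x_1,\dotsc,x_k}}(u)$ are co-monotone (e.g.\ both laws increasing in first order stochastic dominance); see \Cref{prop:kr-discrete-optimality}. Your backward-induction sketch fails without this, because the continuation value depends on both current states, so the one-step problem is not a plain monotone-transport problem. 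Moreover, the standard Euler--Maruyama scheme you propose does \emph{not} satisfy this monotonicity when $\sigma$ is non-constant: the one-step map $x \mapsto x + h b(x) + \sigma(x)\Delta W$ is increasing in the increment (which does give your Knothe--Rosenblatt identification of the joint law), but it need not be increasing in $x$ because the Gaussian increment is unbounded (cf.\ \Cref{rem:em}). This is precisely why the paper replaces it by the monotone Euler--Maruyama scheme with increments truncated at level $A_h=4\sqrt{-h\log h}$ (\Cref{def:monotone-em}, \Cref{lem:truncated-increasing-kernels}), which restores first-order stochastic dominance while keeping the truncation error negligible.

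Second, the step you yourself flag as hard --- producing bi-causal couplings of the discrete laws whose costs approximate that of an arbitrary $\pi\in\cplbc(\mu^{b,\sigma},\mu^{\bar b,\bar\sigma})$ --- is left unresolved, and your proposed fix does not work: the topology-equivalence result (\Cref{thm:topologies}) concerns a family with a \emph{fixed} Lipschitz constant $\Lambda$, whereas \Cref{ass:main} only gives continuity and linear growth, so it cannot be applied here. The paper's solution has two ingredients you are missing. One is \Cref{prop:bion-nadal-talay-bc}: every bi-causal coupling between laws of such SDEs is the joint law of strong solutions driven by a correlated Wiener process; discretising both SDEs with the monotone scheme driven by those same correlated noises then yields a bi-causal coupling of the discrete laws whose cost converges to that of $\pi$ (this is how \Cref{prop:bnt} is proved under Lipschitz assumptions). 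The other is that the general case of \Cref{ass:main} is not obtained by discretisation at all, but by a separate stability theorem for systems driven by correlated Wiener processes (\Cref{prop:stab_abstract}, proved via Skorokhod representation and a martingale-problem argument with a Lusin/Tietze regularisation of the correlation $\rho$), which transfers optimality from locally uniform Lipschitz approximations of the coefficients to the limit. Without an analogue of these two ingredients, your argument does not close.
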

	
	Our main result notably establishes the optimality of the synchronous coupling not only for the adapted Wasserstein distance, but also for the \emph{bicausal optimal transport} problem with respect to a more general class of cost functions (see \Cref{thm:optimality_general_c}); we also provide this conclusion under a different set of assumptions allowing for the drift coefficients to be discontinuous (see \Cref{prop:zvonkin}). The follow-up work \cite{RoSz24} further extends the main result in the direction of SDEs with irregular coefficients.
 
 	For one-dimensional SDEs with sufficiently regular coefficients, the optimality of the synchronous coupling was first established in \cite{BiTa19} for the so-called modified Wasserstein distance, which is the distance obtained when optimising the cost in \eqref{eq:adapted_wasserstein} over couplings induced by solutions of a pair of SDEs \eqref{eq:sde} driven by \emph{correlated} Wiener processes. 
 	It then follows from the above result combined with \cite[Section 2.1]{BiTa19} that the adapted and modified Wasserstein distances coincide.
 	A crucial part of our analysis is the a priori reformulation of the adapted Wasserstein distance in terms of an associated control problem (where one controls the degree of correlation); a similar result holds for a bicausal optimal transport problem with a general cost function when the marginals are given by possibly path-dependent SDEs (see \Cref{prop:bion-nadal-talay-bc}). 
	 	
 	In \cite{BiTa19}, the authors take a stochastic control approach and their proofs rely on a verification argument for the associated Hamilton--Jacobi--Bellman (HJB) equation. 
 	While such stochastic control arguments provide the natural continuous-time analogue of the recursive arguments used to prove optimality of the Knothe--Rosenblatt rearrangement in discrete time (see \Cref{rem:kr-dpp-proof}), the use of classical solutions of the HJB equation, as employed in \cite{BiTa19}, inevitably requires the cost function as well as the coefficients of the SDEs to be smooth enough for the associated stochastic flows to be differentiable. 
	Here, we rather take a probabilistic approach to prove the optimality of the synchronous coupling. This approach enables us to relax the assumptions on the cost function and the coefficients and establish this optimality property in its natural generality.
	
	We clarify that, while the analysis in \cite{BiTa19} also pertains to multidimensional diffusions, the authors only identify an explicit optimiser in dimension one. In fact, the optimality of the synchronous coupling does not generally extend to higher dimensions. Our \Cref{ex:2d-1,ex:2d-2} demonstrate the sub-optimality of the synchronous coupling for certain multidimensional SDEs. We also note that the discrete approximation methods described below rely on one-dimensional results from optimal transport; see \Cref{rem:monotone-optimality}.

\subsection*{Discrete approximation methods and stability}
	
	Consider now the problem of optimally coupling the laws of one-dimensional discrete-time stochastic processes with $n \in \N$ time steps, i.e.~coupling probability measures on $\R^n$.
	A key object of study in this paper is the \emph{Knothe--Rosenblatt rearrangement}, introduced in \cite{Ro52} and \cite{Kn57}, which generalises the classical monotone rearrangement to the laws of such discrete-time processes; see \Cref{fig:kr} for an illustration.
	When restricting to bicausal couplings and imposing certain monotonicity properties on the marginal laws, it turns out that the Knothe--Rosenblatt rearrangement is optimal for an $L^p$-cost.
	For a two-step discrete bicausal optimal transport problem, this was first shown by R{\"u}schendorf \cite{Ru85}. In \cite{BaBeLiZa17}, based on a recursive argument, the result was then generalised to a multi-stage discrete problem with Markov marginal laws. We generalise this result to an even broader class of cost functions and marginal laws, and we link the assumptions on the marginals to the notion of stochastic monotonicity; see \Cref{prop:kr-discrete-optimality}. We emphasise that the dimension $n \in \N$ here represents the number of time steps of the discrete-time process and that the state space is necessarily one-dimensional; see \Cref{rem:monotone-optimality}.
	
	This discrete-time optimality result underpins our analysis in continuous time; applying this result to a carefully chosen discretisation of the SDEs, we deduce the optimality of the synchronous coupling. 
	Indeed, our proof relies on an approximation procedure where we first solve the associated discrete-time problem and then pass to the limit. 
	Our method of proof thus unveils the informational and structural similarities between the Knothe--Rosenblatt and synchronous couplings. For this reason, we advocate the interpretation of the synchronous coupling as the continuous analogue of the Knothe--Rosenblatt rearrangement.
	In order to carry out the above procedure, for the class of bicausal transport problems that we study, we establish a stability result for optimal couplings (see \Cref{prop:stability_optimiser}).

	\begin{figure}[h]
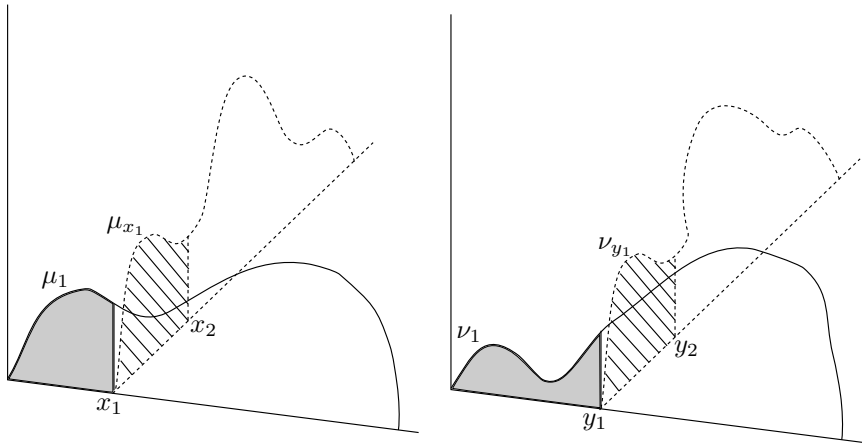

	    \centering
    	\incfig{fig-kr}
	    \caption{Illustration of the Knothe--Rosenblatt rearrangement in two dimensions. The first marginals of $\mu$, $\nu$ are denoted $\mu_1$, $\nu_1$, and the conditional distributions by $\mu_{x_1}$, $\nu_{y_1}$. Similarly shaded regions have the same area.}
    	\label{fig:kr}
	\end{figure}
	
	We further obtain approximation results in the adapted Wasserstein distance for bicausal couplings between the laws of SDEs. Given coefficients $(b,\sigma)$, $(\bar b,\bar\sigma)$ and $n \in \N$, consider the \emph{Euler--Maruyama scheme} $(X^n,\bar X^n)$, which is given by $(X^n_0, \bar X^n_0) = (x_0, x_0)$ and, for $h=1/n$, $k=0,...,n-1$,
\begin{equation}\label{eq:approx}
\begin{cases}
    X^n_t = X^n_{kh}+b(X^n_{kh})\!\left(t-kh\right)+\sigma(X^n_{kh})\!\left(W_t-W_{kh}\right)\\
	\bar X^n_t = \bar X^n_{kh}+\bar b(\bar X^n_{kh})\!\left(t-kh\right)+\bar\sigma(\bar X^n_{kh})\!\left(\bar W_t-\bar W_{kh}\right)
\end{cases}
t\in(kh,(k+1)h].
\end{equation}
With the adapted Wasserstein distance defined analogously to \eqref{eq:adapted_wasserstein} for marginal distributions on $\Omega\times\Omega$, we have the following result: 
\begin{theorem}\label{thm:convergence}
	Let $b,\bar b\colon \R \to \R$, $\sigma,\bar\sigma \colon \R\to\R_+$ be Lipschitz, and let $\pi\in\cplbc(\mu^{b,\sigma},\mu^{\bar b,\bar\sigma})$. Then there exists a probability space supporting two correlated Wiener processes $W$ and $\bar W$ such that the joint law of the processes $(X^n,\bar X^n)$ given by \eqref{eq:approx} satisfies $\AW_p(\Law(X^n,\bar X^n),\pi)\xrightarrow{n \to \infty} 0$, $p\ge 1$. 
\end{theorem}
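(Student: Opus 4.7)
My approach hinges on the reformulation of bi-causal couplings in terms of correlated Wiener processes provided by \Cref{prop:bion-nadal-talay-bc}, combined with the classical $L^p$-stability of the Euler--Maruyama scheme for Lipschitz SDEs. To begin, I would apply \Cref{prop:bion-nadal-talay-bc} to the given bi-causal coupling $\pi$ and obtain a probability space $(\Omega^\ast, \Fc^\ast, \P^\ast)$ carrying a pair of correlated Wiener processes $(W, \bar W)$ such that the strong solutions $X$ and $\bar X$ of the two SDEs in \eqref{eq:sde}, driven by $W$ and $\bar W$ with coefficients $(b,\sigma)$ and $(\bar b,\bar\sigma)$ respectively, satisfy $\Law(X, \bar X) = \pi$. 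This $(W,\bar W)$ is precisely the pair whose existence is claimed in the theorem.

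On the same probability space, and reusing the same pair $(W, \bar W)$, I would then define the Euler--Maruyama iterates $(X^n, \bar X^n)$ via \eqref{eq:approx}. The Lipschitz assumption yields the classical strong convergence estimate
\begin{equation*}
\E\left[\sup_{t \in [0,1]}|X^n_t - X_t|^p\right] + \E\left[\sup_{t \in [0,1]}|\bar X^n_t - \bar X_t|^p\right] \xrightarrow{n \to \infty} 0.
\end{equation*}
To conclude, I would consider the coupling $\Pi^n := \Law(X^n, \bar X^n, X, \bar X)$ between $\Law(X^n, \bar X^n)$ and $\pi$ on the product space $(\Omega \times \Omega) \times (\Omega \times \Omega)$. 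Granted that $\Pi^n$ lies in the set of bi-causal couplings for the product filtration, the bound
\begin{equation*}
\AW_p^p(\Law(X^n, \bar X^n), \pi) \;\le\; \E^{\Pi^n}\left[\int_0^1\Big(|\omega_t-\omega'_t|^p + |\bar\omega_t - \bar\omega'_t|^p\Big)\dt\right]
\end{equation*}
combined with the strong convergence above closes the argument.

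The principal obstacle is the verification of bi-causality of $\Pi^n$ in the product sense. Mere adaptedness of all four processes to the filtration generated by $(W, \bar W)$ is not in itself enough: one must check that, conditionally on $(X^n_s, \bar X^n_s)_{s \le t}$, the restriction $(X_s, \bar X_s)_{s \le t}$ is independent of the future $(X^n_s, \bar X^n_s)_{s > t}$, and symmetrically in the opposite direction. This is ultimately a Markov-type computation exploiting the independent-increments structure of $(W, \bar W)$ and the explicit form of the Euler--Maruyama recursion: conditionally on the state at time $t$, the future of $(X^n, \bar X^n)$ is a measurable functional of the post-$t$ increments of $(W, \bar W)$, which are independent of $(W, \bar W)|_{[0,t]}$ and hence of $(X_s, \bar X_s)_{s \le t}$; the disintegrations required for bi-causality then factor through the past states as needed.
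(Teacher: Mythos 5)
Your proposal is correct and takes essentially the same route as the paper: the paper proves the slightly more general \Cref{thm:convergence-general} by invoking \Cref{prop:bion-nadal-talay-bc} to realise $\pi$ as the law of the strong solution driven by a correlated Wiener process $(W,\bar W)$, running the approximation scheme on the same space with the same drivers, verifying bi-causality of $\Law((X^n,\bar X^n),(X,\bar X))$ by exactly the independence-of-increments conditioning you sketch, and concluding from the classical $L^p$-convergence of the Euler--Maruyama scheme. The only cosmetic difference is that you argue directly for the Euler scheme, whereas the paper phrases the argument for a general scheme satisfying \Cref{ass:convergence-aw} and then specialises.
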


The above result holds also for SDEs with path-dependent coefficients (see \Cref{thm:convergence-general}). To obtain our main results, we will work with a modification of the Euler--Maruyama scheme in which the increments of the Wiener process are truncated; cf.~\citet{MiReTr02,LiPa22}. Adapting \Cref{thm:convergence} for this modified scheme, we will prove convergence of the Knothe--Rosenblatt rearrangement to the synchronous coupling in a sense that is made precise in \Cref{prop:aw-conv-kr-sync}.

While these results on discretisation and stability are crucial for our analysis, they are also of independent interest. For instance, our time-discretisation method suggests a possible approach to numerical approximation of adapted Wasserstein distances between the laws of SDEs; however, we do not explore this direction further. 
For existing numerical methods for computing adapted Wasserstein distances, we refer to \citet{EcPa22,PiWe21}, and the references therein.

\subsection*{The synchronous distance and the associated topology}

	We also study the topology induced by the adapted Wasserstein distance $\AW_p$.
	In particular, we investigate the relationship between the topologies induced by different distances on the spaces $\Pc_p$ of laws of continuous-time stochastic processes.
	
	The classical Wasserstein distance  \eqref{eq:wasserstein_Lp} metrises the usual weak topology on the space $\Pc_p$ (see, e.g.~\citep[Theorem 7.12]{Vi03}).
	Moreover, for $p \geq 1$, one may consider the (asymmetric) \emph{causal} Wasserstein distance $\CW_p(\mu,\nu)$, $\mu, \nu \in \Pc_p$, defined analogously to the adapted (bicausal) Wasserstein distance, by replacing $\cplbc(\mu,\nu)$ with $\cplc(\mu,\nu)$ in \eqref{eq:adapted_wasserstein}. In this asymmetric setting, we say that $\mu_n$ converges to $\mu$ in $\CW_p$, if $\CW_p(\mu,\mu_n)\to 0$. We also consider its symmetrised version $\SCW_p(\mu,\nu)=\max(\CW_p(\mu,\nu),\CW_p(\nu,\mu))$, $\mu,\nu\in\Pc_p$.
	Finally, inspired by the pivotal role played by the synchronous coupling, we introduce the \emph{synchronous distance} $\SW_p$, defined by
	\begin{equation}
		\SW_p^p\big(\mu^{b,\sigma},\mu^{\bar b,\bar\sigma}\big)
		 \coloneqq \E\!\left[\int_0^1\big|X^{b,\sigma}_t-X^{\bar b,\bar\sigma}_t\big|^p \D t \right], \quad p \geq 1,
	\end{equation}
	where $X^{b,\sigma},X^{\bar b,\bar\sigma}$ are the $p$-integrable solutions of the SDE \eqref{eq:sde}, with coefficients $(b, \sigma),(\bar b, \bar \sigma)$, evaluated on some probability space with respect to the \emph{same} Wiener process $W$ (c.f.~the definition of the synchronous coupling). 
	This distance is notably stronger than all of the above-mentioned distances.
		
	We show that all the distances discussed above induce the same topology when restricted to solutions of the SDE \eqref{eq:sde} for which the coefficients belong to the following set:
\begin{equation}
	\A^\Lambda
	=
	\!\left\{\varphi\in C(\R,\R)\colon
	 |\varphi(x)-\varphi(y)|\le \Lambda |x-y|
	 \textrm{ and }
	 |\varphi(0)|\le \Lambda,
	 \;x,y\in\R
	 \right\},
	 \;\; \Lambda>0.
\end{equation}

\begin{theorem}\label{thm:topologies}
	Restricted to the set 
	$\Pc^\Lambda=\{
 	\mu^{b,\sigma}:b,\sigma\in\A^\Lambda\}$, $\Lambda>0$,
	the topologies induced by the following metrics all coincide and are independent of $p\in[1,\infty)$: 
	\begin{itemize}[label = ---]
		\item $\SW_p$, the synchronous distance;
		\item $\AW_p$, the adapted Wasserstein distance;
		\item $\SCW_p$, the symmetrised causal Wasserstein distance;
		\item $\W_p$, the Wasserstein distance.
	\end{itemize}
	The above topologies also remain equal when, in the definition of any of the metrics, we replace the cost $\int_0^1 |\omega_t - \bar \omega_t|^p \D t$ by $\sup_{t \in [0, 1]}|\omega_t - \bar \omega_t|^p$.
	
	This common topology is further equal to the topology of $\CW_p$ convergence, the topology of weak convergence when we equip $\Omega$ with the $L^p(\D t)$ norm, for arbitrary $p \in [1, \infty]$, and also to the topology of convergence in finite-dimensional distributions. 
	
	Moreover, $\Pc^\Lambda$ is compact in this common topology.  
\end{theorem}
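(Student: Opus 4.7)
The plan is to arrange all the listed distances in a hierarchy, show that the strongest of them, $\SW_p$, induces a compact topology on $\Pc^\Lambda$, and then collapse the hierarchy using the classical fact that a continuous bijection from a compact space to a Hausdorff space is a homeomorphism.

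For the hierarchy: since $\cpl \supset \cplc \supset \cplbc$, one has $\W_p \leq \CW_p \leq \SCW_p \leq \AW_p$ (the last inequality using symmetry of $\AW_p$). Because the synchronous coupling is bi-causal, $\AW_p \leq \SW_p$; and by \Cref{thm:synchronous_optimal} (applicable since every pair $(b, \sigma) \in \A^\Lambda \times \A^\Lambda$ is Lipschitz with linear growth and hence satisfies \Cref{ass:main}), this is actually an equality on $\Pc^\Lambda$. On the other hand, $\W_p$ convergence dominates weak convergence on $\Omega$ with any $L^{p'}(\D t)$ norm, which in turn dominates convergence in finite-dimensional distributions. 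Thus the identity is continuous from each topology to the weaker ones.

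Next, I would establish that $\Pc^\Lambda$ is compact in $\SW_p$. By Arzelà--Ascoli, $\A^\Lambda$ is compact in $C(\R, \R)$ under uniform convergence on compact sets. I would then show that the map $(b, \sigma) \mapsto \mu^{b, \sigma}$ is continuous from $\A^\Lambda \times \A^\Lambda$ (with that topology) into $\Pc^\Lambda$ equipped with $\SW_p$: given $(b_n, \sigma_n) \to (b, \sigma)$, drive the corresponding SDEs by a common Wiener process; the uniform linear growth yields $\sup_n \E[\sup_t |X^{b_n, \sigma_n}_t|^{p'}] < \infty$ for each $p' \geq 1$, and a truncation to a compact spatial region (where the coefficients converge uniformly) combined with Grönwall's inequality and the uniform Lipschitz constant delivers $L^p(\P \otimes \D t)$-convergence of the solutions, which is precisely $\SW_p(\mu^{b_n, \sigma_n}, \mu^{b, \sigma}) \to 0$. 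Hence $\Pc^\Lambda$ is the continuous image of a compact set, so compact in $\SW_p$.

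Finally, each of the weaker topologies $\tau$ listed in the statement is Hausdorff on $\Pc^\Lambda$ (two distinct elements of $\Pc^\Lambda$ differ already in some finite-dimensional marginal). Therefore the identity map $(\Pc^\Lambda, \SW_p) \to (\Pc^\Lambda, \tau)$ is a continuous bijection from a compact space to a Hausdorff space, hence a homeomorphism, and all topologies in the statement coincide; $p$-independence then follows from uniform integrability furnished by the uniform moment bounds. The main obstacle is the SDE stability step underlying the compactness: passing from coefficient convergence merely on compacts to $L^p$-convergence of solutions requires the careful truncation argument and the uniform moment control afforded by $\A^\Lambda$ --- once this quantitative input is in place, the rest is the soft compact-to-Hausdorff collapse.
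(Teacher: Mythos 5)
Your proposal is correct and follows essentially the same route as the paper: order the distances with the synchronous one on top, prove compactness of $\Pc^\Lambda$ under the synchronous distance via Arzel\`a--Ascoli on $\A^\Lambda$ plus an SDE stability argument with a common driving Wiener process, and then collapse the hierarchy by the continuous-bijection-from-compact-to-Hausdorff fact. The only substantive difference is in the stability step: you prove it directly by a Gr\"onwall/truncation estimate exploiting the uniform Lipschitz constant $\Lambda$ (which is valid and, in this Lipschitz setting, more elementary), whereas the paper simply invokes its general stability result, \Cref{prop:stab_abstract}, which is proved via martingale problems and Skorokhod representation and covers correlated drivers and path-dependent coefficients; the paper also runs the argument with the sup-norm cost $\SW_p^\infty$ as the auxiliary strongest topology, which additionally yields the remark on sup-cost versions. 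One small economy you miss: the appeal to \Cref{thm:synchronous_optimal} is unnecessary --- bi-causality of the synchronous coupling already gives $\AW_p\le\SW_p$, which is all the hierarchy needs, and the paper's proof of this theorem deliberately does not rely on the optimality result.
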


	Note that, in contrast to the optimality results presented thus far, the topological equivalence of \Cref{thm:topologies} carries over to higher dimensions; see \Cref{rem:multidim-topo}.
	
	In discrete time, \citet{BePaPo21} studied a distance induced by the Knothe--Rosenblatt rearrangement and showed that it is topologically equivalent to the adapted Wasserstein distance. Our \Cref{thm:topologies} can thus be seen as a continuous-time analogue of \cite[Theorem 1.4]{BePaPo21}, with the synchronous coupling in place of the Knothe--Rosenblatt rearrangement.
	For discrete-time processes, \cite{BaBaBeEd19b} also established that the same topology is generated by the adapted Wasserstein distance, the nested distance, and the distances introduced by \citet{Al81} and \citet{He96}; we refer to \cite{BaBeEdPi17,Ed19} for further properties of this common topology.
	In fact, an even stronger result is true: \citet{BaBePa21} and \citet{Pa22} go beyond the convention of identifying a process with its law and instead consider processes equipped with a filtration. It is shown that all topologies that are strong enough to encode the information of the filtration still coincide, even in this generalised discrete-time setting.
	In a continuous-time setup, \citet[Propositions 1.8, 1.9]{BiTa19} give a result in a similar spirit to \Cref{thm:topologies} for their modified Wasserstein distance. We note that our proof is remarkably simple as it is a straightforward application of stability results for SDEs.

\subsection*{Structure of the article}

In \Cref{sec:formulation}, we establish the equality of the modified and adapted Wasserstein distances; we also provide a stability result and prove \Cref{thm:convergence}.
We give our optimality result for the Knothe--Rosenblatt rearrangement in \Cref{sec:kr}. In \Cref{sec:numerical-scheme}, we introduce a variation of the classical Euler--Maruyama scheme and use it to prove \Cref{thm:synchronous_optimal} under an additional Lipschitz assumption. We complete the proof of \Cref{thm:synchronous_optimal} in \Cref{sec:continuous-general}. In \Cref{sec:discontinuous}, we establish a variation of \Cref{thm:synchronous_optimal} which allows for discontinuities in the drift coefficients.
In \Cref{sec:topology}, we prove \Cref{thm:topologies} on the equality of topologies.
In \Cref{sec:discussion}, we collect some examples, the first of which motivates the introduction of the adapted Wasserstein distance. We then present counterexamples to the optimality of the synchronous coupling in non-Markovian and higher dimensional settings, as well as a counterexample to the optimality of the Knothe--Rosenblatt rearrangement for a particular choice of cost function. Various approximation and stability results are deferred to the appendix.

\section{Preliminary results on bicausal couplings and approximation in $\AW_p$}\label{sec:formulation}

Throughout this paper we work in dimension one. 
Define the space of continuous paths $\Omega\coloneqq  C([0, 1], \R)$ equipped with the uniform topology and corresponding Borel sigma-field $\F \coloneqq \B(\Omega)$.
We also equip $\Omega$ with the canonical filtration $(\F_t)_{t \in [0, 1]}$ and note that $\F = \F_1$.
 
We will repeatedly make use of the following definition of correlated Wiener processes.

\begin{definition}[correlated Wiener process]\label{def:correlated}
	Let $(\Omega, \F, (\G_t)_{t \in [0, 1]}, \P)$ be a complete filtered probability space on which two standard real-valued $\G$-Wiener processes $W$ and $\bar W$ are defined. We say that the two-dimensional process $(W, \bar W)$ is a $\rho$\emph{-correlated Wiener process} if the cross-variation satisfies
	\begin{equation}
		\D \langle W, \bar W \rangle_t = \rho(t, W, \bar W) \D t,
	\end{equation}
    for some $\B([0,1])\otimes \G$-progressively measurable function $\rho\colon [0, 1] \times \Omega \times \Omega \to [-1, 1]$, which is then unique in the $(d\mathbb P\times dt)$-sense. We say that  $(W, \bar W)$ is a correlated Wiener process if it is a $\rho$-correlated Wiener process for some $\rho$. 
\end{definition}

Given $x_0,\bar x_0 \in \R$ and path-dependent coefficients $b, \bar b\colon [0, 1] \times \Omega \to \R$, $\sigma, \bar \sigma\colon [0, 1] \times \Omega \to \R_+$, which are progressively measurable, consider the system of SDEs
\begin{equation}\label{eq:sde-path-dep}
	\begin{cases}
  X_t = x_0 + \int_0^t b(s,X)\D s + \int_0^t \sigma(s,X) \D W_s\\
  \bar X_t = \bar x_0 + \int_0^t\bar b(s,\bar X)\D s + \int_0^t \bar \sigma(s,\bar X)\D \bar W_s
\end{cases}
t \in [0, 1].
\end{equation}
Suppose that there exists some correlated Wiener process $(W, \bar W)$ and a $\G$-adapted process $(X, \bar X)$ (cf.\ \Cref{def:correlated}) that satisfies \eqref{eq:sde-path-dep}. In this case, we say that $(X, \bar X)$ is a \emph{strong solution of the system} \eqref{eq:sde-path-dep} driven by the correlated Wiener process $(W, \bar W)$. Writing $\mu, \nu$ for the marginal laws of $X, \bar X$, we denote the set of couplings of the form $\Law(X, \bar X)$ by $\widetilde\cpl (\mu, \nu)$.

\subsection{Characterisation of bicausal couplings between SDEs}

\begin{proposition}\label{prop:bion-nadal-talay-bc}
	Suppose that there exist unique strong solutions to the SDEs \eqref{eq:sde-path-dep} and write $\mu,\nu$ for their respective laws. Then the set of bicausal couplings $\cplbc(\mu, \nu)$ is equal to the set of couplings $\widetilde\cpl(\mu, \nu)$.
\end{proposition}

\begin{proof}
	Suppose that $\tilde \pi \in \widetilde\cpl(\mu, \nu)$. Then $\tilde \pi \in \cpl(\mu, \nu)$. Moreover, there is some correlated Wiener process $(W, \bar W)$ such that $\tilde \pi = \Law(X, \bar X)$, where $(X, \bar X)$ is a strong solution of the system \eqref{eq:sde-path-dep} driven by $(W, \bar W)$. We see that $\Law(W, \bar W)$ is bicausal, since $W$, $\bar W$ are Wiener processes with respect to the same filtration $\G$. Since $(X, \bar X)$ is adapted to the completed filtration of $(W, \bar W)$, we conclude that $\tilde \pi$ is bicausal.
		
	To show the converse, suppose now that $\pi \in \cplba(\mu,\nu)$. {Let $(\omega, \bar \omega)$ be the canonical process on the product space, and let $\F, \bar \F$ denote the canonical filtrations corresponding to $\omega, \bar \omega$, respectively.
	Note that, by definition of the measure $\mu$, we may define a continuous process $M$ such that, under $\pi$, $M$ is an $\F^\mu$-martingale with quadratic variation $\langle M \rangle$ given by $\langle M\rangle_t=\int_0^t \sigma(s,\omega)^2\D s$, for all $t \in [0, 1]$, and
	\begin{equation}\label{eq:causal-martingale-rep}
		\omega_t = x_0 + \int_0^t b(s,\omega)\D s + M_t, \quad \text{for all } t \in [0, 1].
	\end{equation}
	Since the coupling $\pi$ is bicausal, we have the following independence property: for any $s \in [0, 1]$, conditional on $\F^\mu_s$, $(\omega_t)_{t\in [0, 1]}$ is independent of $\bar\F_s$ under $\pi$. Therefore, for any $s, t \in [0, 1]$ with $s < t$, we have
	\begin{equation}\label{eq:filtration-decomp-causality}
		\begin{split}
			\E^\pi\!\left[M_t - M_s \mid \F^\mu_s \otimes \bar\F_s\right] & = \E^\pi\!\left[M_t - M_s \mid \F^\mu_s \right] = 0,
		 \end{split}
	\end{equation}
	and by the tower property of conditional expectation, $\E^\pi\!\left[M_t - M_s \mid \F_s \otimes \bar\F_s\right] = 0$. Thus $M$ is also an $\F \otimes \bar \F$-martingale under $\pi$.
	
	Using the symmetry of the definition of bicausality, we find that $\bar \omega$ admits an analogous representation to \eqref{eq:causal-martingale-rep} under $\pi$ for some continuous $\F \otimes \bar \F$-martingale $\bar M$.

	Enlarging the probability space as necessary, introduce a Wiener process $\hat W$ that is independent of $M$ and $\bar M$ under $\pi$. Define processes $W, \bar{W}$ via
		\begin{equation}
			\D W_t = \mathds1_{\{\sigma(t,\omega)=0\}}\D\hat W_t+\mathds1_{\{\sigma(t,\omega)\neq 0\}} \frac{\D M_t}{\sigma(t,\omega)},
			\quad
			\D \bar{W}_t = \mathds1_{\{\bar\sigma(t,\bar \omega)=0\}}\D\hat W_t+\mathds1_{\{\bar \sigma(t,\bar \omega)\neq 0\}}\frac{\D \bar M_t}{\bar{\sigma}(t,\bar \omega)},
		\end{equation}
	and let $\G$ denote the completion of the natural filtration of $(W, \bar W)$ under $\pi$.
	Then $(\omega, \bar \omega)$ satisfies \eqref{eq:sde-path-dep} driven by $(W, \bar{W})$. Further, by the bicausality of $\pi$ and the independence of $\hat W$, both $W$ and $\bar W$ are $\G$-martingales under $\pi$. It then follows from  L\'evy's characterisation that both $W$ and $\bar W$ are $\G$-Wiener processes. By the Kunita--Watanabe inequality (see, e.g.~\cite[Proposition 4.18]{LG16}), we have that $\D \langle W, \bar W \rangle$ is almost surely absolutely continuous with respect to Lebesgue measure, and so there exists a $\B([0,1]) \otimes \G$-progressively measurable function $\rho\colon [0, 1] \times \Omega \times \Omega \to [-1, 1]$ such that $\rho_t \D t = \D \langle W, \bar W \rangle_t$, for all $t \in [0, 1]$.
	Therefore $(W, \bar W)$ is a $\rho$-correlated Wiener process, as in \Cref{def:correlated}.
		
	By assumption, the SDE with coefficients $b, \sigma$ driven by the Wiener process $W$ admits a unique strong solution $\tilde X$. Letting $\mathcal H$ be the natural filtration of $\hat W$, we have that both $\tilde X$ and $\omega$ are adapted to $\F \otimes \bar \F \otimes \mathcal H$. By pathwise uniqueness, we deduce that $\tilde X=\omega$ under $\pi$ and, in particular, $\omega$ is adapted to the natural filtration of $W$ and hence to $\G$. The analogous statement applies to $\bar \omega$,  and so we have that the canonical process $(\omega, \bar \omega)$ under the bicausal coupling $\pi$ is a strong solution of the system \eqref{eq:sde-path-dep} with respect to $(W, \bar W)$, as required.}
\end{proof}

\begin{remark}[modified Wasserstein distance]\label{rem:BN-T:distance}
For any $p \ge 1$, consider the subset $\bar \Pc_p \subset \Pc_p$ consisting of laws of SDEs of the form \eqref{eq:sde-path-dep}. On this set, one can define a metric $\bar{\W}_p$ in the same way as the adapted Wasserstein distance $\AW_p$ is defined in \eqref{eq:adapted_wasserstein}, but with the set $\cplbc(\mu, \nu)$ of bicausal couplings between $\mu, \nu \in \bar \Pc_p$ replaced by $\widetilde\cpl(\mu, \nu)$.
According to \Cref{prop:bion-nadal-talay-bc}, for $\mu, \nu \in \bar \Pc_p$, we have $\bar{\W}_p(\mu,\nu) = \AW_p(\mu,\nu)$.

For $p = 2$, restricting to elements of $\bar \Pc_2$ that are laws of time-homogeneous SDEs of the form \eqref{eq:sde}, $\bar{\W}_2$ thus defined is the \emph{modified Wasserstein distance} introduced by Bion-Nadal and Talay in \cite{BiTa19}.
\end{remark}

\begin{remark}
	\label{rem:right.cont.filt}
	In our definition of (bi)causal couplings we make use of the canonical filtration, while in
	 \cite{BaBaBeEd19a} the right-continuous filtration is used.  In general, our definition gives a smaller set of couplings. However, when the marginal processes are strong Markov both sets coincide following an application of \cite[Ch.\ 2, Proposition 7.7]{KaSh91}. A sufficient condition for this to hold is that the coefficients of the SDEs are locally bounded and that the associated martingale problems are well-posed for all initial conditions; see \cite[Ch.\ 5, Theorem 4.20]{KaSh91}. In \Cref{ex:motivation}, we also encounter a situation where the above sets of couplings coincide, although one of the marginal processes is not strong Markov.
\end{remark}

\subsection{Stability of bicausal optimisers}

Our strategy for proving the main results in this paper is to approximate our problem of interest by problems for which the optimiser is known. A key ingredient in this approach is the stability of optimisers. We here establish such a stability result for a more general class of \emph{bicausal optimal transport problems} obtained by replacing the $p$-norm in the definition \eqref{eq:adapted_wasserstein} of $\AW_p$ with a general \emph{cost function} $c\colon \R \times \R \to \R$; we typically assume that there exists some $K>0$ such that
\begin{equation}\label{eq:p_growth_c}
	|c(x,y)|\leq K[1+|x|^p+|y|^p],
	\quad\textrm{for all }x,y\in\R.
\end{equation}

Since it will be useful to allow for approximation by processes that are not necessarily continuous, we take $\hat\Omega=\mathbb{D}([0,1],\R)$ to be the Polish space of c{\`a}dl{\`a}g functions equipped with the Skorokhod topology and corresponding Borel $\sigma$-field $\hat \Fc$. We define product spaces, the canonical process, the set of $p$-integrable probability measures, and the set of couplings in total analogy to the continuous case.
In particular, for $p\ge 1$ and $\pi,\pi'\in\Pc(\hat\Omega\times\hat\Omega)$ with finite $p\textsuperscript{th}$ moment, in analogy to \eqref{eq:wasserstein_Lp}, the $p$-Wasserstein distance is given by
\begin{align}\label{eq:distance_Wp_Lp}
\W_p^p(\pi,\pi')=\inf_{\alpha\in\cpl(\pi,\pi')}\E^\alpha\!\left[\int_0^1|(\omega_t,\bar\omega_t)-(\omega'_t,\bar\omega'_t)|^p\dt\right],
\end{align}
where $\cpl(\pi,\pi')$ denotes the set of probability measures on $(\hat\Omega\times\hat\Omega)\times(\hat\Omega\times\hat\Omega)$ with marginal distribution onto the first (resp. last) two coordinates given by $\pi$ (resp. $\pi'$), and $((\omega,\bar\omega),(\omega',\bar\omega'))$ denotes the corresponding canonical process.

\begin{remark}\label{rem:borel-sets}
	Although we consider Wasserstein distances with respect to an $L^p$-metric, we defined the Borel $\sigma$-fields $\Fc$ and $\hat \Fc$ on $\Omega$ and $\hat \Omega$ with respect to the uniform topology and Skorokhod topology, respectively. We now verify that these Borel $\sigma$-fields are equal to the Borel $\sigma$-fields corresponding to the topology of $L^p$-convergence.  Indeed, both $\Fc$ and $\hat \Fc$ are generated by the coordinate mapping and are thus included in the respective Borel $\sigma$-field for $L^p$-convergence; see, for example, \cite[Example 1.3 and Theorem 12.5]{Bi}. On the other hand, convergence in the uniform topology (resp.~Skorokhod topology) implies $L^p$-convergence on $\Omega$ (resp.~$\hat \Omega$) and so we have equality of the Borel $\sigma$-fields.
\end{remark}

We start with an auxiliary result.

\begin{proposition}\label{prop:stability_optimiser}
	Let $p\ge 1$. 
	Suppose that there exist unique strong solutions to the SDEs \eqref{eq:sde-path-dep} and write $\mu, \nu$ for their respective laws.
	Let $X^n,\bar X^n \colon \Omega\to\hat\Omega$, $n\in\N$, be measurable and suppose that for any $\rho$-correlated Wiener process $(W,\bar W)$,
	\begin{align}\label{eq:stability_cond_wp}
	\W_p(\pi^{\rho,n},\pi^\rho)\xrightarrow{n\to\infty}0,
\end{align}
	where $\pi^{\rho,n}=\Law(X^n\circ W,\bar X^n\circ \bar W)$ and $\pi^\rho=\Law(X,\bar X)$ with $X,\bar X$ solving \eqref{eq:sde-path-dep} with respect to $(W,\bar W)$. 
	 Let $c \colon \mathbb R\times \mathbb R \to\mathbb R$ be continuous and satisfy \eqref{eq:p_growth_c} for some $K>0$ and suppose that there exists some $\hat\rho\colon [0, 1] \times \Omega \times \Omega \to [-1, 1]$ such that for each $n\in\N$, $\pi^{\hat\rho,n}$ attains
\[
\inf_{\pi\in\widetilde\cpl(\mu^n,\nu^n)}\iint_0^1c\!\left(\omega_t,\bar{\omega}_t\right)\dt\di\pi,
\]
where $\widetilde\cpl(\mu^n,\nu^n)$ denotes all couplings of the form $\pi^{\rho,n}$ for some correlation process $\rho$ (writing $\mu^n,\nu^n$ for their marginals). 
	Then
\begin{equation}\label{eq:convergence_general}
\lim_{n\to\infty}\;
\inf_{\pi\in\widetilde\cpl(\mu^n,\nu^n)}\iint_0^1c\!\left(\omega_t,\bar{\omega}_t\right)\dt\di\pi\;
=
\inf_{\pi\in\cplbc(\mu,\nu)}\iint_0^1c\!\left(\omega_t,\bar{\omega}_t\right)\dt\di\pi;
\end{equation}
moreover, the right hand side is attained by $\pi^{\hat\rho}$.
\end{proposition}

\begin{proof}
	By assumption, there exists some $\hat\rho$-correlated Wiener process such that, for each $n\in\N$, and for any admissible correlation process $\rho$,
	\begin{align}\label{eq:revision_proof_stability}
		\iint_0^1c(\omega_t,\bar\omega_t)\dt\di\pi^{\hat\rho,n}
		\le 	\iint_0^1c(\omega_t,\bar\omega_t)\dt\di\pi^{\rho,n}.
	\end{align}
	
	We now argue that the functional $(x(\cdot),y(\cdot))\mapsto\int_0^1c(x(t),y(t))\dt$ is continuous with at most polynomial growth of order $p$ on with respect to the $L^p$-metric on $\hat \Omega \times \hat \Omega$.
	The growth claim is immediate since
	\begin{align}
		\!\left|\int_0^1c(x(t),y(t))\dt\right|\!
		\le \int_0^1K(1+|x(t)|^p+|y(t)|^p)\dt
		=K(1+\|x\|_{L^p}^p+\|y\|_{L^p}^p).
	\end{align}
	Suppose now that $\|x^n-x\|_{L^p}+\|y^n-y\|_{L^p}\to 0$. The sequences $\{|x^n(\cdot)|^p\}_n$ and $\{|y^n(\cdot)|^p\}_n$ are then uniformly integrable w.r.t. Lebesgue measure on $[0,1]$. By the growth assumption on $c$, the sequence $\{|c(x^n(\cdot),y^n(\cdot))|\}_n$ is similarly uniformly integrable. 
	Define $a_n \coloneqq \int_0^1c(x^n(t),y^n(t))\dt$. For any subsequence $\{a_{n_k}\}_k$ we can find a sub-subsequence $\{a_{n_{k_j}}\}_j$ such that $c(x^{n_{k_j}}(\cdot),y^{n_{k_j}}(\cdot))\to c(x(\cdot),y(\cdot))$ almost surely, as $c$ is continuous. Then by uniform integrability we infer that $a_{n_{k_j}}\to a \coloneqq \int_0^1c(x(t),y(t))\dt$. As the limit does not depend on the subsequence, we conclude that $a_n\to a$.

Equipping $\hat\Omega$ with the topology induced by the $L^p$-metric, it follows from \Cref{rem:borel-sets} that the product space $\hat \Omega \times \hat \Omega$ is a separable Radon space. Given this property, the continuity and polynomial growth shown above, and the $\W_p$-convergence \eqref{eq:stability_cond_wp}, we can apply \cite[Lemma~5.1.7 and Proposition~7.1.5]{AGS} to take the limit in \eqref{eq:revision_proof_stability} as $n\to\infty$, and we deduce that, for any admissible correlation process $\rho$, 
\begin{align}
	\iint_0^1c(\omega_t,\bar\omega_t)\dt\di\pi^{\hat\rho}
	\le 	\iint_0^1c(\omega_t,\bar\omega_t)\dt\di\pi^{\rho}.
\end{align}

By \Cref{prop:bion-nadal-talay-bc}, $\pi^{\hat\rho}$ thus attains the infimum on the right-hand side of \eqref{eq:convergence_general}. 
Once again using the convergence of $\pi^{\hat\rho,n}$ to $\pi^{\hat\rho}$ in $\W_p$, together with the optimality of these couplings for their respective transport problems, we conclude that the value of the problem also converges.
\end{proof}

We now make use of the above result to obtain two stability results where the assumptions are placed directly on the marginals; they will be crucial for our upcoming analysis. The first one allows for c{\`a}dl{\`a}g approximations. 

\begin{corollary}\label{rem:suff_cond_stab_lp}
	Suppose that there exist unique strong solutions $X,\bar X$ to the SDEs \eqref{eq:sde-path-dep}. Let $p\ge 1$ and $X^n,\bar X^n \colon \Omega\to\hat\Omega$, $n\in\N$, be measurable and such that $X^n\circ W$ (resp. $\bar X^n\circ \bar W$) converges in $L^p$ to $X$ (resp. $\bar X$). Let $c \colon \mathbb R\times \mathbb R \to\mathbb R$ be continuous and satisfy \eqref{eq:p_growth_c}, and suppose that there exists some $\hat\rho$ as in \Cref{prop:stability_optimiser}. Then condition \eqref{eq:stability_cond_wp} is satisfied and the conclusions of \Cref{prop:stability_optimiser} hold true. 
\end{corollary}
 	
\begin{proof}
	For any $\rho$-correlated Wiener process $(W,\bar W)$,  let $(X,\bar X)$ be the solution of the SDE \eqref{eq:sde-path-dep} with respect to $(W,\bar W)$ and define $(X^n,\bar X^n)=(X^n\circ W,\bar X^n\circ \bar W)$. Then $\mathrm{Law}((X^n,\bar X^n),(X,\bar X))\in\cpl(\pi^{\rho,n},\pi^\rho)$. Moreover, by the assumption of $L^p$-convergence,
    \begin{align}
        \E\!\left[\int_0^1|(X^n_t,\bar X^n_t)-(X_t,\bar X_t)|^p\dt\right]^\frac{1}{p}
        \le \E\!\left[\int_0^1|X^n_t-X_t|^p\dt\right]^\frac{1}{p}+\E\!\left[\int_0^1|\bar X^n_t-\bar X_t|^p\dt\right]^\frac{1}{p}
        \xrightarrow{n\to\infty}0,
    \end{align} 
    which verifies \eqref{eq:stability_cond_wp} and thus completes the proof. 
\end{proof}

Our second stability result enables us to approximate the adapted Wasserstein distance between laws of SDEs by approximating their coefficients. For this result we return to the set-up of continuous paths in $\Omega$; we write $\|\omega\|_\infty \coloneqq  \sup_{s\in[0,1]}|\omega_s|$, $\omega\in \Omega$, for the sup-norm and work under the following assumption.

\begin{assumption}\label{ass:stability}
Suppose that $x_0,\bar x_0 \in \R$ and $b, \bar b\colon [0, 1] \times \Omega \to \R$, $\sigma, \bar \sigma\colon [0, 1] \times \Omega \to \R_+$ satisfy the following:
	\begin{enumerate}[label = (\roman*)]
		\item $b,\bar b,\sigma,\bar\sigma$ are progressively measurable;
		\item for each $t\in [0, 1]$, the functions $b(t,\cdot),\bar b(t,\cdot),\sigma(t,\cdot),\bar\sigma(t,\cdot)$ are continuous w.r.t.\ $\|\cdot\|_\infty$;
		\item there exists $K>0$ such that, for all $t\in [0, 1]$, $\omega \in \Omega$,
			\begin{equation}\label{eq:linear-growth}
				|b(t,\omega)|\vee|\bar b(t,\omega)|\vee|\sigma(t,\omega)|\vee|\bar\sigma(t,\omega)|\leq K(1+\|\omega\|_\infty);
			\end{equation}
		\item there exist unique strong solutions of the SDEs \eqref{eq:sde-path-dep}.	
	\end{enumerate}
\end{assumption}

\begin{corollary}\label{cor:synchronous-stability} 
Let $(x_0,\bar x_0, b, \bar{b}, \sigma, \bar{\sigma})$ satisfy \Cref{ass:stability} and write $\mu, \nu$ for the laws of $X,\bar X$.
For $n\in\N$, consider $(x^n_0, \bar x^n_0,b^n,\bar b^n,\sigma^n,\bar\sigma^n)$ satisfying \Cref{ass:stability}.(i) and (iii), with a uniform slope constant $K$ in \eqref{eq:linear-growth}, and such that strong existence holds for \eqref{eq:sde-path-dep}; write $X^n,\bar X^n$ for a pair of solutions and $\mu^n, \nu^n$ for their laws.
Let $c \colon \mathbb R\times \mathbb R \to\mathbb R$ be continuous and satisfy \eqref{eq:p_growth_c}, and suppose that there exists some $\hat\rho$ as in \Cref{prop:stability_optimiser}.

Suppose that, as $n \to \infty$, $(x^n_0,\bar x^n_0) \to (x_0,\bar x_0)$ and the following convergence holds:
\begin{align}\label{eq:unifconvassump}
\|\omega^n - \omega \|_\infty \to 0 \implies (b^n,\bar b^n,\sigma^n,\bar\sigma^n)(t,\omega^n)\to (b,\bar b,\sigma,\bar\sigma)(t,\omega), \text{ for each }t \in [0, 1].
\end{align}
Then condition \eqref{eq:stability_cond_wp} is satisfied and the conclusions of \Cref{prop:stability_optimiser} hold true.
\end{corollary}

\begin{proof}
	Let $\rho$ be an admissible correlation process. Write $\pi^{\rho, n}$ for the joint law of $(X^n,\bar X^n)$, for $n \in \N$, and $\pi^\rho$ for the joint law of $(X,\bar X)$, when the corresponding SDEs are driven by a $\rho$-correlated Wiener process. By \Cref{prop:stab_abstract}, we have convergence of $\pi^{\rho,n}$ to $\pi^\rho$ in the $p$-Wasserstein distance on $\Pc(\Omega \times \Omega)$ with respect to the sup-norm. 
	Embedding $\Pc(\Omega\times\Omega)$ into $\Pc(\hat\Omega\times\hat\Omega)$, we thus have convergence of $\pi^{\rho,n}$ to $\pi^\rho$ in the $p$-Wasserstein distance on $\Pc(\hat\Omega\times\hat\Omega)$ with respect to the $L^p$-norm (as defined in \eqref{eq:distance_Wp_Lp}), which verifies \eqref{eq:stability_cond_wp} and thus completes the proof. 
\end{proof}

\begin{remark}\label{rem:corr_uniqueness}
Whenever $(W,\bar W)$ is a $\rho$-correlated Wiener process defined on some stochastic basis, under \Cref{ass:stability} (iv), one can uniquely construct a strong solution $(X,\bar X)$ of the system \eqref{eq:sde-path-dep} driven by $(W,\bar W)$ on the same stochastic basis.
We also note that \Cref{ass:stability} (iv) can be weakened to pathwise uniqueness only. Indeed, weak existence is guaranteed already by \Cref{ass:stability} (i)--(iii) and a classical result of Skorokhod (e.g.\ adapting the proof of \cite[Theorem 21.9]{Kall02} to coefficients with linear growth) and so the Yamada-Watanabe criterion \cite[Lemma 21.17]{Kall02} applies.
	Note finally that continuity of $b, \bar b, \sigma, \bar \sigma$ is implied by the convergence \eqref{eq:unifconvassump} and so we could drop \Cref{ass:stability} (ii) from our assumptions. We keep this assumption, however, to make it transparent that the stability result of \Cref{cor:synchronous-stability} cannot be applied to coefficients with discontinuities. 
\end{remark}

\subsection{Approximation of SDEs in adapted Wasserstein distance}

We next make use of \Cref{prop:bion-nadal-talay-bc} to prove a more general version of \Cref{thm:convergence} on the approximation of the laws of SDEs in adapted Wasserstein distance. In \Cref{prop:aw-conv-kr-sync}, we will apply this result to the convergence of the Knothe--Rosenblatt rearrangement to the synchronous coupling.
For $p \ge 1$, and $\pi,\pi^\prime\in \Pc(\Omega \times \Omega)$ with finite $p$\textsuperscript{th} moment, we define the adapted $p$-Wasserstein distance between $\pi$ and $\pi^\prime$ analogously to \eqref{eq:adapted_wasserstein}; specifically, defining the set of bicausal couplings, $\cplbc(\pi,\pi^\prime)$, analogously to \Cref{def:bicausal_couplings} when $\Omega\times\Omega$ is equipped with the product filtration, $\AW_p(\pi,\pi^\prime)$ is given by \eqref{eq:distance_Wp_Lp} when replacing the set $\cpl(\pi,\pi')$ by $\cplbc(\pi,\pi')$.

\begin{theorem}\label{thm:convergence-general}
	Suppose that for some $p \ge 1$, $b, \bar b\colon C([0, 1], \R) \to \R$, $\sigma, \bar \sigma\colon C([0, 1], \R) \to \R_+$ there exist $p$-integrable unique strong solutions $(X, \bar X)$ of \eqref{eq:sde-path-dep}. Suppose also that, for all $h > 0$, and $b^h, \bar b^h\colon C([0, 1], \R) \to \R$, $\sigma^h, \bar \sigma^h\colon C([0, 1], \R) \to \R$ there exist $p$-integrable unique strong solutions $(X^h, \bar X^h)$ of \eqref{eq:sde-path-dep}. Moreover, suppose that $X^h \to X$ and $\bar X^h \to \bar X$ in $L^p$.
	
	Then, for any bicausal coupling $\pi \in \cplba(\Law(X), \Law(\bar X))$, there exists a probability space supporting a correlated Wiener process $(W, \bar{W})$ such that $\pi$ is equal to the joint law of $(X, \bar X)$ driven by $(W, \bar W)$ and, for $\pi^h$ equal to the joint law of $(X^h, \bar X^h)$ driven by $(W, \bar W)$, we have
$
		\lim_{h \to 0}\AW_p(\pi^h, \pi) = 0
$.
\end{theorem}

\begin{remark}\label{rem:convergence-EM-aw}
	Under Lipschitz conditions on the coefficients of the SDE \eqref{eq:sde}, the Euler--Maruyama scheme \eqref{eq:approx} converges to the unique solution of \eqref{eq:sde} in $L^p$, for all $p \ge 1$ (see, e.g.\ \cite{KlPl92}), and so \Cref{thm:convergence-general} implies the result of \Cref{thm:convergence}. Under the same conditions, the monotone Euler--Maruyama scheme that we develop below in \Cref{def:monotone-em} also converges to the solution of \eqref{eq:sde} in $L^p$, for all $p \geq 1$; see \Cref{lem:convergence-truncated,prop:aw-conv-kr-sync}.
\end{remark}

\begin{proof}[Proof of \Cref{thm:convergence-general}]
	Take $\pi \in \cplba(\Law(X), \Law(\bar X))$. By \Cref{prop:bion-nadal-talay-bc}, there exists a correlated Wiener process $(W, \bar{W})$ such that $\pi$ is the joint law of $(X, \bar{X})$ driven by $(W, \bar W)$. Take $(X^h, \bar X^h)$ driven by $(W, \bar W)$. Then $\Law((X^{h}, \bar{X}^{h}), (X, \bar{X})) \in \cplba(\pi^h, \pi)$. Indeed, for $t \in [0, 1]$, and any bounded measurable $f\colon C([0, t], \R) \times C([0, t], \R) \to \R$,
	\begin{equation}
		\begin{split}
			&\E \!\left[f((X^h_s, \bar X^h_s)_{s \in [0, t]}) \mid \; \F^X_1 \otimes \F^{\bar X}_1 \right]\\
			& \qquad  = \E \!\left[ \E\!\left[f((X^h_s, \bar X^h_s)_{s \in [0, t]}) \mid \; (\F^X_t \otimes \F^{\bar X}_t) \vee \sigma\{(W_u - W_t, \bar W_u - \bar W_t) \colon u \in (t, 1]\}\right] \mid \; \F^X_1 \otimes \F^{\bar X}_1 \right]\\
			& \qquad = \E \!\left[ \E\!\left[f((X^h_s, \bar X^h_s)_{s \in [0, t]}) \mid \; \F^X_t \otimes \F^{\bar X}_t \right] \mid \; \F^X_1 \otimes \F^{\bar X}_1 \right]\\
			& \qquad = \E\!\left[f((X^h_s, \bar X^h_s)_{s \in [0, t]}) \mid \; \F^X_t \otimes \F^{\bar X}_t \right],
		\end{split}	
	\end{equation}
	where the second equality follows from the fact that $\sigma\{(W_u - W_t, \bar W_u - \bar W_t) \colon u \in (t, 1]\}$ is independent of $(X^h_s, \bar X^h_s)_{s \in [0, t]}$ and $\F^X_t \otimes \F^{\bar X}_t$. This implies causality in one direction. Since the roles of $(X^h, \bar X^h)$ and $(X, \bar X)$ are symmetric in this calculation, we have bicausality.
	
	Finally, using the convergence in $L^p$ combined with the same arguments as used in \Cref{rem:suff_cond_stab_lp}, we conclude that $\AW_p(\Law(X^{h}, \bar{X}^{h}), \pi)$ converges to zero, as $h\to 0$. 
\end{proof}

\section{The synchronous coupling: Properties and optimality}\label{sec:proofs}

We now return to the setting of SDEs with time-homogeneous Markovian coefficients. Specifically, we consider functions $b\colon \R \to \R$ and $\sigma \colon \R \to \R_+$ such that there exists a unique strong solution of the SDE \eqref{eq:sde}; we write $\mu^{b, \sigma}$ for its law. Without loss of generality, we suppose that all SDEs start from the same initial value $x_0$ and so we omit $x_0$ from any notation. 

In order to define the synchronous coupling between any two such laws, say $\mu=\mu^{b,\sigma}$ and $\nu=\mu^{\bar b,\bar\sigma}$, consider the system of SDEs, defined for $t \in [0, 1]$,
\begin{equation}\label{eq:sdes}
	\begin{split}
		\D X_t & = b(X_t)\D t + \sigma(X_t) \D W_t; \quad X_0 = x_0,\\
		\D \bar{X}_t & = \bar{b}(\bar{X}_t)\D t + \bar{\sigma}(\bar{X}_t)\D \bar{W}_t; \quad \bar{X}_0 = x_0.
	\end{split}
\end{equation}
We then define the \emph{synchronous coupling} $\pi^\sync_{\mu, \nu} \in \cplbc(\mu, \nu)$ as follows:\footnote{If one drops the assumption that $\sigma, \bar \sigma$ are both positive, one could recover the results of this paper by redefining the synchronous coupling to be the one induced by $(W, \bar W)$ with correlation $\sign(\sigma \bar \sigma)$, where $\sign(x)=1$ if $x\geq 0$ and $-1$ otherwise. This determines the synchronous coupling uniquely, even in situations where $\sigma\bar\sigma$ can become zero.} Set $\bar{W} = W$ and let $(X^\sync, \bar{X}^\sync)$ be the strong solution of \eqref{eq:sdes} driven by $(W, W)$. Then the \emph{synchronous coupling} is defined as the joint law
\begin{equation}
    \pi^\sync_{\mu, \nu}\coloneqq  \Law(X^\sync, \bar{X}^\sync).
\end{equation}
We will also refer to the coupling obtained via the above procedure with $\bar W=-W$ as the \emph{anti-synchronous coupling}, $\pi^{\async}_{\mu,\nu}$. 

The main aim of this section is to establish the optimality of the synchronous coupling for the adapted Wasserstein distance between laws of SDEs for which pathwise uniqueness holds and whose coefficients are continuous and have linear growth. 
We also establish this result for a particular class of coefficients which allows for discontinuities in the drift coefficient.
As part of our analysis, we also provide an optimality result for the Knothe--Rosenblatt rearrangement and establish the link to the synchronous coupling. 

\subsection{The Knothe--Rosenblatt rearrangement}\label{sec:kr}

The Knothe--Rosenblatt rearrangement (also known as the Knothe--Rosenblatt coupling or quantile transformation) was introduced independently by Rosenblatt \cite{Ro52} and Knothe \cite{Kn57}. This coupling can be seen as an extension of the monotone rearrangement between marginal laws on $\R$ to the case of coupling laws of $\R$-valued discrete-time processes. We illustrate the Knothe--Rosenblatt rearrangement in \Cref{fig:kr}. The aim here is to investigate its optimality properties. To this end, we first introduce some notation.

	For a probability measure $\mu$ on $\R$, the \emph{cumulative distribution function} $F_\mu\colon \R \to [0, 1]$, is given by 
	\begin{align}
		F_\mu(u)= \mu(-\infty, u],
		\quad u\in\R.
	\end{align} 
	The \emph{quantile function} $F_\mu^{-1}\colon [0,1]\to\R$ is defined as its left-continuous inverse; that is, $F_\mu^{-1}(y) = \inf\{u \in \R \colon F_\mu(u) \geq y\}$.
	Given two probability measures $\mu,\nu$ on $\R$, recall that $\mu$ is said to dominate $\nu$ in \emph{first order stochastic dominance} if $F_\mu(u)\le F_\nu(u)$, for all $u \in \R$. 

	Given a probability measure $\mu$ on $\R^n$, $n \in \N$, let $\mu_1$ be its marginal distribution onto the first component and write $\mu_{x_1, \dotsc, x_k}$, $x_1, \dotsc, x_k\in\R$, $k=1,\dots,n-1$, for the one-dimensional conditional distribution in the $(k+1)$\textsuperscript{th} coordinate given the first $k$ coordinates; that is
	\begin{align}\label{eq:disintegration_repr}
	\mu(\dx_1,\dots,\dx_n)=\mu_1(\dx_1)\mu_{x_1}(\dx_2)\dots\mu_{x_1,\dots,x_{n-1}}(\dx_n).
	\end{align}
	If $\mu$ defines a Markov process, then the transition kernels $\mu_{x_1, \dotsc, x_k}$, $k=1,\dots,n-1$, will only depend on $x_k$ but we keep writing out the full tuple for notational reasons. 
	
	Two functions $f, g\colon \R \to \R$ are called \emph{co-monotone} in each of the following three cases: $f$ and $g$ are both increasing, $f$ and $g$ are both decreasing, or one of $f$ and $g$ is constant and the other is arbitrary. Here, and throughout the paper, we use the terms increasing and decreasing in a weak sense; we do not require strict monotonicity.
	
	We are now ready to define the Knothe--Rosenblatt rearrangement:
	
\begin{definition}[Knothe--Rosenblatt rearrangement]\label{def:kr}
	Given probability measures $\mu, \nu$ on $\R^n$, let $U_1, \dotsc, U_n$ be independent uniform random variables on $[0, 1]$, define $X_1 = F_{\mu_1}^{-1}(U_1)$, $Y_1 = F_{\nu_1}^{-1}(U_1)$, and, for $k = 2, \dotsc, n$, define inductively the random variables
	\begin{equation}
		X_k = F^{-1}_{\mu_{X_1, \dotsc, X_{k - 1}}}(U_k), \quad Y_k = F^{-1}_{\nu_{Y_1, \dotsc, Y_{k - 1}}}(U_k).
	\end{equation}
	The \emph{Knothe--Rosenblatt rearrangement} between the marginals $\mu$ and $\nu$ is then given by 
	\[
	\pi^\kr_{\mu, \nu} \coloneqq \Law(X_1, \dotsc, X_n, Y_1, \dotsc, Y_n).
	\]
	For $n = 1$, $\pi^\kr_{\mu, \nu} = \Law(F_{\mu_1}^{-1}(U_1), F_{\nu_1}^{-1}(U_1))$ is known as the \emph{monotone rearrangement}.
\end{definition}

\begin{remark}\label{rem:monge-kr}
	If $\mu$ is absolutely continuous with respect to the Lebesgue measure, then $\pi^\kr_{\mu, \nu}$ is induced by the Monge map $(x_1, \dotsc, x_n) \mapsto T(x_1, \dotsc, x_n) = (T^1(x_1), T^2(x_2; x_1), \dotsc, T^n(x_n; x_1 \dotsc, x_{n-1}))$ given by $T^1(x_1) = F_{\nu_1}^{-1} \circ F_{\mu_1}(x_1)$ and, for $k = 2, \dotsc, n$,
	\begin{equation}
		T^k(x_k; x_1, \dotsc, x_{k-1}) = F^{-1}_{\nu_{T^1(x_1), \dotsc, T^{k - 1}(x_{k - 1}; x_1, \dotsc, x_{k - 2})}}\circ F_{\mu_{x_1, \dotsc, x_{k - 1}}}(x_k).
	\end{equation}
\end{remark}

\begin{remark}\label{rem:kr-increasing-map}
	The following condition is equivalent to the condition in \Cref{def:kr}: For independent uniform random variables $U_1, \dotsc, U_n$ on $[0, 1]$, we have the representation
	\begin{equation}
		\begin{split}
			X & = (T_1(U_1), T_2(U_2; X_1), \dotsc, T_n(U_n; X_1, \dotsc, X_{n - 1})),\\
			Y & = (S_1(U_1), S_2(U_2; Y_1), \dotsc, S_n(U_n; Y_1, \dotsc, Y_{n-1})),
		\end{split}
	\end{equation}
	where the functions $T_i, S_i$ are co-monotone in their first argument, for all $i = 1, \dotsc, n$. By composing each $T_i$ and $S_i$ in their first argument with suitable increasing functions, we could replace each $U_i$ by an independent normal random variable $Z_i\sim\mathcal N(0,1/n)$, for $i= 1, \dotsc, n$. This shows more clearly the resemblance between the synchronous coupling in continuous time and the Knothe--Rosenblatt rearrangement in discrete time. Much of what we do in this paper can be interpreted as justifying a convergence of sorts of the latter to the former when $n\to\infty$.
\end{remark}

We now consider the optimality properties of the Knothe--Rosenblatt rearrangement.
Suppose that the cost function $c \colon \R \times \R \to \R$ is \emph{submodular}\footnote{Submodular functions are also referred to as quasi-monotone or $L$-subadditive.}; i.e.
\begin{equation}\label{eq:L_superadditivity}
	c(x,y)+c(\bar x,\bar y)-c(x,\bar y)- c(\bar x,y)\le 0,
	\quad \textrm{for all $x\le \bar x$, $y\le \bar y$}.
\end{equation}
For probability measures $\mu, \nu$ on $\R^n$, $n \in \N$, we define the set $\cplba(\mu, \nu)$ of bicausal couplings analogously to \Cref{def:bicausal_couplings} and, for $p \geq 1$, we write $\mu \in \Pc_p(\R^n)$ if $\int_{\R^n}\sum_{k = 1}^{n} |x_k|^p \mu(\D x) < \infty$.

We relate the optimality of the Knothe--Rosenblatt rearrangement for a class of discrete-time bicausal transport problems to the concept of stochastic monotonicity.
For Markov processes, the notion of stochastic monotonicity goes back to \citet{Da68}. The following definition generalises this concept to arbitrary processes.
	
\begin{definition}[stochastic co-monotonicity]\label{def:stochastic_mon}
	A probability measure $\mu$ on $\R^n$ (or a stochastic process with law $\mu$) is \emph{stochastically increasing} (resp. decreasing) if, for $k = 1, \dotsc, n -1$, the map $(x_1,\dots,x_k)\mapsto\mu_{x_1,\dotsc,x_k}$ is increasing (resp.~decreasing) in first order stochastic dominance on $\Pc(\R)$ with respect to the product order on $\R^k$.\footnote{I.e.~$\mu_{\bar x_1,\dotsc,\bar x_k}$ dominates $\mu_{x_1,\dotsc, x_k}$ in first order stochastic dominance for any $(x_1, \dotsc, x_k), (\bar x_1, \dotsc, \bar x_k) \in \R^k$ with $x_i \leq \bar x_i$, for all $i = 1, \dots, k$.}
	
	We say that two probability measures on $\R^n$ are \emph{stochastically co-monotone} if they are both stochastically increasing, both stochastically decreasing, or one is both stochastically increasing and decreasing and the other is arbitrary. 
\end{definition}

The following optimality result forms the basis for our proof of optimality of the synchronous coupling in continuous time.
For the particular case of a two-period problem and a cost function of the form $c(x,y)=f(x-y)$, $x,y\in\R$, for some convex function $f$, the result was established in \cite[Corollary~2]{Ru85}; for the same type of cost functions and multi-period Markov processes, the result was established in \cite[Proposition~5.3]{BaBeLiZa17}; see, however, \Cref{rem:stoch-mono} below.

\begin{proposition}[optimality of Knothe--Rosenblatt]\label{prop:kr-discrete-optimality}
	For $p\ge 1$ and $n\in\mathbb{N}$, let $\mu,\nu \in \Pc_p(\R^n)$ be stochastically co-monotone.
	 Suppose that, for some $K > 0$, $c \colon \R\times\R\to\R$ satisfies 	\eqref{eq:p_growth_c} and \eqref{eq:L_superadditivity}.
	Then the Knothe--Rosenblatt rearrangement between $\mu$ and $\nu$
	is optimal for the bicausal transport problem
	\begin{align}\label{eq:optimisation_KR_problem}
		\inf_{\pi \in \cplbc(\mu, \nu)}\int \sum_{k=1}^nc(x_k,y_k)\pi(\dx,\dy).
	\end{align}

    Suppose additionally that there exists a strictly convex function $f \colon \R \to \R$ and a function $\tilde c \colon \R \times \R \to \R$ satisfying \eqref{eq:L_superadditivity} such that $c(x, y) = f(x - y) + \tilde c(x, y)$ for all $x, y \in \R$. Then the Knothe--Rosenblatt rearrangement is the unique optimiser of \eqref{eq:optimisation_KR_problem}.
\end{proposition}

Before proceeding to the proof, two remarks concerning classical optimal transport on the line are necessary.

\begin{remark}\label{rem:monotone-optimality}
	Let $p \geq 1$, $\alpha, \beta \in \Pc_p(\R)$, and suppose that, for some $K > 0$, $c \colon \R \times \R \to \R$ satisfies \eqref{eq:p_growth_c} and \eqref{eq:L_superadditivity}. Then, by \citet[Corollary 3]{Ru83}, the monotone rearrangement defined in \Cref{def:kr} attains the infimum in the optimal transport problem
		\begin{align}
			\inf_{\pi \in \cpl(\alpha, \beta)}\int c(x, y) \pi(\D x, \D y).
		\end{align}
		This characterisation of optimality in terms of monotonicity is inherently one-dimensional. To prove optimality of the Knothe--Rosenblatt rearrangement, we iterate this result over multiple time steps. Since we deduce the optimality of the synchronous coupling from the discrete-time result by a limiting argument, the proof of our continuous-time result is also specific to one-dimensional processes; see \Cref{prop:bnt}.
\end{remark}

\begin{remark}\label{rem:monotone-uniqueness}
    Let $f \colon \R \to \R$ be strictly convex and $c_f(x, y) := f(x - y)$, for any $x, y \in \R$. Let $p \geq 1$ and $\alpha, \beta \in \Pc_p(\R)$. Then, by \Cref{rem:monotone-optimality}, the monotone rearrangement defined in \Cref{def:kr} attains the infimum
    \begin{align}
		\inf_{\pi \in \cpl(\alpha, \beta)}\int f(x - y) \pi(\D x, \D y).
	\end{align}
    By \cite[Theorem 1]{BeGoMaSc08}, any coupling that attains this infimum is $c_f$-cyclically monotone. One can verify that, for $f$ strictly convex on $\R$, the only $c_f$-cyclically monotone coupling is the monotone rearrangement, and thus this is the \emph{unique} optimiser.
    
    Now let $c \colon \R \times \R \to \R$ take the form prescribed in the uniqueness part of \Cref{prop:kr-discrete-optimality}; i.e.\ $c=c_f+\tilde c$ with $c_f$ as defined above and $\tilde c$ submodular.
    By \Cref{rem:monotone-optimality}, the monotone rearrangement also attains the infima
    	\begin{align}
			\inf_{\pi \in \cpl(\alpha, \beta)}\int c(x, y) \pi(\D x, \D y) \quad \text{and} \quad \inf_{\pi \in \cpl(\alpha, \beta)}\int \tilde c(x, y) \pi(\D x, \D y).
		\end{align}
    Therefore
    \begin{align}
		\inf_{\pi \in \cpl(\alpha, \beta)}\int c(x, y) \pi(\D x, \D y) =  \inf_{\pi \in \cpl(\alpha, \beta)}\int f(x - y) \pi(\D x, \D y) + \inf_{\pi \in \cpl(\alpha, \beta)}\int \tilde c(x, y) \pi(\D x, \D y),
	\end{align}
    and uniqueness of the optimiser for the problem on the left-hand side follows.
\end{remark}

\begin{proof}[Proof of \Cref{prop:kr-discrete-optimality}]
	Similarly to \eqref{eq:disintegration_repr}, we identify each measure $\pi$ on $\R^n\times\R^n$ with its associated sequence of consecutive disintegration kernels
	\begin{align}\label{eq:disintegration}
\pi(\dx_1,\dots,\dx_n,\dy_1,\dots,\dy_n)
	=\pi_1(\dx_1,\dy_1)\pi_{x_1,y_1}(\dx_2,\dy_2)\dots
	\pi_{x_{1:n-1},y_{1:n-1}}
	(\dx_n,\dy_n),
		\end{align}
	where we use the shorthand notation $x_{1:k}$ for $x_1,\dots,x_k$, and analogously for the $y$-component. 
	By \cite[Proposition~5.1]{BaBeLiZa17}, $\pi\in\mathrm{Cpl_{bc}}(\mu,\nu)$ if and only if $\pi_1\in\mathrm{Cpl}(\mu_1,\nu_1)$ and $\pi_{x_{1:k},y_{1:k}}\in\mathrm{Cpl}(\mu_{x_{1:k}},\nu_{y_{1:k}})$, for $k=1,\dots,n-1$. 
	This allows us to solve the optimisation problem \eqref{eq:optimisation_KR_problem} by backward induction. Define $v_n(x_{1:n},y_{1:n})\equiv 0$ and, for $k=2,\dots,n$, define $v_{k-1}$ inductively by
\begin{align}\label{eq:KR_DPP}
v_{k-1}\!\left(x_{1:k-1},y_{1:k-1}\right)
=
\inf_{\pi\in\mathrm{Cpl}(\mu_{x_{1:k-1}},\nu_{y_{1:k-1}})}\int c(x_k,y_k)+v_{k}(x_{1:k},y_{1:k})\;\pi(\dx_k,\dy_k).
\end{align}
Then, by \cite[Proposition~5.2]{BaBeLiZa17}, the value of problem \eqref{eq:optimisation_KR_problem} is given by 
$v_0=
\inf_{\pi\in\mathrm{Cpl}(\mu_1,\nu_1)}\int c(x_1,y_1)+v_1(x_1,y_1)\;\pi(\dx_1,\dy_1)$,
and \eqref{eq:optimisation_KR_problem} admits a minimiser $\pi$, which is obtained from the sequence of locally optimal disintegration kernels via \eqref{eq:disintegration}. 

For $k=n$, the infimum in \eqref{eq:KR_DPP} is attained by the monotone rearrangement between $\mu_{x_1:x_{n-1}}$ and $\nu_{y_1:y_{n-1}}$, since $c$ satisfies \eqref{eq:L_superadditivity}; see \Cref{rem:monotone-optimality}. Therefore
\begin{align}
v_{n-1}(x_{1:n-1},y_{1:n-1})
=
\int_0^1 c\!\left(F^{-1}_{\mu_{x_1:x_{n-1}}}(u),F^{-1}_{\nu_{y_1:y_{n-1}}}(u)\right)\di u.
\end{align}

For $k=n-1$, the assumption of stochastic monotonicity gives that $x_{n-1}\mapsto F^{-1}_{\mu_{x_1:x_{n-1}}}(u)$ and $y_{n-1}\mapsto F^{-1}_{\nu_{y_1:y_{n-1}}}(u)$ are co-monotone for any $x_{1:n-2}$, $y_{1:n-2}$ and $u \in [0, 1]$. We also observe that \eqref{eq:L_superadditivity} is a linear constraint and that, if $c$ satisfies \eqref{eq:L_superadditivity} and $f, g \colon \R \to \R$ are co-monotone, then $(x,y)\mapsto c(f(x),g(y))$ also satisfies \eqref{eq:L_superadditivity}. Thus $(x_{n-1},y_{n-1})\mapsto c(x_{n-1},y_{n-1})+v_{n-1}(x_{1:n-1},y_{1:n-1})$ satisfies \eqref{eq:L_superadditivity} for any $x_{1:n-2}$, $y_{1:n-2}$.
As a consequence, the monotone rearrangement again attains the infimum on the right-hand side of \eqref{eq:KR_DPP}, and so
\begin{align}
v_{n-2} & (x_{1:n-2},y_{1:n-2})
=
\int_0^1 \bigg\{c\!\left(F^{-1}_{\mu_{x_1:x_{n-2}}}(s),F^{-1}_{\nu_{y_1:y_{n-2}}}(s)\right)\\
+&
\int_0^1 c\!\left(F^{-1}_{\mu_{x_1:x_{n-2},x_{n-1}}}(u),F^{-1}_{\nu_{y_1:y_{n-2},y_{n-1}}}(u)\right)\di u\Big|_{(x_{n-1},y_{n-1})=\!\left(F^{-1}_{\mu_{x_1:x_{n-2}}}(s),F^{-1}_{\nu_{y_1:y_{n-2}}}(s)\right)}\bigg\}\di s.
\end{align}

For $k=n-2$, the assumption of stochastic monotonicity gives that $x_{n-2}\mapsto \mu_{x_1:x_{n-2}}$ and $y_{n-2}\mapsto \nu_{y_1:y_{n-2}}$ as well as $(x_{n-2},x_{n-1})\mapsto \mu_{x_1:x_{n-2},x_{n-1}}$ and $(y_{n-2},y_{n-1})\mapsto \nu_{y_1:y_{n-2},y_{n-1}}$ are co-monotone for any $x_{1:n-3}$, $y_{1:n-3}$. Using once again the fact that \eqref{eq:L_superadditivity} is a linear constraint which is preserved under compositions with co-monotone functions, we have that $(x_{n-2},y_{n-2})\mapsto c(x_{n-2},y_{n-2})+v_{n-2}(x_{1:n-2},y_{1:n-2})$ satisfies \eqref{eq:L_superadditivity}, and so the monotone rearrangement attains the infimum on the right-hand side of \eqref{eq:KR_DPP}. 

By induction, the analogous conclusion holds for $k=1,\dots,n-3$, and we conclude that the Knothe--Rosenblatt rearrangement attains the infimum in \eqref{eq:optimisation_KR_problem}.

To show uniqueness of the optimal coupling, first note that, for any coupling $\pi$ that attains the infimum in \eqref{eq:optimisation_KR_problem}, the disintegration kernel $\pi_{x_{1:k}, y_{1:k}}$ attains the infimum $v_{k - 1}$ in \eqref{eq:KR_DPP}, for each $k = 2, \dotsc, n$, and $\pi_1$ attains the infimum in $v_0$. Thus it suffices to prove uniqueness of the optimiser for each $v_{k - 1}$, $k = 1, \dotsc, n$.

Suppose that $c(x, y) = f(x - y) + \tilde c(x, y)$, for all $x, y \in \R$, where $f$ is strictly convex and $\tilde c$ satisfies \eqref{eq:L_superadditivity}. Then \Cref{rem:monotone-uniqueness} directly implies uniqueness of the optimiser for the infimum in $v_{n - 1}$. Iterating as above, we see that, for each $k = 1, \dotsc, n$, $v_{k -1}$ takes the form required to apply \Cref{rem:monotone-uniqueness} and thus we conclude that the Knothe--Rosenblatt rearrangement is the unique optimiser for \eqref{eq:optimisation_KR_problem}.
\end{proof}

\begin{remark}\label{rem:stoch-mono}
	For Markov processes the assumption of stochastic co-monotonicity reduces to requiring $x_k\mapsto\mu_{x_1,\dots,x_k}$ and $x_k\mapsto\nu_{x_1,\dots,x_k}$ to be co-monotone, $k=1,\dots,n-1$, (since $\mu_{x_1,\dots,x_k}$ only depends on $x_k$). 
	In general, however, this property is not sufficient to ensure optimality of the Knothe--Rosenblatt rearrangement. 
	Indeed, consider the following marginals which satisfy this property but are not stochastically co-monotone in the sense of \Cref{def:stochastic_mon}:
	$\mu=\frac{1}{4}(\delta_{(1,2,7)}+\delta_{(1,0,5)}+\delta_{(-1,0,-5)}+\delta_{(-1,-2,-7)})$
	and 
	$\nu=\frac{1}{4}(\delta_{(1,2,-5)}+\delta_{(1,0,-7)}+\delta_{(-1,0,7)}+\delta_{(-1,-2,5)})$.
	For the cost $c(x,y)=|x-y|$, the Knothe--Rosenblatt rearrangement induces a cost of $12$, while one can find another bicausal coupling that induces a cost of $4$.
	This example reveals that the conclusion of \cite[Proposition~5.3]{BaBeLiZa17} does not hold under the stated assumptions. In \Cref{prop:kr-discrete-optimality} above, we correct and extend this result.
\end{remark}

\begin{remark}
	A well-studied example of a cost function satisfying the above conditions is given by $c(x,y) = |x-y|^p$, $x,y\in \R$, for some $p \geq 1$. In this case, for $\mu, \nu \in \Pc_p(\R^n)$, the infimum in \Cref{prop:kr-discrete-optimality} is the \emph{discrete-time adapted Wasserstein distance}
	\begin{equation}
		\AW_p^p(\mu, \nu) = \inf_{\pi \in \cplba(\mu, \nu)} \int \sum_{k = 1}^n \!\left | x_k - y_k \right |^p \pi(\D x, \D y);
	\end{equation}
	cf.~\cite[Equation~(6)]{BaBaBeEd19b}.
\end{remark}

\begin{remark}\label{rem:kr-dpp-proof}
	The proof of \Cref{prop:kr-discrete-optimality} can be seen as a discrete-time analogue of the proof of optimality of the synchronous coupling between one-dimensional diffusions in \cite{BiTa19}. Indeed, both proofs are based on dynamic programming arguments.
	More precisely, under sufficient regularity assumptions, the crucial observation in the verification argument of \cite{BiTa19} is that the second order cross-derivative of the value function is negative, while the algebraic analogue \eqref{eq:L_superadditivity} of this condition appears in the induction step in the proof of \Cref{prop:kr-discrete-optimality}.
\end{remark}
	
\begin{remark}\label{rem:stoch-mono-fdd}
	Note that the finite-dimensional distributions of any one-dimensional continuous strong Markov process are stochastically increasing, as was shown in \citet[Proposition~5.2]{BePaSc22} by use of a coupling argument originating from \citet{Ho98c}. When such a strong Markov process is the solution of an SDE, our methods give an alternative proof of this fact; see \Cref{rem:stoch-mono-fdd-new}.
\end{remark}

\subsection{A monotone numerical scheme}\label{sec:numerical-scheme}

In this section, we seek to discretise the SDEs \eqref{eq:sdes} via a numerical scheme which is stochastically monotone; in light of \Cref{rem:suff_cond_stab_lp} and \Cref{prop:kr-discrete-optimality}, this will lead to a discrete-time bicausal transport problem, for which an optimiser is known, and whose value converges to that of our continuous-time problem. 
We make the following observation.

\begin{remark}\label{rem:em}
	Recall the Euler--Maruyama scheme for the SDE \eqref{eq:sde} with coefficients $b\colon \R \to \R$, $\sigma \colon \R \to \R_+$ driven by a Wiener process $W$, as defined in \eqref{eq:approx}:
	
	For $N \in \N$, let $h = 1/N$ and $X^h_0 = x_0$. Then, for each $k = 0, \dotsc, N - 1$ and $t \in (kh, (k + 1)h]$, define
	\begin{equation}\label{eq:em}
		X^h_t = X^h_{kh} + (t - kh) b(X^h_{kh}) + \sigma(X^h_{kh}) (W_t - W_{kh}).
	\end{equation}
	We call the process $(X^h_t)_{t \in [0, 1]}$ the \emph{Euler--Maruyama scheme} and refer to $(X^h_{kh})_{k \in \{0, \dotsc, N - 1\}}$ as the \emph{discrete-time Euler--Maruyama scheme}.	
	
	Now suppose that $b$ is Lipschitz, with Lipschitz constant $C$, and that $\sigma$ is \emph{constant}. Then, for $h < C^{-1}$, the process $(X^h_{kh})_{k \in \{0, \dotsc, N - 1\}}$ is stochastically increasing. Indeed the function $x \mapsto x + h b(x)$ is increasing and so, for $a \in \R$ and $x < x^\prime$,
$$\P\!\left[x + h b(x) + \sigma (W_{(k+1)h} - W_{kh}) \leq a\right] \geq \P \!\left[x^\prime + h b(x^\prime) + \sigma(W_{(k + 1)h} - W_{kh})\le a\right].$$
\end{remark}

When we take a non-constant diffusion coefficient $\sigma$, the above discrete-time Euler--Maruyama scheme may no longer be stochastically monotone. We therefore define the following variant of the Euler--Maruyama scheme, in which the Brownian increments are truncated, such that we recover the desired stochastic monotonicity.

Let $W$ be a standard one-dimensional Wiener process, let $N \in \N$, $h = 1/N$, and define the truncation level $A_h \coloneqq  2\sqrt{- h \log h}$. Let $W^h_0=0$. For each $k = 0, 1, \dotsc$, define the stopping time $\tau^h_k$ to be the first time after $kh$ that the Brownian increment $(W_\cdot - W_{kh})$ leaves the interval $(-A_h, A_h)$, and define \begin{align}
 			W^h_t\coloneqq  W^h_{kh} + (W_\cdot -W_{kh})_{t \wedge \tau^h_k}, \quad \text{for} \quad t \in (kh, (k + 1)h].
	\end{align}

\begin{definition}[monotone Euler--Maruyama scheme]\label{def:monotone-em}
	Consider the SDE \eqref{eq:sde} for some coefficients $b\colon \R \to \R$, $\sigma \colon \R \to \R_+$ and a Wiener process $W$. Fix $N \in \N$, $h = \frac{1}{N}$, and define $W^h$ as above.
	Let $X^h_0 = x_0$. Then, for each $k = 0, \dotsc, N - 1$ and $t \in (kh, (k + 1)h]$, define
\begin{equation}
	X^h_t\coloneqq  X^h_{kh} + (t - kh) b(X^h_{kh}) + \sigma(X^h_{kh}) (W^h_t - W^h_{kh}).
\end{equation}
We call the process $(X^h_t)_{t \in [0, 1]}$ the \emph{monotone Euler--Maruyama scheme} and refer to $(X^h_{kh})_{k \in \{0, \dotsc, N - 1\}}$ as the \emph{discrete-time monotone Euler--Maruyama scheme}.
\end{definition}

We now verify that, for Lipschitz coefficients and sufficiently small $h > 0$, the discrete-time monotone Euler--Maruyama scheme is stochastically monotone.

\begin{lemma}\label{lem:truncated-increasing-kernels}
	Suppose that the coefficients $b$ and $\sigma$ in \eqref{eq:sde} are Lipschitz. Then, for sufficiently small $h > 0$, the discrete-time monotone Euler--Maruyama scheme $(X^h_{kh})_{k \in \{0, \dotsc, N - 1\}}$ for \eqref{eq:sde} is stochastically increasing.
\end{lemma}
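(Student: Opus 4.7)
The plan is to exploit the Markovian structure of the scheme and to reduce the first-order-stochastic-dominance property to the pathwise monotonicity of a single step map, which will be a consequence of the Lipschitz assumptions together with the fact that the truncation level $A_h=4\sqrt{-h\log h}$ tends to zero as $h\to 0$.

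First, I would observe that the discrete-time monotone Euler--Maruyama scheme is Markov: by the construction of $W^h$ via stopping times applied to the increments of $W$ on each interval $[kh,(k+1)h]$, the increment $Z_k := W^h_{(k+1)h}-W^h_{kh}$ is $\sigma(W_t-W_{kh}:t\in[kh,(k+1)h])$-measurable and hence independent of $\F^W_{kh}$; moreover $|Z_k|\le A_h$ almost surely. Consequently, the conditional cumulative distribution function satisfies
\begin{equation}
F_{\mu^h_{x_0,x_1,\dots,x_k}}(a) \;=\; \P\bigl[\,x_k+hb(x_k)+\sigma(x_k)Z_k\le a\,\bigr], \qquad a\in\R,
\end{equation}
and therefore depends only on $x_k$, not on the earlier coordinates.

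The key step is then to show that, for every $y\in[-A_h,A_h]$, the map
\begin{equation}
\Phi_y: x\longmapsto x+hb(x)+\sigma(x)y
\end{equation}
is (non-strictly) increasing on $\R$ for all sufficiently small $h>0$. Letting $L_b$ and $L_\sigma$ be the Lipschitz constants of $b$ and $\sigma$, for $x<x'$ one has
\begin{equation}
\Phi_y(x')-\Phi_y(x) \;\ge\; (x'-x)-hL_b(x'-x)-L_\sigma|y|(x'-x) \;\ge\; (x'-x)\bigl(1-hL_b-L_\sigma A_h\bigr).
\end{equation}
Since $A_h=4\sqrt{-h\log h}\to 0$ as $h\to 0^+$, we have $hL_b+L_\sigma A_h<1$ for all sufficiently small $h$, so $\Phi_y$ is increasing for every $y\in[-A_h,A_h]$.

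Finally, I would translate this pathwise monotonicity into first-order stochastic dominance. Fix $x<x'$. Since $|Z_k|\le A_h$ almost surely, the inclusion
\begin{equation}
\{\omega:\Phi_{Z_k(\omega)}(x')\le a\}\;\subseteq\;\{\omega:\Phi_{Z_k(\omega)}(x)\le a\}
\end{equation}
holds for every $a\in\R$, which yields $F_{\mu^h_{\dots,x'}}(a)\le F_{\mu^h_{\dots,x}}(a)$; this is precisely the condition that $x_k\mapsto F_{\mu^h_{\dots,x_k}}(a)$ be decreasing, i.e.\ that $(X^h_{kh})_{k=0,\dots,N}$ is increasing in first order stochastic dominance. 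The only substantive issue is verifying the inequality on $\Phi_y$ for Lipschitz (non-differentiable) coefficients, but the straightforward bound above handles this without invoking derivatives, so I do not anticipate a real obstacle.
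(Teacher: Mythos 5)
Your proposal is correct and follows essentially the same route as the paper: both arguments rest on the pathwise bound $\Phi_y(x')-\Phi_y(x)\ge (x'-x)\bigl(1-hL_b-L_\sigma A_h\bigr)$ for $|y|\le A_h$, together with $A_h\to 0$, to make the one-step map monotone and hence the conditional kernels ordered in first order stochastic dominance. Your version merely spells out the Markov/independence structure and the event-inclusion step that the paper leaves implicit.
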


\begin{proof} 
	Let $x, x^\prime \in \R$ such that $x < x^\prime$ and let $k\in\{0,\dots,N-2\}$. Define the random variables $Y = x + h b(x) + \sigma(x)(W^h_{(k + 1)h} -W^h_{kh})$ and $Y^\prime = x^\prime + h b(x^\prime) + \sigma(x^\prime)(W^h_{(k+1)h} - W^h_{kh})$. Then, letting $C_0$ and $C_1$ be the Lipschitz constants of $b$ and $\sigma$, respectively, and using the bound on the truncated Brownian increment, we have
	\begin{equation}
		Y^\prime - Y \geq (1 - h C_0 - A_h C_1)(x^\prime - x).
	\end{equation}
	Noting that $\lim_{h \to 0} A_h = 0$, we can choose $h$ sufficiently small that $1 - h C_0 - A_h C_1 > 0$, and conclude that we have the desired ordering, in first order stochastic dominance, of $Y$ and $Y^\prime$.
\end{proof}

Combined with \Cref{prop:kr-discrete-optimality}, \Cref{lem:truncated-increasing-kernels} implies that, for the adapted Wasserstein distance between the laws of two discrete-time monotone Euler--Maruyama schemes, the Knothe--Rosenblatt rearrangement is an optimiser, when all coefficients are Lipschitz.
Next we show that, for two SDEs driven by a common Wiener process, the joint law of the discrete-time monotone Euler--Maruyama schemes coincides with the Knothe--Rosenblatt rearrangement between the laws of the two schemes.
	
	\begin{lemma}\label{lem:kr-joint-law}
		Fix a Wiener process $W$ and consider the SDEs \eqref{eq:sdes} driven by the common Wiener process $W$ --- i.e.\ $\bar{W} = W$ in \eqref{eq:sdes}. 
		For $h > 0$, let $X^h, \bar{X}^h$ be the associated discrete-time monotone Euler--Maruyama schemes, and write $\mu^h, \nu^h$ for their respective laws. Then the joint law $\Law(X^h, \bar{X}^h)$ is equal to the Knothe--Rosenblatt rearrangement $\pi^\kr_{\mu^h, \nu^h}$ between $\mu^h$ and $\nu^h$.
	\end{lemma}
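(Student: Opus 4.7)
The plan is to verify that the joint law $\Law(X^h, \bar X^h)$ matches the representation in \Cref{rem:kr-increasing-map}, by exhibiting, at each step, a common random driver that acts on both processes through maps that are co-monotone in the driver.

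First I would introduce the truncated Brownian increments $\Delta W^h_k := W^h_{(k+1)h} - W^h_{kh}$, for $k = 0, \ldots, N-1$, and observe two things: by the strong Markov property of $W$ and the piecewise definition of $W^h$, these increments are i.i.d.\ and $\Delta W^h_k$ is independent of $\F^W_{kh}$; and, since $\sigma, \bar\sigma > 0$, the one-step maps
\begin{equation*}
	f_k(w; x) := x + h b(x) + \sigma(x)\, w, \qquad g_k(w; y) := y + h \bar b(y) + \bar\sigma(y)\, w
\end{equation*}
are strictly increasing in $w$ for every $x, y \in \R$. By construction of the monotone Euler--Maruyama scheme, one has the identities $X^h_{(k+1)h} = f_k(\Delta W^h_k;\, X^h_{kh})$ and $\bar X^h_{(k+1)h} = g_k(\Delta W^h_k;\, \bar X^h_{kh})$.

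Next I would convert the common driver $\Delta W^h_k$ into an independent uniform random variable $U_{k+1}$ on $[0, 1]$, which is needed to match the setting of \Cref{rem:kr-increasing-map}. Let $F$ be the common distribution function of $\Delta W^h_0$; since $F$ is continuous except at the two atoms $\pm A_h$, one can set $U_{k+1} := F(\Delta W^h_k-)+V_{k+1}\,(F(\Delta W^h_k)-F(\Delta W^h_k-))$ for $V_{k+1}$ uniform on $[0,1]$, independent of $W$ and of the $V_j$'s for $j\neq k+1$ (enlarging the probability space if necessary). Then the $U_{k+1}$'s are i.i.d.\ uniform, $\Delta W^h_k = F^{-1}(U_{k+1})$ almost surely, and $U_{k+1}$ is independent of $\F^W_{kh}$. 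Composing with the increasing quantile function $F^{-1}$, the representations become
\begin{equation*}
	X^h_{(k+1)h} = f_k(F^{-1}(U_{k+1});\, X^h_{kh}), \qquad \bar X^h_{(k+1)h} = g_k(F^{-1}(U_{k+1});\, \bar X^h_{kh}),
\end{equation*}
and both right-hand sides are increasing functions of $U_{k+1}$. Since $X^h_{kh}$ is a function of $U_1, \ldots, U_k$ (by induction) and likewise for $\bar X^h_{kh}$, this exactly matches the form of the Knothe--Rosenblatt representation in \Cref{rem:kr-increasing-map}, with $T_{k+1}$ and $S_{k+1}$ co-monotone (indeed both increasing) in their first argument.

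Finally I would conclude that $\Law(X^h, \bar X^h) = \pi^\kr_{\mu^h, \nu^h}$, since the joint law has been written in the canonical Knothe--Rosenblatt form with the correct marginals $\mu^h$ and $\nu^h$ (these are recovered by applying only $(f_k)$ or only $(g_k)$ along the same sequence of drivers, respectively). The only technical obstacle is the handling of the atoms of $F$ at $\pm A_h$, but this is resolved cleanly by the randomization step above. An alternative, essentially equivalent route is to bypass the uniform variables entirely: bi-causality of $\Law(X^h, \bar X^h)$ follows directly from adaptedness to $\F^W$ and independence of future truncated increments; and the step-wise representation as increasing functions of a common variable shows that the conditional distribution of $(X^h_{(k+1)h}, \bar X^h_{(k+1)h})$ given the pasts is the monotone coupling of its conditional marginals, which is the defining property of the Knothe--Rosenblatt rearrangement by induction.
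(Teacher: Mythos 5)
Your proposal is correct and follows essentially the same route as the paper's proof: both rest on the observation that $X^h$ and $\bar X^h$ are driven by the same i.i.d.\ truncated increments $\Delta_k W^h$ through one-step maps that are (co-)monotone in the increment, so the characterisation of \Cref{rem:kr-increasing-map} applies. Your distributional-transform step handling the atoms of the truncated increment at $\pm A_h$ is a careful treatment of a detail the paper leaves implicit (it applies the remark directly with the increments in place of uniforms), and note only that strict monotonicity is not needed --- weak monotonicity, covering points where $\sigma$ or $\bar\sigma$ vanish, already suffices under the paper's definition of co-monotonicity.
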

	
	\begin{proof}
		Let $N \in \N$ and $h = 1/N$. Fix $k \in \{1, \dotsc, N - 1\}$ and write $\Delta W^h_{k+1}$ for the truncated increment $W^h_{(k + 1)h} - W^h_{kh}$ of the Wiener process. Then, since $\sigma, \bar{\sigma}$ are non-negative functions, the maps $\Delta W^h_{k+1} \mapsto X^h_{(k + 1)h}$, given by $X^h_{(k+1)h} = X^h_{kh} + h b(X^h_{kh}) + \sigma(X^h_{kh}) \Delta W^h_{k+1}$, and $\Delta W^h_{k+1} \mapsto \bar{X}^h_{(k+1)h}$, given by $\bar X^h_{(k+1)h} = \bar X^h_{kh} + h\bar b(\bar X^h_{kh}) + \bar \sigma(\bar X^h_{kh}) \Delta W^h_{k+1}$, are both increasing. Since we take the same random variables $\Delta W^h_{k+1}$, $k = 1, \dotsc, N-1$, for both $X^h$ and $\bar{X}^h$, the characterisation of the Knothe--Rosenblatt rearrangement given in \Cref{rem:kr-increasing-map} implies that the joint law of $X^h$ and $\bar{X}^h$ coincides with the Knothe--Rosenblatt rearrangement between $\mu^h$ and $\nu^h$.
	\end{proof}

	We finally establish that the monotone scheme converges in the $L^p$-norm to the solution of the SDE; the proof is deferred to \Cref{app:convergence}. 

\begin{proposition}\label{lem:convergence-truncated}
	Let $p\ge 1$.
	Suppose that the coefficients $b$ and $\sigma$ in \eqref{eq:sde} are Lipschitz and let $X$ denote its unique strong solution.
	Consider the associated monotone Euler--Maruyama scheme $X^h$ given in \Cref{def:monotone-em}. 
	Then we have the $L^p$-convergence
	\begin{equation}
		\lim_{h \to 0}\E \!\left[\sup_{0 \leq t \leq 1} |X^h_t - X_t|^p \right] = 0,
	\end{equation}
	and the associated discrete-time monotone Euler--Maruyama scheme converges to $X$ in $L^p$, in the sense that
\[
\lim_{h\to 0}
\E \!\left[\sum_{k = 0}^{N - 1} \int_{kh}^{(k+1)h}\lvert X^h_{kh} - X_t \rvert^p \D t\right]
=0.
\]
\end{proposition}

\begin{remark}
	\citet{MiReTr02} proved $L^p$-convergence of a Milstein scheme with a truncated Gaussian driving noise, for SDEs with sufficiently regular coefficients. A fully implicit Euler--Maruyama scheme with the same truncated noise was also introduced in \cite{MiReTr02}, but no general convergence result was given.	
	Seemingly independently, \citet{LiPa22} proved $L^p$-convergence of an Euler--Maruyama scheme with truncated Brownian increments for McKean--Vlasov equations. Since the first version of the present article appeared online, \citet{JoPa23} also proved the $L^p$-convergence of a similar Euler--Maruyama scheme with truncated Brownian increments, allowing also for time-dependent coefficients. In \cite{LiPa22,JoPa23}, the authors exploit a monotonicity property similar to \Cref{lem:truncated-increasing-kernels} in order to study the (monotone) convex ordering of continuous-time processes.
	
	We note that, as in \cite{MiReTr02}, the truncation level $A_h$ is chosen in such a way that the moments of the error introduced by the truncation decay sufficiently fast as $h \to 0$.
\end{remark}

\begin{remark}\label{rem:stoch-mono-fdd-new}
    Supposing that the coefficients of the SDE \eqref{eq:sde} are Lipschitz, the unique strong solution $X$ is a Feller process and so, as noted in \Cref{rem:stoch-mono-fdd}, the finite-dimensional distributions of $X$ are stochastically increasing. We sketch an alternative proof of this fact based on approximation by the monotone Euler--Maruyama scheme. For $h$ sufficiently small, the discrete-time monotone Euler--Maruyama scheme $X^h$ is stochastically increasing by \Cref{lem:truncated-increasing-kernels}. For $t_1, t_2 \in [0,1] \cap \Q$ with $t_1 < t_2$, choose $h$ such that $t_1 = k_1 h$, $t_2 = k_2 h$, for some $k_1, k_2 \in \N_0$. Then the transition kernels for $X^h$ from $t_1$ to $t_2$ are also stochastically increasing. The uniform-in-time $L^p$-convergence of $X^h$ to $X$ shown in \Cref{lem:convergence-truncated} implies convergence of finite-dimensional distributions and therefore also of transition kernels. Refining the discretisation grid of $X^h$ appropriately, we conclude that the transition kernels for $X$ from $t_1$ to $t_2$ are stochastically increasing, as required.
\end{remark}

We are now ready to prove the following result, which implies the conclusion of \Cref{thm:synchronous_optimal} when the coefficients in \eqref{eq:sdes} are Lipschitz. 

\begin{proposition}\label{prop:bnt}
	Let $b, \bar{b}\colon \R \to \R$ and $\sigma, \bar{\sigma}\colon \R \to \R_+$ be Lipschitz continuous.
	For $N \in \N$, set $h = 1/N$ and let $\mu^h, \nu^h$ be the laws of the discrete-time monotone Euler--Maruyama schemes for \eqref{eq:sdes}.
	Then, for $c \colon \mathbb R\times \mathbb R \to\mathbb R$ continuous and satisfying \eqref{eq:p_growth_c} and \eqref{eq:L_superadditivity} for some $p \ge 1$ and $K>0$,
\[
\lim_{h\to 0}\;
\inf_{\pi\in\cplbc\!\left(\mu^h,\nu^h\right)}h\int\sum_{k=0}^{N - 1} c\!\left(x_k, \bar x_k\right)\di\pi\;
=
\inf_{\pi\in\cplbc(\mu^{b, \sigma},\mu^{\bar b, \bar \sigma})}\iint_0^1c\!\left(\omega_t,\bar{\omega}_t\right)\dt\di\pi;
\]
	moreover, the infimum on the right hand side is attained by the synchronous coupling.
\end{proposition}

\begin{remark}\label{rem:lipschitz-well-defined}
	Note that, under the Lipschitz conditions on the coefficients, there exist $p$-integrable unique strong solutions of the SDEs \eqref{eq:sdes}, for $p \geq 1$; see \Cref{rem:ito-strong-solution}. Also, according to \Cref{lem:truncated-scheme-l2-bound}, the associated monotone Euler--Maruyama schemes are bounded in $L^p$, $p \geq 1$. The bicausal optimal transport problems in the statement of \Cref{prop:bnt} are therefore well-defined.
\end{remark}

\begin{proof}[Proof of \Cref{prop:bnt}]
Let $X^h, \bar X^h \colon \Omega \to \hat \Omega$ be measurable maps defining constant interpolations of the monotone Euler--Maruyama schemes for \eqref{eq:sdes}. That is, for any $k \in \{0, \dotsc, N - 1\}$ and $t \in [kh, (k + 1)h)$, and for any Wiener process $W$, $(X^h \circ W)_t$ and $(\bar X^h \circ W)_t$ coincide with the respective monotone Euler--Mauryama schemes driven by $W$ at time $kh$. Then, for any correlated Wiener process $(W, \bar W)$, defining $\pi \coloneqq \Law(((X^h \circ W)_{kh})_{k \in \{0, \dotsc, N - 1\}}, ((\bar X^h \circ \bar W)_{kh})_{k \in \{0, \dotsc, N - 1\}})$, we have
\begin{equation}\label{eq:bicausal-correlated-proof}
	\int h \sum_{k = 0}^{N - 1} c(x_k, y_k) \pi (\D x, \D y) = \E\!\left[\int_0^1 c((X^h \circ W)_t, (\bar X^h \circ \bar W)_t) \D t\right],
\end{equation}
and $\pi \in \cplbc(\mu^h, \nu^h)$. By \Cref{lem:truncated-increasing-kernels} and \Cref{prop:kr-discrete-optimality}, for $h > 0$ sufficiently small, the Knothe--Rosenblatt rearrangement $\pi^\kr_{\mu^h, \nu^h}$ attains the infimum
\begin{equation}\label{eq:inf-bc-proof}
	\inf_{\pi\in\cplbc(\mu^h,\nu^h)}\int h \sum_{k = 0}^{N - 1} c(x_k,y_k) \pi(\D x, \D y).
\end{equation}
Moreover, by \Cref{lem:kr-joint-law}, $\pi^\kr_{\mu^h,\nu^h} = \Law(((X^h \circ W)_{kh})_{k \in \{0, \dotsc, N - 1\}}, ((\bar X^h \circ W)_{kh})_{k \in \{0, \dotsc, N - 1\}})$. Hence the perfectly correlated Wiener process $(W, W)$ attains the infimum taken over all correlated Wiener processes on the right-hand side of \eqref{eq:bicausal-correlated-proof}, and this infimum coincides with \eqref{eq:inf-bc-proof}.

By \Cref{lem:convergence-truncated}, for any Wiener process $W$, $X^h \circ W$ converges in $L^p$ to $X^{b, \sigma}$ driven by $W$, and analogously for $\bar X^h$. Hence we can conclude by \Cref{rem:suff_cond_stab_lp}.
\end{proof}

The above result shows that the continuous-time adapted Wasserstein distance between laws of solutions of \eqref{eq:sdes} with Lipschitz coefficients is the limit of discrete-time adapted Wasserstein distances, each of which are attained by the Knothe--Rosenblatt rearrangement; meanwhile, the synchronous coupling attains the continuous-time adapted Wasserstein distance. In fact, we also have convergence of the optimisers. In view of \Cref{thm:convergence-general} and the $L^p$-convergence that we prove in \Cref{lem:convergence-truncated}, our particular choice of continuous-time scheme leads to the following convergence of the Knothe--Rosenblatt rearrangement to the synchronous coupling in the adapted Wasserstein distance. For this reason, we argue that the synchronous coupling can be viewed as a continuous-time analogue of the Knothe--Rosenblatt rearrangement.

\begin{proposition}\label{prop:aw-conv-kr-sync}
   Let $b, \bar{b}\colon \R \to \R$ and $\sigma, \bar{\sigma}\colon \R \to \R_+$ be Lipschitz continuous.
   Consider the SDEs \eqref{eq:sdes} driven by a common Wiener process $W$, and write $X, \bar X$ for the solutions of \eqref{eq:sdes} and $\mu, \nu$ for their laws. For any $N \in \N$ and $h = 1/N$, let $X^h, \bar X^h$ be the monotone Euler--Maruyama schemes for \eqref{eq:sdes} driven by the common Wiener process $W$. Further, let $\mu^h = \Law((X^h_{kh})_{k \in \{0, \dotsc, N - 1\}})$, $\nu^h = \Law((\bar X^h_{kh})_{k \in \{0, \dotsc, N - 1\}})$ denote the laws of the discrete-time monotone Euler--Maruyama schemes.
   
   Then, for any $N \in \N$ and $h = 1/N$, $\Law(X^h, \bar X^h)$ is an interpolation of the Knothe--Rosenblatt rearrangement, in the sense that
    \begin{equation}\label{eq:kr-interpolation}
        \Law\big((X^h_{kh}, \bar X^h_{kh})_{k \in \{0. \dotsc, N - 1\}}\big) = \pi^\kr_{\mu^h, \nu^h}.
    \end{equation}
    Moreover, for any $p \geq 1$,
    \begin{equation}\label{eq:aw-conv-kr-sync}
        \lim_{h \to 0}\AW_p\big(\Law(X^h, \bar X^h), \pi^\sync_{\mu, \nu}\big) = 0.
    \end{equation}
\end{proposition}

\begin{proof}
    By definition of the synchronous coupling, $\Law(X, \bar X) = \pi^\sync_{\mu, \nu}$. For fixed $N \in \N$ and $h = 1/N$, \Cref{lem:kr-joint-law} implies that $\Law(X^h, \bar X^h)$ interpolates the Knothe--Rosenblatt rearrangement in the sense of \eqref{eq:kr-interpolation}.

    For any $p \geq 1$, we have the $L^p$ convergence $X^h \to X$ and $\bar X^h \to \bar X$ by \Cref{lem:convergence-truncated}. Since the same correlated Wiener process $(W, W)$ is chosen to drive both the SDEs \eqref{eq:sdes} and the monotone Euler--Maruyama schemes, the desired $\AW_p$ convergence \eqref{eq:aw-conv-kr-sync} then follows from \Cref{thm:convergence-general}.
\end{proof}

\subsection{$\AW_p$ between laws of SDEs with continuous coefficients}\label{sec:continuous-general}

In this section, we complete the proof of \Cref{thm:synchronous_optimal} under \Cref{ass:main}; that is, we relax the above Lipschitz assumption and show optimality of the synchronous coupling for the adapted Wasserstein distance between laws of SDEs for which pathwise uniqueness holds and whose coefficients are continuous and have linear growth. 

We start by making some remarks on \Cref{ass:main} and providing examples of coefficients that satisfy this assumption.

\begin{remark}\label{rem:strong-existence}
	Under \Cref{ass:main}, strong existence is guaranteed for the SDEs \eqref{eq:sdes}. Indeed, by a result of Skorokhod \cite{Sk65}, there exist weak solutions of the SDEs under the given continuity and linear growth assumptions on the coefficients. Then, by the Yamada-Watanabe criterion \cite[Ch.\ 5, Corollary 3.23]{KaSh91}, the combination of pathwise uniqueness and weak existence implies the existence of strong solutions. We refer to \cite{CoRo22} for an example of a Markovian SDE for which strong existence does not hold.
\end{remark}

\begin{remark}\label{rem:main_ass}
	\Cref{ass:main} is satisfied, for example, in the following cases:
	\begin{enumerate}[label=(\roman*)]
		\item $b, \bar{b}, \sigma, \bar{\sigma}$ are Lipschitz \cite[Proposition 1.9 (It\^o)]{ChEn05} --- see \Cref{prop:bnt};
		\item $b, \bar{b}, \sigma, \bar{\sigma}$ are continuous and bounded, $\sigma, \bar{\sigma}$ are $1/2$-H\"older continuous and bounded below by a positive constant \cite[Proposition 1.10 (Zvonkin)]{ChEn05} --- see \Cref{prop:zvonkin};
		\item $b, \bar{b}, \sigma, \bar{\sigma}$ are continuous with linear growth, $\sigma, \bar{\sigma}$ are strictly positive and $1/2$-H\"older continuous, and $b/\sigma^2, \bar{b}/\bar{\sigma}^2$ are locally Lebesgue-integrable \cite[Proposition 1.11 (Engelbert--Schmidt)]{ChEn05};
		\item $b, \bar{b}$ are Lipschitz, $\sigma, \bar{\sigma}$ have linear growth and are uniformly continuous with a strictly increasing modulus of continuity $h \colon \R_+ \to \R$ satisfying $\int_0^{0+}h^{-2}(x) \D x = +\infty$ \cite[Proposition 1.12 (Yamada--Watanabe)]{ChEn05}.
	\end{enumerate}
\end{remark}

We are now ready to provide the following theorem, which, in particular, implies our main result, \Cref{thm:synchronous_optimal}.

	\begin{theorem}\label{thm:optimality_general_c}
		Suppose that $(b, \sigma)$ and $(\bar b, \bar \sigma)$ satisfy \Cref{ass:main}.
		Let $c \colon \mathbb R\times \mathbb R \to\mathbb R$ be continuous and satisfy \eqref{eq:p_growth_c} and \eqref{eq:L_superadditivity}, for some $p \ge 1$ and $K>0$. 
		Then the synchronous coupling attains the following infimum:
		\[
\inf_{\pi\in\cplbc(\mu^{b,\sigma},\mu^{\bar b,\bar\sigma})}\iint_0^1c\!\left(\omega_t,\bar{\omega}_t\right)\dt\di\pi.
\]
	If the inequality in \eqref{eq:L_superadditivity} is reversed, then this infimum is attained by the anti-synchronous coupling. 
	\end{theorem}	
	
\begin{proof}
	Under Lipschitz conditions on the coefficients of the SDEs \eqref{eq:sdes}, we have already proved the conclusion of the theorem in \Cref{prop:bnt}. Now suppose that the more general condition of \Cref{ass:main} is satisfied, namely that the coefficients are continuous with linear growth, and that pathwise uniqueness holds. Then, according to \Cref{rem:corr_uniqueness}, the SDEs \eqref{eq:sdes} satisfy \Cref{ass:stability}, where the coefficients $b, \bar{b}, \sigma, \bar{\sigma}$ now are Markovian and time-homogeneous and we identify, for example, $b(t, \omega) = b(\omega_t)$, $t \in [0, 1]$, $\omega \in \Omega$. We can approximate $(b, \bar{b}, \sigma, \bar{\sigma})$ locally uniformly by a sequence of Lipschitz functions $(b^n, \bar{b}^n, \sigma^n, \bar{\sigma}^n)_{n \in \N}$ on $\R$, which all satisfy the same linear growth bound. Note that locally uniform convergence of the Markovian coefficients implies convergence in the sense of \eqref{eq:unifconvassump}. Defining $\mu^n$, $\nu^n$ as the laws of $X^{b^n, \sigma^n}$, $X^{\bar b^n, \bar \sigma^n}$, we have by \Cref{prop:bnt} that the synchronous coupling $\pi^\sync_{\mu^n, \nu^n}$ attains the infimum for the corresponding bicausal transport problem. The first part thus follows by \Cref{cor:synchronous-stability}.
	Finally, if $c$ satisfies \eqref{eq:L_superadditivity} with the inequality reversed, then \Cref{prop:kr-discrete-optimality} and \Cref{lem:kr-joint-law}, and thus \Cref{prop:bnt}, hold with obvious modifications and the last part then follows by use of the same arguments as used above. 
\end{proof}

\begin{remark}\label{rem:sync-uniqueness}
	For the case of $\AW_2$ with sufficiently regular coefficients, uniqueness of the optimal coupling follows from \cite{BiTa19}, in view of \Cref{rem:BN-T:distance}. Namely, when $b, \bar b, \sigma, \bar \sigma$ are continuously differentiable with $\alpha$-H\"older first derivative for some $\alpha \in (0, 1)$, and $\sigma, \bar \sigma$ are bounded away from zero, \cite[Section 2.1]{BiTa19} gives a PDE formulation of $\AW_2$, from which we can deduce uniqueness of the optimiser. Recovering and extending this uniqueness result via probabilistic arguments is left open for future research.
\end{remark}

\begin{remark}
	We note that we can further extend \Cref{thm:synchronous_optimal} by combining different sets of assumptions. If the coefficients $(b, \sigma)$ satisfy \Cref{ass:main} and the coefficients $(\bar b, \bar \sigma)$ satisfy \Cref{ass:zvonkin} (or vice-versa), then the conclusion of \Cref{thm:synchronous_optimal} still holds, by the following reasoning.
	
	Suppose that $(\bar b, \bar \sigma)$ satisfy \Cref{ass:zvonkin}. If $(b, \sigma)$ are Lipschitz, then by examining the proofs of \Cref{prop:bnt} and \Cref{prop:zvonkin}, it is straightforward to see that the result still holds.
	
	Now suppose that $(b, \sigma)$ satisfy \Cref{ass:main} but are not Lipschitz. We then need to adapt the stability result of \Cref{prop:stab_abstract} because the coefficients $(\bar b, \bar \sigma)$ may not be continuous. As before, we can approximate $(b, \sigma)$ by Lipschitz functions in the sense of \eqref{eq:unifconvassump}. On the other hand, we fix the constant sequence $(\bar b^n, \bar \sigma^n) = (\bar b, \bar \sigma)$, for all $n \in \N$. Then, after applying Skorokhod's representation theorem, we apply Lusin's theorem for a second time, in order to find a continuous function that coincides with $\bar b$ on a set of arbitrarily large measure, with respect to the law of the fixed process $X^{\bar b, \bar \sigma}$. We can then follow the remainder of the proof as above.
\end{remark}

\begin{remark}[time-dependent coefficients]
	One can also extend \Cref{thm:synchronous_optimal} to the time-inhomogeneous case. In particular, assuming that the coefficients are Lipschitz in space uniformly in time, Lipschitz in time uniformly in space, and have linear growth in space, the proofs in \Cref{sec:formulation} and \Cref{sec:proofs} remain valid with only minor modifications. In this case, the monotone Euler--Maruyama scheme should be modified such that the expression for $X^h_t$ in \Cref{def:monotone-em} is replaced by $X^h_t = X^h_{kh} + (t - kh)b(kh, X^h_{kh}) + \sigma(kh, X^h_{kh})(W^h_t - W^h_{kh})$. Under the given conditions on the coefficients, the corresponding standard Euler--Maruyama scheme converges in $L^2$ by \cite[Theorem 10.2.2]{KlPl92} and one can deduce $L^p$ convergence of the monotone scheme, for any $p \geq 1$, as in \Cref{sec:numerical-scheme}. Further, the above stability argument still applies and also allows us to pass to coefficients that are only continuous in time. Therefore \Cref{thm:synchronous_optimal} holds under \Cref{ass:main} for time-dependent coefficients that are also continuous in time.
\end{remark}

\subsection{Extension to discontinuous drifts}\label{sec:discontinuous}

In this section, we establish the conclusion of \Cref{thm:synchronous_optimal} under a different set of assumptions, which allows for discontinuities in the drift. For further results in this direction, see the follow-up work \cite{RoSz24}.
For the coefficients under consideration, we are able to employ a similar approach as used for the case of Lipschitz coefficients above.
Specifically, we first apply a Zvonkin-type transformation to remove the drift and then use the monotone Euler--Maruyama scheme for the resulting martingales which feature Lipschitz diffusion coefficients. 
To this end, we work under the following assumption.

\begin{assumption}\label{ass:zvonkin}
	Suppose that the coefficients $b, \bar{b}, \sigma, \bar{\sigma}$ of the SDEs \eqref{eq:sdes} satisfy the following conditions:
	\begin{enumerate}[label=(\roman*)]
		\item $b, \bar{b}$ are bounded and measurable;
		\item $\sigma, \bar{\sigma}$ are bounded, uniformly positive and Lipschitz continuous; \emph{and}
		\item $b/\sigma^2, \bar{b}/\bar{\sigma}^2$ are Lebesgue-integrable.
	\end{enumerate}
\end{assumption}

\begin{remark}
	By Zvonkin's theorem \cite{Zv74}, there exist unique strong solutions $(X_t)_{t \geq 0}, (\bar X_t)_{t \geq 0}$ of the SDEs \eqref{eq:sdes} under \Cref{ass:zvonkin} (i)--(ii). In fact, for well-posedness of the SDEs, the Lipschitz continuity can be weakened to $1/2$-H\"older continuity, but we will make use of the Lipschitz condition later on in order to apply the monotone Euler--Maruyama scheme.	
\end{remark}

\begin{proposition}\label{prop:zvonkin}
		Suppose that $(b, \sigma)$ and $(\bar b, \bar \sigma)$ satisfy \Cref{ass:zvonkin}.
		Let $c \colon \mathbb R\times \mathbb R \to\mathbb R$ be continuous and satisfy \eqref{eq:p_growth_c} and \eqref{eq:L_superadditivity}, for some $p \ge 1$ and $K>0$. 
		Then the synchronous coupling attains the following infimum:
		\[
\inf_{\pi\in\cplbc(\mu^{b,\sigma},\mu^{\bar b,\bar\sigma})}\iint_0^1c\!\left(\omega_t,\bar{\omega}_t\right)\dt\di\pi.
\]
In particular, for any $p \ge 1$, the synchronous coupling attains the infimum in $\AW_p(\mu^{b,\sigma},\mu^{\bar b,\bar\sigma})$.
	\end{proposition}

To prove this result, we use exactly the drift-removing transformation
that Zvonkin introduced to prove existence and uniqueness of
strong solutions in~\cite{Zv74}. Define the increasing map $T\colon \R \to \R_+$ by
\begin{equation}
	T(x)\coloneqq  \int_{x_0}^x\exp\!\left\{- 2 \int_{x_0}^z \frac{b(y)}{\sigma^2(y)} \D y \right\}\D z, \quad x \in \R,
\end{equation}
and let $Y_t\coloneqq  T(X_t)$, for $t \in [0, 1]$, where $X$ is the unique strong solution of \eqref{eq:sde} with coefficients $b, \sigma$. Then, by It\^o's formula, $Y$ solves the SDE
\begin{equation}\label{eq:sde-transformed}
	\D Y_t = (\sigma T^\prime) \circ T^{-1}(Y_t) \D W_t; \quad Y_0 = T(x_0).
\end{equation}

\begin{lemma}\label{lem:zvonkin-lipschitz}
	Suppose that $b \colon \R \to \R$ is bounded and measurable, and that $\sigma \colon \R \to \R_+$ is bounded, uniformly positive, and Lipschitz. Then the map $(\sigma T^\prime) \circ T^{-1} \colon \R \to \R_+$ is Lipschitz.
\end{lemma}

\begin{proof}
	For $x_1, x_2 \in \R$, we are required to show that there is some constant $K > 0$ such that
	\begin{equation}
		\lvert \sigma(x_2) T^\prime(x_2) - \sigma(x_1) T^\prime(x_1) \rvert \leq K \lvert T(x_2) - T(x_1)\rvert.
	\end{equation}
	Since $\sigma$ is Lipschitz, there exists a Lebesgue-almost everywhere derivative $\sigma^\prime$ that is Lebesgue-almost surely bounded by the Lipschitz constant $K^\sigma$ of $\sigma$. Hence $\sigma T^\prime$ is also Lebesgue-almost surely differentiable, and its derivative satisfies
	\begin{equation}
		\begin{split}
			(\sigma T^\prime)^\prime(x)
			& = \sigma^\prime(x) \exp\!\left\{ - 2 \int_{x_0}^x \frac{b(y)}{\sigma^2(y)} \D y \right\} - 2 \frac{b(x)}{\sigma(x)}\exp\!\left\{- 2 \int_{x_0}^x \frac{b(y)}{\sigma^2(y)} \D y\right\},
		\end{split}
	\end{equation}
	for Lebesgue-almost every $x \in \R$. Then, integrating, we have
	\begin{equation}
		\begin{split}
			\!\left\lvert \sigma(x_2)T^\prime(x_2) - \sigma(x_1)T^\prime(x_1)\right \rvert
			& \leq \!\left(K^\sigma + 2 \frac{\lVert b\rVert_\infty}{\inf_{y \in \R}\sigma(y)}\right)\lvert T(x_2) - T(x_1)\rvert, 
		\end{split}
	\end{equation}
	using the Lebesgue-almost sure bound on $\sigma^\prime$ and the assumption that $\sigma$ is bounded away from zero.
\end{proof}

In light of \Cref{lem:zvonkin-lipschitz}, we can apply the monotone Euler--Maruyama scheme defined in \Cref{def:monotone-em} to the transformed SDE \eqref{eq:sde-transformed}. Fix $N \in \N$ and $h = 1/N$. The full scheme for the SDE \eqref{eq:sde} in this case is then as follows. Let $X^h_0 = x_0$ and, for $k = 0, \dotsc, N - 1$ and $t \in (kh, (k+1)h]$, define
\begin{equation}\label{eq:transformed-em}
	X^h_t\coloneqq  T^{-1}\!\left[T(X^h_{kh}) + \sigma(X^h_{kh})T^{\prime}(X^h_{kh})(W^h_t - W^h_{kh})\right].
\end{equation}

The map $T$ is increasing and invertible with increasing inverse. Therefore, for $h > 0$ sufficiently small and $k = 0, \dotsc, N - 1$, we can use the same arguments as in the proof of \Cref{lem:truncated-increasing-kernels}, along with the fact that $\sigma T^\prime \circ T^{-1}$ is Lipschitz, to see that $X^h_{kh} \mapsto X^h_{(k+1)h}$ is a concatenation of three increasing maps. Hence the process $(X^h_{kh})_{k = 0, \dotsc, N}$ is stochastically increasing.

\begin{remark}\label{rem:discontinuous}
	\Cref{ass:zvonkin}.(iii) guarantees that $T^{-1}$ is Lipschitz, with some Lipschitz constant $C > 0$. Indeed, if $x < x^\prime$, then
	\begin{equation}
		|T(x^\prime) - T(x)| = \int_x^{x^\prime} \exp\!\left\{ \int_{x_0}^z -2\frac{b(y)}{\sigma^2(y)} \D y \right\} \D z \geq (x^\prime - x) \exp\!\left\{ -2 \sup_{z \in \R}\int_{x_0}^z \frac{b(y)}{\sigma^2(y)} \D y\right\}.
	\end{equation}
	
	Applying the moment bound \eqref{eq:lp-ito} to the solution $Y$ of \eqref{eq:sde-transformed} thus gives us that the solution $X$ of \eqref{eq:sde} is also $p$-integrable, for $p \ge 1$. Similarly, applying \Cref{lem:truncated-scheme-l2-bound} to the monotone Euler--Maruyama scheme $Y^h$ for \eqref{eq:sde-transformed} yields an $L^p$-bound for $X^h$ defined by \eqref{eq:transformed-em}, for $h > 0$ and $p \geq 1$. 
	The bicausal optimal transport problems appearing in \Cref{prop:zvonkin} are therefore well-defined.
	
	Furthermore, for any $p \ge 1$, $h > 0$ and $s \in [0, 1]$, we can write
	\begin{equation}
		\lvert X^h_s - X_s \rvert^p = \lvert T^{-1}(Y^h_s) - T^{-1}(Y_s)\rvert^p \leq C \lvert Y^h_s - Y_s\rvert^p.
	\end{equation}
	Hence, from the $L^p$-convergence of $Y^h$ to $Y$ given by \Cref{lem:convergence-truncated}, we can deduce $L^p$-convergence of $X^h$ to $X$.
\end{remark}

\begin{proof}[Proof of \Cref{prop:zvonkin}]
Thanks to \Cref{lem:zvonkin-lipschitz} and \Cref{rem:discontinuous}, the result follows by use of the same arguments as used to prove \Cref{prop:bnt}.
\end{proof}

\begin{remark}
Since our stability result in \Cref{cor:synchronous-stability} requires the coefficients to be continuous, with the methods employed in this paper, \Cref{prop:zvonkin} is the most general result that we are able to obtain for SDEs with discontinuous coefficients.
\end{remark}

\section{On the topology induced by the adapted Wasserstein distance}\label{sec:topology}
In this section, we apply the stability result of \Cref{prop:stab_abstract} to prove \Cref{thm:topologies}. This theorem states that, restricted to a particular subset of probability measures, the topology induced by the adapted Wasserstein distance coincides with the topologies induced by the synchronous distance, the (symmetric) causal Wasserstein distance and the classical Wasserstein distance, as defined in the introduction, as well as with the topologies of weak convergence and convergence in finite-dimensional distributions.

We recall the following notation. When strong existence and pathwise uniqueness hold for the SDE \eqref{eq:sde} with coefficients $(b, \sigma)$, we write $X^{b, \sigma}$ for the unique strong solution, and $\mu^{b, \sigma}$ for its law. We then define $\Pc^\ast$ to be the set of all such laws. For $\Lambda > 0$, we define $\A^\Lambda$ to be the set of $\Lambda$-Lipschitz functions whose absolute value at zero is also bounded by $\Lambda$, and then define $\Pc^\Lambda\coloneqq  \{\mu^{b, \sigma} \colon \; (b, \sigma) \in \A^\Lambda \times \A^\Lambda\} \subset \Pc^\ast$. A further result of \Cref{thm:topologies} is that the set $\Pc^\Lambda$ is compact with respect to the topology induced by the adapted Wasserstein distance.

We introduce a further subset
	$		\bar \Pc \coloneqq \big\{
		\mu^{b,\sigma}:
		\textrm{$(b,\sigma)$ satisfies \Cref{ass:main}}
		\big\} \subset \Pc^\ast$.
Also recall the set $\Pc_p$ of measures on $\Omega$ with finite $p$\textsuperscript{th} moment, for $p \geq 1$. We note that $\sup_{t \in [0, 1]}\omega_t\in L^p(\mu)$, for any $\mu\in\bar \Pc$ and $p\ge 1$, by the estimates given in \Cref{rem:ito-strong-solution}.
	For $p\ge 1$, the synchronous distance $\SW_p$ is therefore well-defined on $\bar \Pc$, and $(\bar \Pc,\SW_p)$ is a metric space.

\begin{proof}[Proof of \Cref{thm:topologies}]
		Note first that, for $p \geq 1$, we have the inclusion $\Pc^\Lambda\subset\bar \Pc\subset\Pc_p$. The subspace topologies obtained by restricting the topologies listed in the statement of the theorem to $\Pc^\Lambda$ are thus well-defined.	
		
		Let us define another (a priori stronger) topology $\tau_p$ on $\Pc^\Lambda$ to be the topology induced by the distance $\SW^\infty_p$, defined by
		\begin{equation}
			\SW^\infty_p(\mu, \nu)\coloneqq  \E^{\pi^\sync_{\mu, \nu}}\!\left[\sup_{0\le t\le 1}\!\left|\omega_t - \bar \omega_t\right|\!^p\right]^{1/p}, \quad \mu, \nu \in \bar \Pc, \quad p \in [1, \infty).
		\end{equation} We first show that, for $p \in [1, \infty)$, $\Pc^\Lambda$ is (sequentially) compact with respect to $\tau_p$ and that $\tau_p$ is independent of $p$.
		To this end, note that, viewed as a subset of continuous functions, $\A^\Lambda$ is (sequentially) compact with respect to the topology of local uniform convergence. Indeed, consider a sequence $(\varphi_n)_{n\in\mathbb{N}}\subset\A^\Lambda$. For every $K>0$, by the Arzel{\`a}-Ascoli theorem, there exists a subsequence that converges uniformly on $[-K,K]$; it follows by a diagonalisation argument that the sequence $(\varphi_n)_{n\in\mathbb{N}}$ converges locally uniformly and its limit belongs to $\A^\Lambda$. 
		Consider a sequence $(\mu_n)_{n\in\mathbb{N}}\subset\Pc^\Lambda$ and let $(b_n,\sigma_n)_{n \in \N}\subset\A^\Lambda\times\A^\Lambda$ be such that $\mu_n=\mu^{b_n,\sigma_n}$. 
		By the above compactness, an equally denoted subsequence $(b_n, \sigma_n)_{n \in \N}$ converges locally uniformly to some $(b, \sigma) \subset\A^\Lambda\times\A^\Lambda$; we write $\mu=\mu^{b,\sigma}$. 
		Now fix a probability space $(\Omega, \F, \P)$ supporting a Wiener process $W$. Let $X^{b,\sigma}$ and $X^{b_n,\sigma_n}$ be the unique strong solutions of the SDE \eqref{eq:sde} driven by the same Wiener process $W$, on the same probability space, with coefficients $(b, \sigma)$ and $(b_n, \sigma_n)$, respectively, for $n \in \N$.
		Applying \Cref{prop:stab_abstract}, we obtain that $(\Law(X^{b_n,\sigma_n},X^{b,\sigma}))_{n\in\mathbb{N}}$ converges in the $p$-Wasserstein distance on $\Pc(\Omega \times \Omega)$ to $\Law(X^{b,\sigma},X^{b,\sigma})$, for any $p\ge 1$. Noting that, for any $p \in [1, \infty)$, the function $(\omega, \bar \omega) \mapsto \sup_{0 \leq t \leq 1}|\omega_t - \bar \omega_t|^p$ is continuous and has at most rate $p$ polynomial growth, we have by \Cref{rem:wass-p} that,
		\begin{equation}\label{eq:proof_topologies}
		\SW_p^\infty(\mu_n,\mu)
		=
		\E\!\left[\sup_{0\le t\le 1}\!\left|X^{b_n,\sigma_n}_t-X^{b,\sigma}_t\right|\!^p\right]^{1/p}
		\xrightarrow[n\to\infty]{}0,
		\end{equation}
		for any $p \in [1, \infty)$.
		Therefore $\Pc^\Lambda$ is (sequentially) compact w.r.t.\ $\tau_p$ for every $p \in [1, \infty)$ and, by the same argument, all topologies $\tau_p$ on $\Pc^\Lambda$ coincide. Let us call $\tau$ this common topology.
		
		We now use the following well-known fact for topological spaces $(A, \tau^A)$, $(B, \tau^B)$. If $I\colon (A,\tau^A)\to (B,\tau^B)$ is continuous and invertible, $A$ is $\tau^A$-compact, and $\tau^B$ is Hausdorff, then $I^{-1}$ is continuous. Applied to $I$ being the identity map, $A=B$, $\tau^B$ being Polish, and $\tau^B$ weaker than $\tau^A$, this argument shows that if $A$ is $\tau^A$-compact then $\tau^A=\tau^B$.
			
		As $\Pc^\Lambda$ is $\tau$-compact, and by the previous paragraph, it now only remains to argue that convergence in each of the topologies listed in the theorem is implied by convergence in $\SW_p^\infty$, for some $p\in[1,\infty)$. 
		It is clear that $\SW^\infty_p(\mu, \nu) \ge \SW_p(\mu, \nu)$, for any $p \in [1, \infty)$, $\mu, \nu \in \bar \Pc$. Now note that, for $\mu, \mu_n \in \bar \Pc$, $n \in \N$, we have $\pi^{\sync}_{\mu_n,\mu}\in\cplbc(\mu_n,\mu)$, and therefore, for $p\in [ 1,\infty)$,
		\begin{equation}
			\lim_{n \to \infty}\SW^\infty_p(\mu_n, \mu) = 0
			\implies
			\lim_{n\to\infty}\SW_p(\mu_n,\mu)=0
			\implies  
			\lim_{n\to\infty}\AW_p(\mu_n,\mu)=0.
		\end{equation}
		Further, since $\cplbc(\mu,\nu)\subseteq\cplc(\mu,\nu)\subseteq\cpl(\mu,\nu)$, and since $\AW_p$ and $\W_p$ are both symmetric, we immediately get that for $p\ge 1$,
		\begin{equation}
			\AW_p(\mu,\nu)\ge 
			\SCW_p(\mu,\nu)\ge
			\W_p(\mu,\nu),
			\quad\mu,\nu\in\Pc_p,
		\end{equation}
		which yields the corresponding ordering for the topologies induced by these metrics. By the same token, convergence in $\CW_p$ is also implied by convergence in $\SW^\infty_p$.
		The convergence of $\mu_n$ to $\mu$ in $\W_1$ implies weak convergence of $\mu_n$ to $\mu$ with respect to the uniform topology on $C([0,1],\R)$, which in turn implies that convergence holds also for the weak topology associated with the $L^p$-topology on $C([0,1],\R)$, for any $p\in [1,\infty]$. Finally, convergence in finite-dimensional distributions is implied by convergence in the weak topologies above.
\end{proof}

\begin{remark}
	The result of \Cref{thm:topologies} applies also to sets of the form $\{
	\mu^{b,\sigma}\colon (b,\sigma)\in\A^\Lambda\times\A^{\tilde\Lambda}
	\}$, $\Lambda,\tilde\Lambda>0$. 
	An inspection of the proof shows that the result also applies to the set $\{\mu^{b,\sigma}\colon \; b,\sigma\in\A^{\kappa,\Lambda}, \; \sigma>0\}$, where $\Lambda>0$, $\kappa\in[1/2,1]$ and 
$$\A^{\kappa,\Lambda}
=
\!\left\{\varphi\in C(\R,\R)\colon 
|\varphi(x)-\varphi(y)|\le \Lambda |x-y|^\kappa
\textrm{ and }
|\varphi(x)|\le \Lambda(1 + | x|),
\;x,y\in\R
\right\},$$
applying the same Arzel{\`a}-Ascoli argument. In this case, existence and uniqueness of strong solutions is guaranteed by a result of Engelbert and Schmidt \cite[Proposition~1.11]{ChEn05} (c.f.\ \Cref{rem:main_ass}), as the continuity of the coefficients and the strict positivity of $\sigma$ implies that the local integrability condition required for this result is satisfied.
\end{remark}

\begin{remark}\label{rem:multidim-topo}
	In contrast to the optimality results, \Cref{thm:topologies} admits a multidimensional extension. We first observe that the stability result \Cref{prop:stab_abstract} holds in arbitrary dimensions, following the same proof as in dimension one. For two $\R^d$-valued SDEs, we define the synchronous coupling as the joint law of their solutions when they are driven by a common $d$-dimensional Brownian motion, and we define the subset  $\Pc^\Lambda$ of probability measures on $C([0, T], \R^d)$ analogously to the one-dimensional setting. Then one can adapt the proof of \Cref{thm:topologies} to prove the same topological equivalence in general dimensions. We note that this multidimensional result extends \cite[Propositions 1.8 and 1.9]{BiTa19}.
\end{remark}

\section{Examples}\label{sec:discussion}
	In this final section, we collect some examples. We first provide one that motivates the use of the adapted Wasserstein distance when considering distances between processes. This is a continuous-time analogue of the example given in \cite{BaBaBeEd19a}.
	
	\begin{example}[motivating example]\label{ex:motivation}
		\begin{figure}[h]
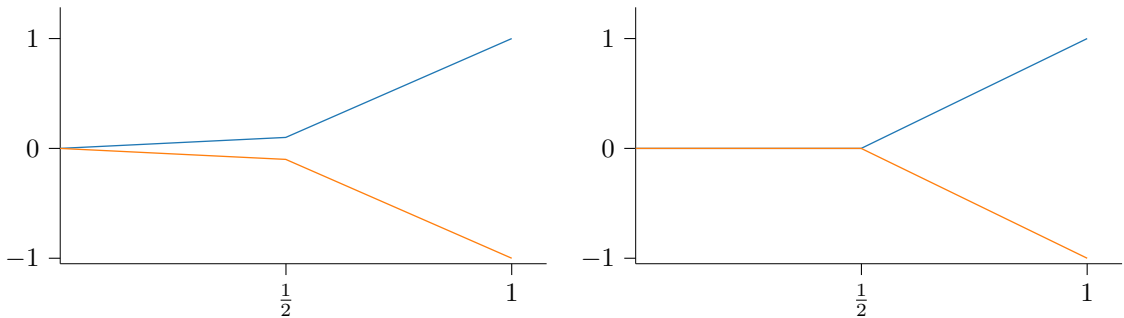

		\input{approx-example-3.pgf}
		\quad
		\input{limit-example-3.pgf}
		\caption{The two possible trajectories of $X^n$, for some $n \in \N$, are shown on the left, and the two possible trajectories of $X^\infty$ on the right.}\label{fig:motivation}
	\end{figure}
		For $n \in \N \cup \{\infty\}$, define the process $(X^n_t)_{t \in [0, 1]}$ with $X^n_0 = 0$ such that
		\begin{equation}
			\begin{split}
				X^n_{\frac{1}{2}} = \frac{1}{n}\quad\text{and} \quad X^n_1 = 1, &\quad \text{with probability}\; \frac{1}{2},\\
				X^n_{\frac{1}{2}} = -\frac{1}{n} \quad \text{and} \quad X^n_1 = -1, &\quad \text{with probability}\; \frac{1}{2},
			\end{split}
		\end{equation}
		linearly interpolated for intermediate times,
		and define $\mu^n\coloneqq  \Law(X^n)$. The two possible trajectories of $X^n$, $n \in \N$ are shown on the left-hand side of \Cref{fig:motivation}, and the trajectories of $X^\infty$ are shown on the right-hand side of \Cref{fig:motivation}.
		
		One can see that the behaviour of the approximating processes after time $1/2$ is completely determined by the history of the process up to that time, whereas the behaviour of the limiting process after time $1/2$ is independent of the past. The classical Wasserstein distance cannot distinguish these differing information structures.
		
		For each $n \in \N$, it is possible to couple $\mu^n$ and $\mu^\infty$ in such a way that paths that terminate at a positive value are mapped onto each other, and likewise for negative values. Thus we can see that the Wasserstein distance between $\mu^n$ and $\mu^\infty$ converges to $0$ as $n \to \infty$. Such a coupling is not bicausal, however. In fact the only bicausal coupling is the product coupling, which maps paths that terminate at a positive value onto those that terminate at a negative value with probability $1/2$. We can thus bound the adapted Wasserstein distance $\AW_p(\mu^n, \mu^\infty)$ from below by a positive constant, for any $p \geq 1$.
		Note that if we take the right-continuous version of the filtration in the definition of causality, \Cref{def:bicausal_couplings}, then the previous argument still holds.
		
		Consider finally the problem of finding $V^n\coloneqq  \inf_{Z \;\F^{X^n}_{1/2}\text{-measurable}}\E|X^n_1 - Z|^2$, for each $n \in \N \cup \{\infty\}$, with $\F^{X^n}$ denoting the raw natural filtration of $X^n$. Since $X^n_1$ is $\F^{X^n}_{1/2}$-measurable, for each $n \in \N$, we have $V^n = 0$. On the other hand, $V^\infty = 1$, since the sigma-algebra $\F^{X^\infty}_{1/2}$ is trivial. Thus we see that $V^n$ does not converge to $V^\infty$ as $n \to \infty$. This exemplifies how the classical Wasserstein distance fails to capture the role of information in dynamic decision problems.
	\end{example}
	
	\subsection{Non-Markovianity}

	We now show that, if the coefficients of the SDEs \eqref{eq:sdes} are non-Markovian, then the synchronous coupling may fail to attain the adapted Wasserstein distance between the laws of the solutions of \eqref{eq:sdes}.

	\begin{example}[non-Markovian counterexample]\label{ex:non-M}
	This example already appears in \cite{BaBaBeEd19a}, as a counter-example in a different setting.
	 
		Let $C>0$, $h\in(0,1)$, and define $b(t, \omega) \coloneqq  C \, \text{sign}(\omega_h)\mathds{1}_{\{t>h\}}$, for $t \in [0, 1]$, $\omega \in \Omega$. Then let $\mu \coloneqq \Law(X)$, where $X$ is the unique strong solution of
		\begin{equation}
		\di X_t=b(t, X)\dt+\di W_t,
		\quad X_0=0;
	\end{equation}
	note that strong existence and pathwise uniqueness are guaranteed by Zvonkin \cite{Zv74}. Similarly, let $\nu \coloneqq \Law(\bar X)$, where $\bar X$ is the unique strong solution of
		\begin{equation}
		\di \bar X_t=-b(t, \bar X)\dt+\di W_t,
		\quad \bar X_0=0.
	\end{equation}
	Now consider the couplings
	\begin{align}
		\pi^\sync& \coloneqq \Law\!\left (W_\cdot + C\, \text{sign}(W_h) [\cdot-h]_+\,\,\, , \,\,\,  W_\cdot- C\, \text{sign}(W_h) [\cdot-h]_+ \right ); \\
		\pi^\async& \coloneqq \Law\!\left (W_\cdot + C\, \text{sign}(W_h) [\cdot-h]_+\,\,\, , \,\,\,  -W_\cdot + C\, \text{sign}(W_h) [\cdot-h]_+ \right ).
	\end{align}
	Note that $\pi^\sync, \pi^\async \in \cplba(\mu, \nu)$. In fact, $\pi^\sync$ is the synchronous coupling between its marginals, whereas arguably the \emph{anti-synchronous} coupling $\pi^\async$ is the opposite of the synchronous coupling (cf.\ the relationship between the monotone and antitone couplings between measures on $\R$). A few computations reveal that, for the quadratic cost, we have
	\begin{align}
		\E^{\pi^\sync}\!\left[\int_0^1 | \omega_t - \bar \omega_t|^2 \D t\right]&=\E\!\left[\int_h^1(2C\, \text{sign}(W_h)[t-h])^2\di t\right] = \frac{4}{3}C(1-h)^3, \quad \text{while}\\
		\E^{\pi^\async}\!\left[\int_0^1 | \omega_t - \bar \omega_t|^2 \D t\right]&=\E\!\left[\int_0^1(2W_t)^2\di t\right] = 2.
	\end{align}
	Choosing $h$ sufficiently small and $C$ sufficiently large, we have that the expected cost of the antisynchronous coupling is strictly less than the expected cost of the synchronous coupling. Hence, the synchronous coupling does not attain the adapted Wasserstein distance $\AW_2(\mu, \nu)$.
	\end{example}

	As a counterpoint to the previous example, we highlight that we may still derive optimality of the synchronous coupling in certain non-Markovian cases.
	
	\begin{example}[kinetic SDEs]\label{ex:kinetic}
	Let $b, \bar b\colon \R \to \R$ be increasing and Lipschitz continuous, and consider, for $t \in [0, 1]$, the kinetic equations
	\begin{equation}\label{eq:kinetic}
		\begin{split}
			dX_t& = b\!\left( \int_0^t X_s\di s \right)\di t+\di W_t, \quad X_0=0; \\
			d\bar X_t &= \bar b\!\left( \int_0^t \bar X_s\di s \right)\di t+\di \bar W_t, \quad \bar X_0=0.
		\end{split}
	\end{equation}
	
	A reasonable time-discretisation of such SDEs would be to define, for $N \in \N$ and $h = \frac{1}{N}$, the process $(X^h_k)_{k = 0, \dotsc, N}$ by $X^h_0=0$ and for $k = 1, \dotsc, N$,
\[
X^h_{k} = X^h_{k-1} + b\bigg( h\sum_{i< k} X^h_i\bigg)h + W_{kh} - W_{(k-1)h}.
\]
This scheme is stochastically monotone in the sense of \Cref{def:stochastic_mon}. Defining $\bar X^h$ in the same way, \Cref{prop:kr-discrete-optimality} then gives that the Knothe--Rosenblatt rearrangement is optimal among bicausal couplings between the laws of $X^h$ and $\bar X^h$ (for the same class of cost functions).
	Similarly to \Cref{lem:kr-joint-law}, we also have that the Knothe--Rosenblatt rearrangement is given by $\Law(X^h, \bar X^h)$ when the discretisation schemes $X^h$ and $\bar X^h$ are defined with respect to a common Wiener process $W = \bar W$.
	
	By use of arguments similar to those employed for the Markovian case in \Cref{sec:proofs}, we expect to obtain appropriate convergence of the scheme to the true solution. Optimality of the synchronous coupling for the continuous-time bicausal transport problem between the laws of solutions to \eqref{eq:kinetic} would then follow from \Cref{prop:stability_optimiser}. However, we do not carry out this analysis rigorously. 
	\end{example}

	\subsection{Multidimensional examples}
	
		As stated in \Cref{rem:monotone-optimality}, our proof of optimality of the synchronous coupling does not extend to higher dimensions. In this section we present multidimensional examples for which the synchronous coupling is not optimal.
		In particular, these examples illustrate that, in higher dimensions, Markovianity is no longer an indicator of optimality of the synchronous coupling. 
		The kinetic system \eqref{eq:kinetic} in \Cref{ex:kinetic} can be made Markovian if we enlarge the state space to $\R^2$, by introducing the additional state variables $V_t=\int_0^t X_s\di s$ and $\bar V_t=\int_0^t \bar X_s\di s$, $t \in [0, 1]$. 
		In a similar fashion, \Cref{ex:non-M} can be made Markovian by introducing an additional state variable.
		
		\begin{example}[two-dimensional counterexample I]\label{ex:2d-1}
			Let $A > 0$ be small and $C > 0$ large, let $B, W$ be independent Wiener processes and consider the SDEs
			\begin{equation}
				\begin{split}
					\D X_t & = C \!\left(\mathds{1}_{\{Y_t > - A\}} - \mathds{1}_{\{Y_t < A\}}\right)\D t + \D B_t, \quad X_0 = 0,\\
					\D Y_t & = \mathds{1}_{\{|Y_t|< A\}}\D W_t, \quad Y_0 = 0.
				\end{split}
			\end{equation}
			Also let $\bar W, \bar B$ be independent Wiener processes and consider the SDEs
			\begin{equation}
				\begin{split}
					\D \bar X_t & = - C \!\left(\mathds{1}_{\{\bar Y_t > - A\}} - \mathds{1}_{\{ \bar Y_t < A\}}\right)\D t + \D \bar B_t, \quad \bar X_0 = 0,\\
					\D \bar Y_t & = \mathds{1}_{\{|\bar Y_t|< A\}}\D \bar W_t, \quad \bar Y_0 = 0.
				\end{split}
			\end{equation}
			The process $Y$ (resp.\ $\bar Y$) is a Wiener process until hitting $-A$ or $A$, where it freezes. The process $X$ (resp.\ $\bar X$) is also a Wiener process until the aforementioned hitting time, after which a drift $C$ or $-C$ is added depending on whether $Y$ had hit $A$ or $-A$ (resp. $\bar Y$ had hit $-A$ or $A$).
			
			Suppose that $\bar B = B$ and $\bar W = W$. We call this the synchronous coupling in dimension two. Then the discrepancy $\int_0^1|X_t - \bar X_t|^2\D t$ is very large, since $X$ and $\bar X$ have large drifts in different directions after the hitting time.
			
			On the other hand, take $\bar B = B$ and $\bar W = - W$. This creates a small discrepancy $\int_0^1 |Y_t - \bar Y_t|^2 \D t$, but now $X_t = \bar X_t$, for all $t \in [0, 1]$. Hence, for $A$ sufficiently small and $C$ sufficiently large, this second coupling has a lower $L^2$ cost than the synchronous coupling.
		\end{example}

		In the previous example, we considered two-dimensional systems driven by two-dimensional Wiener processes. A natural question is whether a counterexample to the optimality of the synchronous coupling can be constructed when the driving Wiener processes are one dimensional. The next example addresses this question.

		\begin{example}[two-dimensional counterexample II]\label{ex:2d-2}
			Let $W, \bar W$ be one-dimensional standard Wiener processes and let $h \in (0, 1)$, $C \in (0, \infty)$. Let $(X, Y)$ be strong solutions of the SDEs
			\begin{equation}
				\begin{split}
					\D X_t & = C \sign(Y_t) \ind{t > h} \D t + \D W_t, \\
					\D Y_t & = \ind{t \leq h} \D W_t,
				\end{split}
			\end{equation} 
			with $X_0 = Y_0 = 0$, and write $\mu = \Law(X, Y)$.
			Also let $(\bar X, \bar Y)$ be strong solutions of the SDEs
			\begin{equation}
				\begin{split}
					\D \bar X_t & = - C \sign(\bar Y_t) \ind{t > h} \D t + \D \bar W_t,\\
					\D \bar Y_t & = \ind{t \leq h} \D \bar W_t,
				\end{split}
			\end{equation}
			with $\bar X_0 = \bar Y_0$, and write $\nu = \Law(\bar X, \bar Y)$. Then we can again define the synchronous coupling $\pi^\sync_{\mu, \nu} \coloneqq \Law(X, Y, \bar X, \bar Y)$ when $W = \bar W$, and the antisynchronous coupling $\pi^\async_{\mu, \nu} \coloneqq \Law(X, Y, \bar X, \bar Y)$ when $W = - \bar W$.
			
			Observe that, for any $t \in [0, 1]$, we have $Y_t = W_{t \wedge h}$ and $\bar Y_t = \bar W_{t \wedge h}$, and so
			\begin{equation}
				\begin{split}
					\D X_t & = C \sign(W_h) \ind{t > h} \D t + \D W_t,\\
					\D \bar X_t & = - C \sign(\bar W_h) \ind{t > h} \D t + \D \bar W_t,
				\end{split}
			\end{equation}
			as in \Cref{ex:non-M}.
			The squared $L^2$-cost under the synchronous coupling is now calculated to be
			\begin{equation}
				\E^{\pi^\sync_{\mu, \nu}}\!\left[\int_0^1|X_t - \bar X_t|^2 \D t + \int_0^1 |Y_t - \bar Y_t|^2 \D t\right] = \E^{\pi^\sync_{\mu, \nu}}\!\left[\int_0^1|X_t - \bar X_t|^2 \D t \right] = \frac{4}{3} C (1 - h)^3.
			\end{equation}
			And for the antisynchronous coupling, the cost is given by
			\begin{equation}
				\begin{split}
					\E^{\pi^\async_{\mu, \nu}}\!\left[\int_0^1|X_t - \bar X_t|^2 \D t + \int_0^1 |Y_t - \bar Y_t|^2 \D t\right] & = \E^{\pi^\async_{\mu, \nu}}\!\left[\int_0^1|X_t - \bar X_t|^2 \D t \right] + 4\int_0^1 \E\!\left[|W_{t \wedge h}|^2\right] \D t\\
					& = 2 + 4 \int_0^1(t \wedge h)\D t \leq 2 + 4h.
				\end{split}
			\end{equation}
			Taking $C$ sufficiently large and $h$ sufficiently small, we conclude that the synchronous coupling may fail to be optimal.
		\end{example}

	\subsection{Adapted Wasserstein distance with $L^\infty$ norm}
		Suppose that we wish to replace the $L^p$ norm on $\Omega = C([0, 1], \R)$ with the $L^\infty$ norm and find the associated adapted $1$-Wasserstein distance between measures $\mu, \nu$ on $\Omega$:
		\begin{equation}\label{eq:sup-bcot}
			\inf_{\pi \in \cplba(\mu, \nu)}\E^\pi\!\left[\sup_{t \in [0, 1]}|\omega_t - \bar \omega_t|\right].
		\end{equation}
		Discretising the problem as before, we arrive at
		\begin{equation}\label{eq:max-bcot}
			\inf_{\pi \in \cplba(\mu^N, \nu^N)}\E^\pi\!\left[\max_{k \in \{1, \dotsc, N\}}|x_k - \bar x_k|\right],
		\end{equation}
		for some $N \in \N$ and measures $\mu_N, \nu_N$ on $\R^N$. This discrete-time bicausal optimal transport problem does not satisfy the assumptions of \Cref{prop:kr-discrete-optimality} for optimality of the Knothe--Rosenblatt rearrangement, since the cost function is not of a separable form.
		
		We now give a counterexample to optimality of the Knothe--Rosenblatt rearrangement for the problem \eqref{eq:max-bcot}, when $N = 2$ and the marginals are atomic. We leave open the question of finding the optimiser when the marginals are the laws of some numerical scheme for an SDE.
		
		\begin{example}
			Define the measures $\mu \coloneqq 1/2 \cdot (\delta_{(-3, -7)} + \delta_{(1, 4)})$ and $\nu \coloneqq 1/2 \cdot (\delta_{(2, 4)} + \delta_{(5, 6)})$ on $\R^2$. We will show that the Knothe--Rosenblatt rearrangement is suboptimal for
			\begin{equation}
				\inf_{\pi \in \cplba(\mu, \nu)}\E^\pi\!\left[|x_1 - \bar x_1| \vee |x_2 - \bar x_2|\right].
			\end{equation}
			We illustrate processes with law $\mu$ and $\nu$, respectively, along with two bicausal couplings in \Cref{fig:counterexample}.
			
			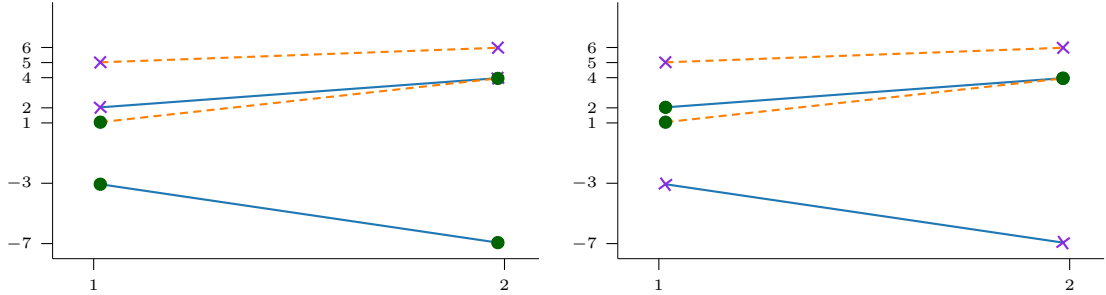
\begin{figure}[h]
				\begin{tikzpicture}

\definecolor{darkgray176}{RGB}{176,176,176}
\definecolor{darkorange25512714}{RGB}{255,127,14}
\definecolor{steelblue31119180}{RGB}{31,119,180}
\definecolor{darkgreen}{RGB}{0, 100, 0}
\definecolor{blueviolet}{RGB}{138,43,226}

\begin{axis}[
tick align=outside,
tick pos=left,
x grid style={darkgray176},
xmin=0.9, xmax=2.01,
xtick style={color=black},
xtick = {1.0, 2.0},
xticklabels = {\tiny{$1$}, \tiny{$2$}},
y grid style={darkgray176},
ymin=-8, ymax=7,
ytick style={color=black},
ytick = {-7.0, -3.0, 1.0, 2.0, 4.0, 5.0, 6.0},
yticklabels = {\tiny{$-7$}, \tiny{$-3$}, \tiny{$1$}, \tiny{$2$}, \tiny{$4$}, \tiny{$5$}, \tiny{$6$}}
]
\draw[{Rays[n = 20, color = darkgreen, scale = 0.8]}-{Rays[n = 20, color = darkgreen, scale = 0.8]}, steelblue31119180, thick](1.0, -3.0) -- (2.0, -7.0);
\draw[{Rays[color = blueviolet]}-{Rays[color = blueviolet]}, steelblue31119180, thick](1.0, 2.0) -- (2.0, 4.0);
\draw[{Rays[n = 20, color = darkgreen, scale = 0.8]}-{Rays[n = 20, color = darkgreen, scale = 0.8]}, densely dashed, orange, thick](1.0, 1.0) -- (2.0, 4.0);
\draw[{Rays[color = blueviolet]}-{Rays[color = blueviolet]}, densely dashed, orange, thick](1.0, 5.0) -- (2.0, 6.0);
\end{axis}


\end{tikzpicture}
				\quad
				\begin{tikzpicture}

\definecolor{darkgray176}{RGB}{176,176,176}
\definecolor{darkorange25512714}{RGB}{255,127,14}
\definecolor{steelblue31119180}{RGB}{31,119,180}
\definecolor{darkgreen}{RGB}{0, 100, 0}
\definecolor{blueviolet}{RGB}{138,43,226}

\begin{axis}[
tick align=outside,
tick pos=left,
x grid style={darkgray176},
xmin=0.9, xmax=2.01,
xtick style={color=black},
xtick = {1.0, 2.0},
xticklabels = {\tiny{$1$}, \tiny{$2$}},
y grid style={darkgray176},
ymin=-8, ymax=7,
ytick style={color=black},
ytick = {-7.0, -3.0, 1.0, 2.0, 4.0, 5.0, 6.0},
yticklabels = {\tiny{$-7$}, \tiny{$-3$}, \tiny{$1$}, \tiny{$2$}, \tiny{$4$}, \tiny{$5$}, \tiny{$6$}}
]
\draw[{Rays[color = blueviolet]}-{Rays[color = blueviolet]}, steelblue31119180, thick](1.0, -3.0) -- (2.0, -7.0);
\draw[{Rays[n = 20, color = darkgreen, scale = 0.8]}-{Rays[n = 20, color = darkgreen, scale = 0.8]}, steelblue31119180, thick](1.0, 2.0) -- (2.0, 4.0);
\draw[{Rays[n = 20, color = darkgreen, scale = 0.8]}-{Rays[n = 20, color = darkgreen, scale = 0.8]}, densely dashed, orange, thick](1.0, 1.0) -- (2.0, 4.0);
\draw[{Rays[color = blueviolet]}-{Rays[color = blueviolet]}, densely dashed, orange, thick](1.0, 5.0) -- (2.0, 6.0);
\end{axis}


\end{tikzpicture}
				\caption{The Knothe--Rosenblatt rearrangement $\pi^\kr_{\mu, \nu}$ is shown on the left, and the coupling $\pi^\antitone_{\mu, \nu}$ on the right. The solid blue lines and dashed orange lines represent processes with law $\mu$ and $\nu$, respectively. At each time, the points with the same colour and style are coupled with each other.}
				\label{fig:counterexample}
			\end{figure}
			For the Knothe--Rosenblatt rearrangement $\pi^\kr_{\mu, \nu}$ we have
			\begin{equation}
				\begin{split}
					\E^{\pi^\kr_{\mu, \nu}}\!\left[|x_1 - \bar x_1| \vee |x_2 - \bar x_2|\right] & = 1/2 \cdot (|- 3 - 2| \vee |- 7 - 4|) + 1/2 \cdot (|1 - 5| \vee |4 - 6|)\\
					& = 1/2 \cdot (11 + 4) = 15/2.
				\end{split}
			\end{equation}
			On the other hand, consider the coupling $\pi^\antitone_{\mu, \nu}$ that is defined similarly to $\pi^\kr_{\mu, \nu}$ but with the first marginals coupled via the \emph{antitone} rearrangement rather than the monotone rearrangement. Then
			\begin{equation}
				\begin{split}
					\E^{\pi^\antitone_{\mu, \nu}}\!\left[|x_1 - \bar x_1| \vee |x_2 - \bar x_2|\right] & = 1/2 \cdot (|- 3 - 5| \vee |- 7 - 6|) + 1/2 \cdot (|1 - 2| \vee |4 - 4|)\\
					& = 1/2 \cdot (13 + 1) = 14/2 < 15/2.
				\end{split}
			\end{equation}
			Thus the Knothe--Rosenblatt rearrangement is not an optimiser.
			
			Note that both $\mu$ and $\nu$ are stochastically increasing. Therefore \Cref{prop:kr-discrete-optimality} implies that the Knothe--Rosenblatt rearrangement $\pi^\kr_{\mu, \nu}$ is an optimiser of
			\begin{equation}
				\inf_{\pi \in \cplba(\mu, \nu)}\E^\pi\!\left[|x_1 - \bar x_1| + |x_2 - \bar x_2|\right].
			\end{equation}
			In fact one can compute that both $\pi^\kr_{\mu, \nu}$ and $\pi^\antitone_{\mu, \nu}$ attain the same value for this problem and are therefore both optimisers.
		\end{example}

\appendix

\section{Convergence of the monotone scheme}	\label{app:convergence}

The aim here is to prove \Cref{lem:convergence-truncated} which states the $L^p$-convergence of the monotone Euler--Maruyama scheme (given in \Cref{def:monotone-em}) to the unique strong solution of \eqref{eq:sde} when the coefficients are Lipschitz. 
The proof of this result proceeds as follows: we first establish $L^2$-convergence by showing that the monotone Euler--Maruyama scheme is close in the $L^2$-norm to the standard Euler--Maruyama scheme; making use of a bound on the $p$\textsuperscript{th} moments of the monotone scheme, we then deduce $L^p$-convergence. 
Throughout the following proofs, we make use of generic constants, which may change from one line to the next.

\begin{remark}\label{rem:ito-strong-solution}
	Under the assumption that the coefficients $b, \sigma$ in \eqref{eq:sde} are Lipschitz, there exists a unique strong solution $X$ to \eqref{eq:sde} according to a classical result of It\^o; see, e.g.\ \cite[Proposition 1.9]{ChEn05}. Moreover, for $p > 0$, the process $X$ satisfies the following moment bounds. There exist constants $C_p, \tilde C_p > 0$ such that
	\begin{equation}\label{eq:lp-ito}
		\E \!\left[\sup_{0 \leq t \leq 1}|X_t|^p\right] \leq C_p,
	\end{equation}
	and, for any $s, t \in [0, 1]$ with $s < t$,
	\begin{equation}\label{eq:one-step-error-sde}
		\E \!\left[\sup_{s \leq u \leq t}|X_u - X_s|^p\right] \leq \tilde C_p (t - s)^{\frac p2}.
	\end{equation}
	These bounds follow from a standard application of Doob's martingale inequality, the Burkholder--Davis--Gundy inequality, and Gr\"onwall's lemma; see, e.g.~\cite[Lemma 3.8 and Equation~(3.48)]{GiSk79}.
\end{remark}

\begin{remark}\label{rem:truncated-bm-martingale}
	Note that the process $(W^h_t)_{t \in [0, 1]}$ is a martingale with respect to $\mathcal{F}^W$, the filtration generated by the Wiener process $W$ augmented to satisfy the usual conditions.
\end{remark}

We start with a lemma, which we adapt from \cite[Lemma 2.1]{MiReTr02}, that gives a bound on the fourth moment of the error created by the truncation. This bound is used in the proof of \Cref{prop:truncated-euler-close-to-euler} and thus justifies the choice of the truncation level $A_h$.

\begin{lemma}\label{lem:truncated-BM}
	For $N \in \N$, $h = 1/N$, and fixed $k \in \{0, \dotsc, N-1\}$,  we have the second moment bound
	\begin{equation}
		\E \!\left \lvert W_{{(k+1)h}} - W_{{kh}} - (W^h_{{(k+1)h}} - W^h_{{kh}}) \right \rvert^2 \le 2 h^{3}.
	\end{equation}
\end{lemma}

{\begin{proof}
	First note that
	\begin{equation}
		\begin{split}
			\E \!\left \lvert W_{{(k+1)h}} - W_{{kh}} - (W^h_{{(k+1)h}} - W^h_{{kh}}) \right \rvert^2 & = \E \!\left|W_{(k + 1)h} - W_{(k + 1)h \wedge \tau^h_k}\right|^2\\
			& = \E [h -  h \wedge \tau^h_0] \leq h \P [\tau^h_0 \leq h],
		\end{split}
	\end{equation}
	since $W$ has identically distributed increments, and calculate
	\begin{equation}
		\P[\tau^h_0 \le h] = 2\P\!\left[\sup_{t \in[0, h]}W_t \ge A_h \right] = 4 \P [W_h \geq A_h],
	\end{equation}
	using the reflection principle.
	Then
	\begin{equation}
		\begin{split}
			\E \!\left \lvert W_{{(k+1)h}} - W_{{kh}} - (W^h_{{(k+1)h}} - W^h_{{kh}}) \right \rvert^2 & \leq \frac{4 h}{\sqrt{2 \pi h}}\int_0^\infty e^{- \frac{(x + A_h)^2}{2h}}\D x < 2h e^{-\frac{A_h^2}{2h}}.
		\end{split}
	\end{equation}
	Recalling the definition $A_h = 2 \sqrt{-h \log h}$, we conclude.
\end{proof}
}

\begin{remark}\label{rem:moment-bound}
	From the above proof, we see that, for an arbitrary $K \in \N$, we can redefine $A_h\coloneqq  K \sqrt{-h\log h}$ and achieve a second moment bound of $2 h^{1 + \frac{K^2}{2}}$ in \Cref{lem:truncated-BM}.
\end{remark}

In order to prove the $L^2$-convergence of the monotone Euler--Maruyama scheme $X^h$ to the unique strong solution $X$ of \eqref{eq:sde}, we first recall the following estimates for the standard Euler--Maruyama scheme $\tilde{X}^h$ (defined in \eqref{eq:em}) when the coefficients are Lipschitz.
From the proof of \cite[Theorem~10.2.2]{KlPl92}, for example, there exists a constant $\tilde{C}_0$ such that, for any $h > 0$, we have the $L^2$ estimate 
\begin{equation}\label{eq:euler-convergence}
	\E \!\left[ \sup_{0 \leq s \leq 1} \lvert \tilde{X}^h_s - X_s \rvert^2 \right] \leq \tilde{C}_0 h.
\end{equation}
Similarly to \Cref{rem:ito-strong-solution}, one can also derive the following moment bound. For any $p \geq 1$, there exists a constant $\tilde{C}_p$ such that, for any $h > 0$,
\begin{equation}\label{eq:euler-l2-bound}
	\E \!\left[ \sup_{0 \leq s \leq 1} \lvert \tilde{X}^h_s\rvert^p\right] \leq \tilde{C}_p.
\end{equation}

\begin{proposition}\label{prop:truncated-euler-close-to-euler}
	Suppose that the coefficients $b$ and $\sigma$ in \eqref{eq:sde} are Lipschitz. Then there exists a constant $C > 0$ such that, for any $N \in \N$ and $h = 1/N$,
	\begin{equation}
		\E \!\left[\sup_{0 \leq s \leq 1} \lvert \tilde{X}^h_s - X^h_s\rvert^2\right] \leq C h.
	\end{equation}
\end{proposition}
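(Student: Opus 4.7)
Couple $\tilde X^h$ and $X^h$ on the same probability space by driving both with the common Wiener process $W$ used to construct $W^h$, and set $e(s) := \tilde X^h_s - X^h_s$. The strategy is to control $\E \sup_{s \leq 1} e(s)^2$ via Gronwall's inequality, after isolating the additional error introduced by the truncation of the Brownian increments.

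With $\eta(u) := h \lfloor u/h \rfloor$, both schemes admit Ito representations with integrands frozen at $\eta(u)$. Since $W^h_u - W^h_{\eta(u)} = W_{u \wedge \tau^h_{\lfloor u/h\rfloor}} - W_{\eta(u)} = \int_{\eta(u)}^u \ind{r \leq \tau^h_{\lfloor r/h \rfloor}}\,dW_r$, subtracting the two schemes yields the decomposition
\begin{equation}
e(s) = \int_0^s \bigl[b(\tilde X^h_{\eta(u)}) - b(X^h_{\eta(u)})\bigr]\,du + \int_0^s \bigl[\sigma(\tilde X^h_{\eta(u)}) - \sigma(X^h_{\eta(u)})\bigr]\,dW_u + M_s,
\end{equation}
where $M_s := \int_0^s \sigma(X^h_{\eta(u)}) \ind{u > \tau^h_{\lfloor u/h\rfloor}}\,dW_u$ is a continuous $\F^W$-martingale (the integrand is $\F^W$-progressive because $\{u > \tau^h_k\} \in \F^W_u$) that captures the truncation error exactly. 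The first two integrals admit standard Lipschitz-type estimates; the crux is to bound $M$.

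A standard BDG--Gronwall argument using the linear growth of $b, \sigma$ and $\E[(\Delta W^h_k)^2] \leq h$ gives uniform moment bounds $\sup_h \E \sup_{s \leq 1} |X^h_s|^p < \infty$ for every $p \geq 1$. Doob's $L^2$-inequality, Ito isometry, and Cauchy-Schwarz then yield
\begin{equation}
\E \sup_{s \leq 1} M_s^2 \leq 4 \int_0^1 \bigl(\E\sigma(X^h_{\eta(u)})^4\bigr)^{1/2}\,\P\bigl[\tau^h_{\lfloor u/h\rfloor} < u\bigr]^{1/2}\,du,
\end{equation}
and the reflection-principle tail bound $\P[\tau^h_0 < h] \leq C h^8$ --- which is implicit in the proof of \Cref{lem:truncated-BM}, owing to the choice $A_h = 4\sqrt{-h\log h}$ --- makes this $O(h^4)$, comfortably below the required $O(h^3)$.

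Finally, Cauchy-Schwarz on the drift integral, Doob plus Ito isometry on the diffusion integral, and the Lipschitz property of $b, \sigma$ produce an inequality of the form
\begin{equation}
\E \sup_{r \leq t} e(r)^2 \leq C \int_0^t \E \sup_{r \leq u} e(r)^2\,du + C h^3, \quad t \in [0, 1],
\end{equation}
and Gronwall's lemma closes the proof. The main technical point is the careful identification of $M$ as a continuous $\F^W$-martingale together with the quantitative control via \Cref{lem:truncated-BM}; the uniform moment estimate on the monotone scheme is the other non-trivial ingredient, since $W^h$ itself is not a Brownian motion.
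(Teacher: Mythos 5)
Your proof is correct and follows essentially the same strategy as the paper's: decompose $\tilde X^h - X^h$ into a drift-difference term, a diffusion-difference term, and a term capturing the discrepancy between $W$ and $W^h$, bound the first two by Lipschitz estimates feeding into a Gr\"onwall argument, and control the truncation term quantitatively via the choice of $A_h$. The only (harmless) variations are that you attach the noise-discrepancy term to $\sigma(X^h)$ rather than $\sigma(\tilde X^h)$, estimate it through the tail bound $\P[\tau^h_0\le h]\le Ch^8$ implicit in the proof of \Cref{lem:truncated-BM} rather than through the fourth-moment bound itself, and consequently use moments of the monotone scheme (as in \Cref{lem:truncated-scheme-l2-bound}, whose proof is independent of this proposition, so no circularity arises) instead of the standard Euler bound --- which in fact yields the even stronger rate $O(h^4)$.
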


\begin{proof}
	Fix $N \in \N$ and $h = 1/N$. For $s \in [0, 1]$, introduce the notation $t_{n_s}\coloneqq  \sup\{t \leq s \colon t = kh, \; \text{for some}\; k = 0, \dotsc, N\}$ and, for $t \in [0, 1]$, define the remainder terms
	\begin{equation}
	\begin{split}
		R_t &\coloneqq  \E\!\left[\sup_{0 \leq s \leq t} \!\left \lvert \int_{0}^s \!\left(b(\tilde{X}^h_{t_{n_r}}) - b(X^h_{t_{n_r}})\right) \D r \right \rvert^2\right], \quad S_t\coloneqq  \E \!\left[\sup_{0 \leq s \leq t} \!\left \lvert \int_{0}^s \!\left(\sigma(\tilde{X}^h_{t_{n_r}}) - \sigma(X^h_{t_{n_r}})\right)\D W^h_r \right \rvert^2\right],\\
		U_t &\coloneqq  \E \!\left[\sup_{0 \leq s \leq t} \!\left \lvert \int_{0}^s \sigma(\tilde{X}^h_{t_{n_r}})\D W_r - \int_{0}^s\sigma(\tilde{X}^h_{t_{n_r}})\D W^h_r \right \rvert^2\right],
	\end{split}
	\end{equation}
	so that
	\begin{equation}
		Z_t\coloneqq  \E\!\left[\sup_{0 \leq s \leq t} \lvert \tilde{X}^h_s - X^h_s \rvert^2 \right] \leq C(R_t + S_t + U_t).
	\end{equation}	
	
	Fix $t \in [0, 1]$. By Jensen's inequality, we can bound
	\begin{equation}
		R_t \leq \int_0^t \E \!\left[ \sup_{0 \leq s \leq u} \!\left\lvert b(\tilde{X}^h_{t_{n_s}}) - b(X^h_{t_{n_s}}) \right\rvert\!^2\right] \D u.
	\end{equation}
	Then, using the Lipschitz property of $b$ and expanding the set of times over which we take the supremum, we can find a constant $C^R$ such that
	\begin{equation}
		R_t \leq C^R \int_0^t \E\!\left[\sup_{0 \leq s \leq u} \lvert \tilde{X}^h_s - X^h_s  \rvert^2 \right]\D u = C^R \int_0^t Z_u \D u.
	\end{equation}
	
	As noted in \Cref{rem:truncated-bm-martingale}, $W^h$ is an $\F^W$-martingale, and we see that $\D \langle W^h\rangle_t \leq \D t$. Thus, by Doob's martingale inequality,
	\begin{equation}
	\begin{split}
		S_t & \leq 4 \int_0^t \E \!\left[ \sup_{0 \leq s \leq u} \lvert \sigma(\tilde{X}^h_{t_{n_s}}) - \sigma(X^h_{t_{n_s}}) \rvert^2\right] \D u.
	\end{split}
	\end{equation}
	In the same way as for $R_t$, we now use the Lipschitz property of $\sigma$ to find a constant $C^S$ such that $S_t \leq C^S \int_0^t Z_u \D u$.
	
	Finally, we bound the term $U_t$. For each $k = 0, \dotsc, N - 1$, let us write $\Delta_{k+1} W = W_{(k+1)h} - W_{kh}$ and $\Delta_{k+1} W^h = W^h_{(k+1)h} - W^h_{kh}$. Then, applying Doob's inequality and the independence of increments, we get
	\begin{equation}
		\begin{split}
			U_t & \leq 4 \E \!\left[\bigg\lvert \sum_{k = 0}^{n_t - 1} \sigma(\tilde{X}^h_{{kh}}) \!\left[\Delta_{k+1} W - \Delta_{k+1} W^h \right] + \sigma(\tilde{X}^h_{t_{n_t}}) \!\left[W_t - W_{t_{n_t}} - (W^h_t - W^h_{t_{n_t}})\right]\bigg \rvert^2 \right]\\
			& \leq 4N\sum_{k = 0}^{N-1} \E \!\left[\sigma(\tilde{X}^h_{kh})^2(\Delta_{k+1} W - \Delta_{k+1} W^h)^2\right] = 4 N \sum_{k = 0}^{N - 1}\E \!\left[\sigma(\tilde{X}^h_{kh})^2\right]\E \!\left[(\Delta_{k+1} W - \Delta_{k+1} W^h)^2\right].
		\end{split}
	\end{equation}
	Applying \Cref{lem:truncated-BM}, we can bound the term $\E[(\Delta W_{k+1} - \Delta W^h_{k+1})^2] \le 2h^{3}$, for each $k = 0, \dotsc, N - 1$. Using the Lipschitz property of $\sigma$ and the $L^p$ bound \eqref{eq:euler-l2-bound} for the Euler--Maruyama scheme, we can also bound $\E[\sigma(\tilde{X}^h_{kh})^2] \leq C$, for each $k = 0, \dotsc, N - 1$. Therefore we have
	\begin{equation}
		U_t \leq \bar{C} N^2 h^3 = \bar{C} h.
	\end{equation}
	
	Combining the bounds on $R_t$, $S_t$ and $U_t$, and defining $C = C^R + C^S$, we can bound $Z_t$ by
	\begin{equation}
		Z_t \leq \bar{C}h + C \int_0^t Z_u \D u,
	\end{equation}
	and by Gr\"onwall's inequality we conclude that $Z_t \leq \tilde{C} h$, for some constant $\tilde C >0$.
\end{proof}

\begin{remark}
	Similarly to \Cref{rem:moment-bound}, the power in the bound in \Cref{prop:truncated-euler-close-to-euler} can be made arbitrarily large, by multiplying the truncation level $A_h$ by a sufficiently large constant.
\end{remark}

The following immediate corollary now gives a rate for the $L^2$-convergence of the monotone Euler--Maruyama scheme $X^h$ to the solution $X$ of the SDE \eqref{eq:sde}.

\begin{corollary}\label{cor:l2-convergence}
	Suppose that the coefficients $b$ and $\sigma$ in \eqref{eq:sde} are Lipschitz. Then there exists a constant $C > 0$ such that, for any $h > 0$ sufficiently small,
	\begin{equation}
		\E \!\left[ \sup_{0 \leq s \leq 1} \lvert X^h_s - X_s \rvert^2 \right] \leq C h.
	\end{equation}
\end{corollary}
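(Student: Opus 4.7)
The plan is to derive the corollary as a straightforward consequence of the triangle inequality, combining Proposition \ref{prop:truncated-euler-close-to-euler} with the classical $L^2$-convergence estimate \eqref{eq:euler-convergence} for the standard Euler--Maruyama scheme. No new analytical work should be needed.

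Concretely, I would first write, for each $s \in [0,1]$,
\begin{equation}
	|X^h_s - X_s|^2 \le 2|X^h_s - \tilde X^h_s|^2 + 2|\tilde X^h_s - X_s|^2,
\end{equation}
where $\tilde X^h$ is the standard Euler--Maruyama scheme defined in \eqref{eq:em}. Taking the supremum over $s \in [0,1]$ on both sides and then expectations, I obtain
\begin{equation}
	\E\Bigl[\sup_{0 \le s \le 1} |X^h_s - X_s|^2\Bigr] \le 2\,\E\Bigl[\sup_{0 \le s \le 1}|X^h_s - \tilde X^h_s|^2\Bigr] + 2\,\E\Bigl[\sup_{0 \le s \le 1}|\tilde X^h_s - X_s|^2\Bigr].
\end{equation}
Proposition \ref{prop:truncated-euler-close-to-euler} bounds the first term by $2Ch^3$, and the classical estimate \eqref{eq:euler-convergence} bounds the second term by $2\tilde C_0 h$. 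For $h \le 1$ the first term is dominated by the second, so the sum is bounded by $(2C + 2\tilde C_0)h$, yielding the claim with the constant $2C + 2\tilde C_0$.

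There is no real obstacle here since both ingredients are already available: Proposition \ref{prop:truncated-euler-close-to-euler} is the substantive step, and \eqref{eq:euler-convergence} is quoted from \cite{KlPl92}. The only mild care needed is to ensure that $h$ is small enough so that Proposition \ref{prop:truncated-euler-close-to-euler} applies (which requires $N \in \N$ with $h=1/N$) and to absorb the $h^3$ term into the $h$ term, both of which are immediate.
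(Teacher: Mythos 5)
Your proposal is correct and matches the paper's own proof exactly: the paper also concludes via the triangle inequality, bounding $\E[\sup_{0\le s\le 1}|X_s - X^h_s|^2]$ by twice the classical Euler--Maruyama estimate \eqref{eq:euler-convergence} plus twice the bound from \Cref{prop:truncated-euler-close-to-euler}, then absorbing the $h^3$ term into $Ch$. Nothing further is needed.
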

\begin{proof}
	Combining the rate of $L^2$ convergence of the Euler--Maruyama scheme given in \eqref{eq:euler-convergence} with the estimate of the $L^2$-error between the Euler--Maruyama scheme and the monotone Euler--Maruyama scheme given in \Cref{prop:truncated-euler-close-to-euler}, we can conclude via a simple application of the triangle inequality that
	\begin{equation}
		\E \!\left[\sup_{0 \leq s \leq 1} \lvert X_s - X^h_s\rvert^2\right] \leq 2 \E \!\left[ \sup_{0 \leq s \leq 1} \lvert \tilde{X}^h_s - X_s \rvert^2 \right] + 2 \E \!\left[\sup_{0 \leq s \leq 1} \lvert \tilde{X}^h_s - X^h_s\rvert^2\right] \leq C h.
	\end{equation}
\end{proof}

In order to obtain $L^p$-convergence, we make use of the following bounds on the $p$\textsuperscript{th} moments of the monotone Euler--Maruyama scheme $X^h$, for $h > 0$.

\begin{lemma}\label{lem:truncated-scheme-l2-bound}
	Suppose that the coefficients $b$ and $\sigma$ in \eqref{eq:sde} are Lipschitz. Then, for $p\ge 1$, there exists a constant $C_p > 0$, depending only on the initial condition $x_0$ and the Lipschitz constants of the coefficients $b$ and $\sigma$, such that for any $h>0$,
	\begin{equation}
		\E \!\left[\sup_{0 \leq t \leq 1}\!\left\lvert X^h_t \right\rvert\!^p\right] \leq C_p.
	\end{equation}
\end{lemma}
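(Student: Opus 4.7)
The plan is to treat the monotone Euler--Maruyama scheme as an It\^o-type integral driven by the continuous martingale $W^h$ and then invoke the standard moment estimates for SDEs with linear-growth coefficients.

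\medskip

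\emph{Step 1: Integral representation.} Writing $t_{n_s} := \sup\{kh : k \in \{0,\dots,N\},\; kh \leq s\}$ as in the proof of \Cref{prop:truncated-euler-close-to-euler}, the recursive definition of $X^h$ can be telescoped into
\begin{equation}
X^h_t = x_0 + \int_0^t b(X^h_{t_{n_s}})\,\D s + \int_0^t \sigma(X^h_{t_{n_s}})\,\D W^h_s, \qquad t \in [0,1].
\end{equation}

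\emph{Step 2: Properties of $W^h$.} By optional stopping applied to each Brownian increment $(W_{\cdot} - W_{kh})$ at the stopping time $\tau^h_k$, the piecewise-stopped process $W^h$ is a continuous martingale in the Brownian filtration. Moreover, $\D\langle W^h\rangle_s = \mathbbm{1}_{\{s \leq \tau^h_{k(s)}\}} \D s \leq \D s$, so the stochastic integral above is a well-defined continuous martingale whose quadratic variation is dominated by $\int_0^{\cdot} \sigma(X^h_{t_{n_s}})^2 \D s$.

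\medskip

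\emph{Step 3: Moment estimate for $p \geq 2$.} Fix $t \in [0,1]$, write $Z_t := \E\bigl[\sup_{s \leq t}|X^h_s|^p\bigr]$, and apply the elementary inequality $|a+b+c|^p \leq 3^{p-1}(|a|^p + |b|^p + |c|^p)$. Jensen's inequality bounds the drift contribution by $C\int_0^t \E|b(X^h_{t_{n_s}})|^p \D s$, while the Burkholder--Davis--Gundy inequality together with the bound $\D\langle W^h\rangle_s \leq \D s$ controls the martingale term by
\begin{equation}
C_p\, \E\!\left[\left(\int_0^t \sigma(X^h_{t_{n_s}})^2 \D s\right)^{p/2}\right] \leq C_p\, t^{p/2 - 1}\int_0^t \E|\sigma(X^h_{t_{n_s}})|^p \D s,
\end{equation}
again by Jensen. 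The Lipschitz hypothesis yields the linear growth bounds $|b(x)|^p + |\sigma(x)|^p \leq K_p(1 + |x|^p)$ with $K_p$ depending only on $x_0$ and the Lipschitz constants of $b,\sigma$. Putting this together,
\begin{equation}
Z_t \leq \bar{C}_p\left(|x_0|^p + 1 + \int_0^t Z_s\, \D s\right),
\end{equation}
and Grönwall's lemma closes the bound by a constant $C_p$ independent of $h$.

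\medskip

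\emph{Step 4: Reduction for $1 < p < 2$.} The case $p \in (1,2)$ follows from the $p = 2$ case by Jensen's inequality: $\E[\sup_s |X^h_s|^p] \leq \E[\sup_s |X^h_s|^2]^{p/2} \leq C_2^{p/2}$.

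\medskip

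I do not anticipate any genuine obstacle, the content being a routine moment estimate for an SDE-like equation. The one subtle point is that the driver $W^h$ is not a Brownian motion; however, it \emph{is} a continuous martingale with $\D\langle W^h\rangle \leq \D t$, which is precisely the structure needed for BDG to apply and for the argument to proceed identically to the classical Lipschitz SDE moment bound.
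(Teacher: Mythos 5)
Your proposal is correct and follows exactly the route the paper intends: its proof of this lemma is a one-line appeal to ``a standard application of martingale inequalities, Jensen's inequality and Gr\"onwall's lemma'' as in \Cref{rem:ito-strong-solution}, and your write-up simply makes that argument explicit, using the key facts (also used in \Cref{prop:truncated-euler-close-to-euler}) that $W^h$ is a continuous martingale with $\D\langle W^h\rangle_t \leq \D t$. No substantive difference from the paper's approach.
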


\begin{proof}
	Follows from a standard application of martingale inequalities and Gr\"onwall's lemma, similarly to \Cref{rem:ito-strong-solution}.
\end{proof}

Convergence in $L^p$ now follows immediately.

\begin{proof}[Proof of \Cref{lem:convergence-truncated}]
	By \Cref{cor:l2-convergence}, $X^h$ converges to $X$ in $L^2$, and hence in $L^q$ for all $q \in [1, 2]$. For fixed $p \geq 2$, \Cref{lem:truncated-scheme-l2-bound} gives a bound on the $(p+1)$\textsuperscript{th} moment of $X^h$. Moreover, the $(p+1)$\textsuperscript{th} moment of $X$ is bounded by \eqref{eq:lp-ito}. Combining the $L^2$-convergence with the bounds in $L^{p+1}$ implies $L^p$-convergence, as required.

To prove $L^p$-convergence of the discrete-time scheme, note that
\begin{equation}
	\E \!\left[\sum_{k = 0}^{N - 1} \int_{kh}^{(k+1)h}\lvert X_{kh} - X^h_{kh} \rvert^p \D t\right] \leq T \E \!\left[\sup_{0 \leq t \leq 1}|X_t - X^h_t|^q\right] \xrightarrow{h \to 0} 0,
\end{equation}
and \eqref{eq:one-step-error-sde} provides the estimate
\begin{equation}
	\E \!\left[\sum_{k = 0}^{N - 1} \int_{kh}^{(k+1)h}\lvert X_t - X_{kh} \rvert^p \D t\right] \leq h\sum_{k = 0}^{N - 1}\E \!\left[\sup_{kh \leq t \leq (k + 1)h}|X_t - X_{kh}|^p\right] \leq \tilde C_p T h^{\frac p2}.
\end{equation}
We conclude by the triangle inequality.
\end{proof}

\section{A stability result for SDEs}

We here establish a stability result for the parameter dependence of path-dependent SDEs driven by correlated Wiener processes. The result can also be obtained from \cite[Theorem~3.24]{JaMe81}, for example, but for completeness we provide a direct proof.

In order to formulate the result, recall the notation $\Omega=C([0,1],\R)$ and $\|\omega\|_\infty \coloneqq  \sup_{s\in[0,1]}|\omega_s|$, $\omega\in \Omega$, for the sup-norm. As before, $\Omega$ is equipped with the canonical filtration and the uniform topology. 
We also equip $\Omega \times \Omega$ etc.\ with the product filtration and product topology. Moreover, for $p \geq 1$, and $\pi,\pi^\prime\in \Pc(\Omega \times \Omega)$ with finite $p$\textsuperscript{th} moment, we here define the $p$-Wasserstein distance (with respect to the sup-norm) between $\pi$ and $\pi^\prime$ to be 
\begin{equation}\label{eq:wass-stab}
	\inf_{\alpha\in \cpl(\pi,\pi^\prime)}\E^\alpha \!\left[\|\omega - \omega^\prime\|_\infty^p + \|\bar\omega - \bar \omega^\prime\|_\infty^p\right],
\end{equation}
where $\cpl(\pi,\pi^\prime)$ denotes the set of probability measures on $(\Omega \times \Omega)\times( \Omega \times \Omega)$ with marginal distribution onto the first (resp. last) two coordinates given by $\pi$ (resp. $\pi^\prime$) and $((\omega,\bar\omega),(\omega^\prime,\bar\omega^\prime))$ denotes the canonical process.

\begin{remark}\label{rem:wass-p}
	For any $p \geq 1$, $\pi_n$ converges to $\pi$ with respect to the $p$-Wasserstein distance on $\Pc(\Omega \times \Omega)$, as defined in \eqref{eq:wass-stab}, if and only if, for any continuous function $\phi\colon \Omega \times \Omega \to \R$ with at most polynomial growth of order $p$ --- i.e.\ $|\phi(\omega, \bar\omega)| \leq C (1 + \|\omega\|_\infty^p + \|\bar\omega\|_\infty^p)$, $(\omega, \bar\omega)\in \Omega \times \Omega$ --- it holds that $\E^{\pi_n}[\phi(\omega, \bar\omega)] \to \E^\pi[\phi(\omega, \bar\omega)]$ (see, e.g.~\cite[Theorem 7.12]{Vi03}).
\end{remark}

\begin{proposition}\label{prop:stab_abstract}
Let $(W, \bar W)$ be a $\rho$-correlated Wiener process, for some progressively measurable process $\rho$, as defined in \Cref{def:correlated}. 
Suppose that $(x_0, \bar x_0, b, \bar{b}, \sigma, \bar{\sigma})$ satisfies \Cref{ass:stability}, and write $(X, \bar X)$ for the unique strong solution of \eqref{eq:sde-path-dep} driven by $(W, \bar W)$.

For $n \in \N$, consider also $(x^n_0, \bar x^n_0, b^n,\bar b^n,\sigma^n,\bar\sigma^n)$ satisfying \Cref{ass:stability}.(i) and (iii), with a uniform slope constant $K$ in \eqref{eq:linear-growth}, and such that strong existence holds for \eqref{eq:sde-path-dep}; let $(X^{b^n, \sigma^n},\bar{X}^{\bar{b}^n, \bar{\sigma}^n})$ be one such strong solution, when \eqref{eq:sde-path-dep} is driven by $(W, \bar W)$.

Suppose also that, as $n \to \infty$, $(x^n_0,\bar x^n_0)\to(x_0,\bar x_0)$ and the following convergence holds:
\begin{align}\label{eq:unifconvassump_app}
\|\omega^n - \omega \|_\infty \to 0 \implies (b^n,\bar b^n,\sigma^n,\bar\sigma^n)(t,\omega^n)\to (b,\bar b,\sigma,\bar\sigma)(t,\omega), \text{ for each }t \in [0, 1].
\end{align}

Then, for any $p \geq 1$,
\begin{equation}
	\Law(X^{b^n, \sigma^n},\bar{X}^{\bar{b}^n, \bar{\sigma}^n}) \xrightarrow{n \to \infty} \Law(X,\bar X),
\end{equation}
in the $p$-Wasserstein distance (with respect to the sup-norm) on $\Pc(\Omega \times \Omega)$.
\end{proposition}

\begin{proof}
Similarly to \Cref{rem:ito-strong-solution}, standard SDE estimates based on the BDG inequality, Jensen's inequality, and Gr\"onwall's lemma show the existence of $K_p<\infty$ such that $\E[\|X^{b^n, \sigma^n}\|_\infty^p]\leq K_p(1+|x^n_0|^p) $, with similar bounds for $\bar{X}^{\bar{b}^n, \bar{\sigma}^n}$. As $(x^n_0,\bar x^n_0)_{n \in \N}$ converges, this shows that, for all $p \ge 1$, the $p$\textsuperscript{th} moments of $\|X^{b^n, \sigma^n}\|_\infty$ and $\|\bar{X}^{\bar{b}^n, \bar{\sigma}^n}\|_\infty$ are bounded uniformly in $n\in \N$. On the one hand, this implies that $(\Law(X^{b^n, \sigma^n},\bar{X}^{\bar{b}^n, \bar{\sigma}^n}))_{n \in \N}$ is tight, and on the other hand that it suffices to prove that $\Law(X^{b^n, \sigma^n},\bar{X}^{\bar{b}^n, \bar{\sigma}^n}) \to \Law(X,\bar X)$ weakly on $\Pc(\Omega \times \Omega)$. Thanks to \Cref{ass:stability}.(iv), this can be achieved if we prove that each weak accumulation point of $(\Law(X^{b^n, \sigma^n},\bar{X}^{\bar{b}^n, \bar{\sigma}^n}))_{n \in \N}$ solves the martingale problem associated to the system for $(X,\bar X)$, since the latter is then well posed.

Let $\eta \in \Pc(\Omega \times \Omega)$ be one such weak accumulation point. Then, after possibly passing to a subsequence, Skorokhod's representation theorem ensures the existence of stochastic processes $(X^n,\bar X^n,W^n,\bar W^n)_{n \in \N}$ and $(Y, \bar Y, W^\infty, \bar W^\infty)$ defined on a single probability space $(\tilde{\Omega}, \tilde{\mathcal F}, \tilde{\mathbb P})$ such that
	\begin{equation}
		(\Law (X^n,\bar X^n,W^n,\bar W^n))_{n \in \N} = (\Law(X^{b^n, \sigma^n}, \bar{X}^{\bar{b}^n, \bar{\sigma}^n}, W, \bar W))_{n \in \N},	
	\end{equation}
	$(X^n,\bar X^n,W^n,\bar W^n)\to(Y,\bar Y,W^{\infty},\bar W^\infty)$ pointwise, and $\Law(Y, \bar Y) = \eta$. Moreover, for each $n \in \N$, there exist deterministic maps $F^n, \bar{F}^n$ such that $X^{b^n, \sigma^n} = F^n(W)$ and $\bar{X}^{\bar{b}^n, \bar{\sigma}^n} = \bar{F}^n(\bar{W})$, and so $X^n = F^n(W^n)$ and $\bar{X}^n = \bar{F}^n(\bar{W}^n)$. Therefore $(X^n, \bar{X}^n)$ is a strong solution of the system \eqref{eq:sde-path-dep}, with coefficients $b^n, \bar b^n, \sigma^n, \bar \sigma^n$, driven by $(W^n, \bar W^n)$.
By the equality in law, we can also verify that $(W^n,\bar W^n)$ is a $\rho$-correlated Wiener process in its own filtration.

Let $\varepsilon > 0$. By Lusin's theorem applied to the measurable function $\rho$, we can find a closed set $E\subset [0,1]\times \Omega \times \Omega$ with $m_\varepsilon \coloneqq (\D t\times \mathbb P)(\{(t,\omega)\colon (t,W^n(\omega),\bar W^n(\omega))\notin E\})\leq \varepsilon$ and $\rho|_E$ continuous. We remark that $m_\varepsilon$ is independent of $n\in \mathbb N\cup\{\infty\}$ as it only depends on the joint law of $(W^n,\bar W^n)$. By Tietze's theorem there exists a continuous function $\rho^\varepsilon\colon [0,1]\times \Omega \times \Omega\to[-1,1]$, which coincides with $\rho$ on $E$.
For $n \in \N$, the martingale problem associated with the system for $(X^n,\bar X^n)$ reads as follows: for every bounded $f\colon [0,1]\times \R^2\to\R$ which is differentiable in time, twice differentiable in space, and with corresponding bounded and continuous derivatives, it holds that $R_f^n=0$, where
\begin{align}
&R_f^n\coloneqq \mathbb E\Bigl[ f(T,X^n_1,\bar X^n_1) - f(0,x^n_0,\bar x^n_0)-  \\ & \int_0^1\{\partial_t f + b^n\partial_xf+ \bar b^n\partial_{\bar x}f+\frac{1}{2}(\sigma^n)^2\partial_{xx}f +\frac{1}{2}(\bar \sigma^n)^2\partial_{\bar x\bar x}f  +\rho(t,W^n,\bar W^n)\sigma^n \bar \sigma^n  \partial_{  x\bar x} f \}(t,X^n,\bar X^n)\D t  \Bigr ],
\end{align}
and we identify $f(t, Z, \bar Z) \equiv f(t, Z_t, \bar Z_t)$, for any processes $Z, \bar Z$. On the other hand, we may also define 
\begin{align}
R_f^\infty &\coloneqq \mathbb E\Bigl[ f(T,Y_1,\bar Y_1) - f(0,x_0,\bar x_0)-  \\ &  \int_0^1\{\partial_t f + b\partial_xf+ \bar b\partial_{\bar x}f+\frac{1}{2}\sigma^2\partial_{xx}f +\frac{1}{2}\bar \sigma^2\partial_{\bar x\bar x}f  +\rho(t,W^\infty,\bar W^\infty)\sigma \bar \sigma  \partial_{  x\bar x} f \}(t,Y,\bar Y)\D t  \Bigr ],
\end{align}
and so our goal is to show that  $R_f^\infty=0$ for all $f$ in the aforementioned class of functions. To this end, for $n \in \N$, we introduce $R_f^{n,\varepsilon}$ and $R_f^{\infty,\varepsilon}$, defined analogously to $R_f^n$ and $R_f^\infty$  with $\rho$ replaced by\footnote{Crucially, we do not claim that the martingale problem with initial condition $(x^n_0, \bar x^n_0)$ and coefficients $(b^n,\bar b^n,\sigma^n,\bar\sigma^n)$, driven by a $\rho^\varepsilon$-correlated Brownian motion, is well-posed. Instead we view the introduction of $\rho^\varepsilon,R_f^{n,\varepsilon},R_f^{\infty,\varepsilon}$, as a technical means toward the goal of showing that $R_f^\infty=0$.} $\rho^\varepsilon$, but otherwise unchanged. We claim that $\lim_{n\to\infty} R_f^{n,\varepsilon} =  R_f^{\infty,\varepsilon}$. To see this, note first that since $\rho^\varepsilon$ as well as $f$ and its partial derivatives are continuous, and since $b^n,\bar b^n,\sigma^n,\bar\sigma^n$ converge in the sense of \eqref{eq:unifconvassump_app}, the integrand converges $\D t\times \mathbb P$-almost surely. In turn, since $\rho^\varepsilon$ as well as $f$ and its partial derivatives are bounded, and since $b^n,\bar b^n,\sigma^n,\bar\sigma^n$ have uniform linear growth, for $n \in \N$, we can leverage the uniform moment estimates given at the start of the proof to apply dominated convergence and conclude the desired claim. 
Next, note that $|R_f^{n} - R_f^{n,\varepsilon}|\leq C\varepsilon$, for all $n\in\mathbb N\cup\{\infty\}$, with a constant $C$ depending on $f$ but, crucially, independent of $n$ and $\varepsilon$; this follows again by the uniform linear growth assumption and the uniform moment estimates, which extend to $Y$ and $\bar Y$. Thus $$|R^\infty_f|\leq |R^\infty_f -R_f^{\infty,\varepsilon} | + \lim_{n \to \infty}|R_f^{\infty,\varepsilon} - R_f^{n,\varepsilon} | + \lim_{n \to \infty}|R_f^{n,\varepsilon} - R_f^n | \leq 2 C \varepsilon,$$ and we can conclude by uniqueness of solutions to the martingale problem associated to the system for $(X,\bar X)$.
\end{proof}

\begin{remark}
    An anonymous referee has pointed out the following alternative argument for the proof of Proposition \ref{prop:stab_abstract}, which circumvents the Lusin/Tietze step: The proof starts the same (so we employ the same notation), up to and including the application of Skorokhod representation. Then one goes on to prove that $(Y,W^\infty)$ solves the same martingale problem as $(X,W)$. Similarly, $(\bar Y,\bar W^\infty)$ solves the same martingale problem as $(\bar X,\bar W)$. By assumption, we have $ Y=F( W^\infty)$ for some measurable function $F$, and the laws of $(Y,W^\infty)$ and $(X,W)$ coincide. Similarly,   $ \bar Y=\bar F( \bar W^\infty)$ and the laws of $(\bar Y,\bar W^\infty)$ and $(\bar X,\bar W)$ coincide. Since also the laws of $(W,\bar W)$ and $(W^\infty,\bar W^ \infty)$ coincide, we conclude the equality in law of $(X,\bar X)$ and $(Y,\bar Y)$.
\end{remark}

\bibliographystyle{abbrvnat}
\bibliography{joint_biblio.bib}

\end{document}